\DeclareSymbolFontAlphabet{\Bbb}{AMSb}
\definecolor{darkgreen}{rgb}{0,0.6,0}
\newcommand{\done}[1]{{\color{black}{#1}}}
\newcommand{\secondd}{\color{blue}}
\newcommand{\donee}{\color{black}}
\newlength{\myleftmargin}
\DeclareSymbolFontAlphabet{\Bbb}{AMSb}
\newtheorem{theorem}{Theorem} 
\newtheorem{lemma}[theorem]{Lemma}
\newtheorem{proposition}[theorem]{Proposition}
\newtheorem{corollary}[theorem]{Corollary}
\newtheorem{definition}[theorem]{Definition}
\newtheorem*{question}{Question}
\newtheorem{example}[theorem]{Example}
\newcommand{\atob}[2]{\emph{#1)} $\Rightarrow$ \emph{#2)}.} 
\newcommand{\ada}[1]{\emph{#1)\,}} 
\newenvironment{proofof}[1]{\noindent{\bf Proof of #1:}}{\qed\medskip}
\newlength{\fixboxwidth}
\newcommand{\ca}[1]{{\cal #1}}
\newcommand{\qqqquad}{\qquad\qquad}
\newcommand{\mycdot}{\,\cdot\,}
\newcommand{\Z}{\Bbb{Z}}  
\newcommand{\N}{\mathbb{N}}
\newcommand{\R}{\mathbb{R}}
\newcommand{\Rd}{\mathbb{R}^d}
\newcommand{\E}{\mathbb{E}}
\newcommand{\dx}[1]{\hspace*{0.25ex}d\hspace*{-0.15ex}#1}
\renewcommand{\a}{\alpha}
\renewcommand{\b}{\beta}
\newcommand{\g}{\gamma}
\renewcommand{\d}{\delta}
\newcommand{\e}{\varepsilon}
\newcommand{\lb}{\lambda}
\newcommand{\s}{\sigma}
\newcommand{\Om}{\Omega}
\DeclareMathOperator{\diam}{diam}
\DeclareMathOperator{\supp}{supp}
\DeclareMathOperator{\vol}{vol}
\DeclareMathOperator{\ran}{ran}
\DeclareMathOperator{\id}{id}
\newcommand{\eins}{\boldsymbol{1}}
\newcommand{\snorm}[1]{\Vert #1 \Vert}
\newcommand{\bnorm}[1]{\Bigl\Vert \, #1 \, \Bigr\Vert}
\newcommand{\inorm}[1]{\Vert #1 \Vert_\infty}
\newcommand{\tvnorm}[1]{\Vert #1 \Vert_{\mathrm{TV}}}
\newcommand{\sLx}[2]{{\ca L_{#1}(#2)}}
\newcommand{\Lx}[2]{{L_{#1}(#2)}}
\DeclareMathOperator{\rad}{Rad}
\DeclareMathOperator{\hoel}{H\ddot o l}
\DeclareMathOperator{\TV}{TV}
\newcommand{\TVnorm}[1]{\norm{#1}_{\TV}}
\newcommand{\met}{d}
\newcommand{\pac}[3]{N^{\mathrm{pack}}_{#2}(#3,#1)}
\newcommand{\paceuclid}[2]{N^{\mathrm{pack}}(#2, #1)}
\newcommand{\metbump}[3]{f_{#1,#2,#3}}
\DeclareMathOperator{\cotype}{cotype}
\DeclareMathOperator{\type}{type}
\newcommand{\slospace}[3]{W^{#1}_{#2}(#3)}
\newcommand{\preslospace}[2]{W_{\text{aux}}^{#1}(#2)}
\newcommand{\slonorm}[4]{\snorm{#1}_{\slospace{#2}{#3}{#4}}}
\newcommand{\radspace}[2]{\rad^{#1}(#2)}
\newcommand{\radnorm}[3]{\snorm{#1}_{\radspace{#2}{#3}}}
\newcommand{\oldcotypenorm}[3]{\snorm{#1}_{\cotype_2}}
\newcommand{\cotypenorm}[1]{\snorm{#1}_{\cotype_2}}
\newcommand{\oldtypenorm}[3]{\snorm{#1}_{\type_2}}
\newcommand{\typenorm}[1]{\snorm{#1}_{\type_2}}
\newcommand{\precotypenorm}[3]{\snorm{#1}_{\cotype_2}}
\newcommand{\pretypenorm}[3]{\snorm{#1}_{\type_2}}
\newcommand{\ellpspace}[3]{\ell_{#1}^{#2}(#3)}
\newcommand{\ellpnorm}[4]{\snorm{#1}_{\ellpspace{#2}{#3}{#4}}}
\newcommand{\elltwonorm}[3]{\snorm{#1}_{\ellpspace{2}{#2}{#3}}}
\newcommand{\lpspace}[2]{L_{#1}(#2)}
\newcommand{\lpnorm}[3]{\snorm{#1}_{\lpspace{#2}{#3}}}
\newcommand{\holspace}[2]{\hoel^{#1}(#2)}
\newcommand{\twofacnorm}[1]{\snorm{#1}_{\gamma_2}}
\newcommand{\slobodeckij}[4]{\snorm{#1}_{#2,#3,#4}}
\newcommand{\mholnorm}[3]{\snorm{#1}_{\hoel^{#2}_{#3}}}
\newcommand{\Lmholnorm}[3]{\bnorm{#1}_{\hoel^{#2}_{#3}}}
\newcommand{\mholspace}[3]{\hoel^{#2}_{#3}(#1)}
\newcommand{\sobspace}[3]{H^{#1}_{#2}(#3)}
\newcommand{\triebelspace}[4]{F^{#1}_{#2,#3}(#4)}
\newcommand{\besovspace}[4]{B^{#1}_{#2,#3}(#4)}
\newcommand{\supnorm}[1]{\snorm{#1}_\infty}
\newcommand{\munorm}[1]{\abs{#1}_1}
\newcommand{\fun}{f}
\newcommand{\gun}{g}
\newcommand{\pp}{p}
\newcommand{\qq}{q}
\newcommand{\pospart}[1]{(#1)_+\,}
\newcommand{\negpart}[1]{(#1)_-\,}
\newcommand{\naturalsequence}{}
\newcommand{\embe}[2]{ #1 \hookrightarrow #2}
\newcommand{\ball}[2]{B(#2, #1)}
\newcommand{\radsequence}[1]{#1\sim\rad}
\newcommand{\wasoncered}[1]{#1}
\newlength\Origarrayrulewidth
\NewDocumentCommand\xDeclarePairedDelimiter{mmm}
{%
	\NewDocumentCommand#1{som}{%
		\IfNoValueTF{##2}
		{\IfBooleanTF{##1}{#2##3#3}{\mleft#2##3\mright#3}}
		{\mathopen{##2#2}##3\mathclose{##2#3}}%
	}%
}
\newcommand{\floor}[1]{\lfloor #1 \rfloor} 
\newcommand{\negpow}[2]{#1^{-#2}}
\newcommand{\Prob}{\mathbb{P}}
\xDeclarePairedDelimiter{\abs}{\lvert}{\rvert}
\xDeclarePairedDelimiter{\norm}{\lVert}{\rVert}
\xDeclarePairedDelimiter{\set}{\lbrace}{\rbrace}
\newcommand{\setrange}[2]{\lbrace #1,\dots, #2\rbrace}
\xDeclarePairedDelimiter{\bra}{(}{)}
\xDeclarePairedDelimiter{\sbra}{[}{]}
\newcommand{\isum}{\sum_{i=1}^n}
\newcommand{\jsum}{\sum_{j=1}^n}
\newcommand{\scal}[2]{\left\langle #1,#2\right\rangle}			%
\newcommand{\calA}{\mathcal{A}}
\newcommand{\calL}{\mathcal{L}}
\newcommand{\calM}{\mathcal{M}}
\newcommand{\calN}{\mathcal{N}}
\newcommand{\calS}{\mathcal{S}}
\newcommand{\equalDef}{\coloneqq}
\newcommand{\ha}[1]{\hat{#1}}
\newcommand{\ch}[1]{\check{#1}}
\renewcommand{\ss}{s}
\renewcommand{\tt}{t}
\newcommand{\uu}{u}
\renewcommand{\th}{\theta}
\newcommand{\al}{\alpha}
\newcommand{\cone}{\pospart{d/\pone - d/2}}
\newcommand{\ctwo}{\pospart{d/2- d/\ptwo}}
\newcommand{\czero}{\pospart{d/\pp -d/2}}
\newcommand{\A}{A}
\newcommand{\B}{B}
\newcommand{\pone}{p_1}
\newcommand{\ptwo}{p_2}
\newcommand{\qone}{q_1}
\newcommand{\qtwo}{q_2}
\newcommand{\spaceone}[4]{X^{#1}_{#2,#3}(#4)}
\newcommand{\spacetwo}[4]{Y^{#1}_{#2,#3}(#4)}
\newcommand{\Th}{\Theta}
\newcommand{\be}{\beta}
\newcommand{\hfun}{h}
\newcommand{\fnum}[1]{f_1}
\newcommand\restr[2]{{%
		\left.\kern-\nulldelimiterspace %
		#1 %
		\vphantom{\big|} %
		\right|_{#2} %
}}
\newcommand{\dual}[1]{#1'}
\newcommand{\adjoint}[1]{#1^*}
\title{Which Spaces can be Embedded in Reproducing Kernel Hilbert Spaces?}
\author{Max Sch\"olpple and Ingo Steinwart\\
University of Stuttgart\\
\small Faculty 8: Mathematics and Physics\\
\small Institute for Stochastics and Applications\\
\small Stuttgart, Germany \\
\texttt{\small max.schoelpple@mathematik.uni-stuttgart.de}\\
\texttt{\small ingo.steinwart@mathematik.uni-stuttgart.de}
}
\begin{document}

\maketitle

\begin{abstract}
	Given a  Banach space $E$ consisting of functions, we ask whether there exists
	a
	reproducing kernel Hilbert space $H$ with bounded kernel such that $E\subset H$. More generally, we consider the question, whether for a given Banach space consisting of functions $F$ with $E\subset F$,
	there exists an intermediate reproducing kernel Hilbert space $E\subset H\subset F$.
	We provide both sufficient and necessary conditions for this to hold. Moreover, we show that for typical classes of function spaces described by smoothness there is a strong dependence on the underlying dimension:  the smoothness $s$ required for the space $E$ needs  to grow \emph{proportional} to the dimension $d$ in order to allow for an intermediate reproducing kernel Hilbert space $H$.
\end{abstract}

\textbf{Mathematical Subject Classification (2010).} Primary 46E22; Secondary 46B20, 46E15, 46E35, 47A68, 68T05.

\textbf{Key Words.} Reproducing kernel Hilbert spaces, Inclusion maps, Spaces of functions,  Factorization of Operators, Type and Cotype, Besov- and Triebel-Lizorkin spaces, Sobolev spaces with mixed smoothness, Neural networks.

\section{Introduction}\label{sec:intro}

\donee
Reproducing kernel Hilbert spaces (RKHSs) are a powerful concept in various branches of mathematics and applications such as  
numerical analysis \cite{Wendland05,DiKrPi22},
stochastics \cite{BeTA04},
signal processing \cite{Gualtierotti15}
analysis \cite{Saitoh88,AgMc02,Jorgensen18,ArRosaWi19},
and machine learning \cite{ScSm02,CuZh07,StCh08,Pereverzyev22}.  
In many of these applications, understanding RKHSs is central task and the choice of suitable RKHSs is essential in numerous algorithms. 
\donee

Given an RKHS $H$ on some underlying space $X$, properties of its kernel $k$ or its elements $f\in H$ can often be described 
in terms of a surrounding space $F$, that is, by an inclusion $H\subset F$. For example,
it is well-known that 
the kernel $k$ is bounded, 
if and only if $H\subset \ell_\infty(X)$ holds, where $\ell_\infty(X)$ denotes the Banach space of all bounded functions $f:X\to \R$
equipped with the usual supremum norm.
Bounded kernels do not only play a central role in the statistical analysis of kernel-based learning algorithms, 
e.g.~\cite{StCh08},
but they are also key for kernel mean embeddings, see e.g.~\cite{SrGrFuScLa10a,StZi21a}.
Moreover,  the kernel is bounded and continuous, 
if and only if $H\subset C_b(X)$, where $C_b(X)$ denotes
the space of all bounded, continuous functions $f:X\to \R$, 
and generalizations of such inclusions to higher notion of smoothness are, of course, possible. 

In machine learning applications one is usually interested in sufficiently large RKHSs.
Here, one way to describe the ``size'' of an RKHS $H$ is by means of denseness in a surrounding space $F$. For example,
a continuous  kernel on some compact metric space $X$  is said to be universal \cite{Steinwart01a}, 
if $H\subset C(X)$ is dense, where $C(X)$ denotes the 
space of all continuous functions $f:X\to \R$. 
Several papers have investigated universal kernels, see e.g.~\cite{MiXuZh06a,SrFuLa11a,SGSc18a,StZi21a}
and the various references mentioned therein. In addition, recall that basic learning guarantees, such as universal consistency,
can be established for kernel-based learning algorithms  
if universal kernels are used, see e.g.~\cite{StCh08}. Moreover, universal kernels are very closely related to 
characteristic kernels, which play a central role for identifying probability distributions \cite{SrGrFuScLa10a, SrFuLa11a}, e.g.~with the help of 
kernel-based non-parametric two-sample tests \cite{GrBoRaScSm07a,GrBoRaScSm12a}.

Of course, the size of an RKHS $H$ could also be described by
specifying a ``lower bound'' on $H$, that is, a set, or vector space,
of functions $E$ it is supposed to contain.
This is   the focus of this paper. Namely,
we consider the question:

\begin{question}
Given two Banach spaces $E\subset F$ of functions $X\to \R$, does there exist an RKHS $H$ on 
$X$ with 
\begin{align}
	\label{eq:central_question}
	E\subset H \subset F\, ?
\end{align}
\end{question}

Here, the Banach space of functions (BSF) $F$ encodes additional properties an RKHS $H$ with $E\subset H$ is supposed to satisfy. If no additional properties
are needed, negative answers can thus be formulated by saying that \eqref{eq:central_question} is impossible for all BSFs $F$ with $E\subset F$.
For example, if $X$ is an uncountable, compact metric space such as $X=[0,1]^d$, then  \cite{Steinwart24a}
has recently shown that for the BSF $E:= C(X)$ Question  \eqref{eq:central_question}  has a negative answer for all BSFs $F$.
Besides  \cite{Steinwart24a}, however, Question  \eqref{eq:central_question}  has, to the best of our knowledge, not been considered in the literature, yet.
Nonetheless, having positive or negative answers to this question may have various applications as we will discuss later in this introduction.

Let us now briefly summarize the results we obtain in this paper. Here, it seems fair to mention that  
for the greatest part of this paper we investigate Question \eqref{eq:central_question} under the additional assumption that the
point evaluations on $E$ and $F$ are continuous. This additional assumption is, however, at best a mild restriction as it is  
satisfied for practically all BSFs, and by definition,
for all RKHSs. Moreover,
it turns out that under this additional assumption the inclusion maps $\id:E\to H$ and $\id:H\to F$
are automatically continuous, see Lemma \ref{lem:inclusion-is-continuous},
which in turn opens the door for tools from functional
analysis.
Accordingly,
we can freely switch between \eqref{eq:central_question} and $E\hookrightarrow H \hookrightarrow F$, and in particular we will do so in this 
introduction. %

Now,  in the abstract setting of generic BSFs $E$ and $F$, we show,  among other results:
\donee
\begin{itemize}
\item Question \eqref{eq:central_question} has a positive answer if and only if the inclusion map  $\id:E\to F$ is 2-factorable, that is
there exist an (abstract) Hilbert space $H$ and bounded linear operators $U:E\to H$ and $V:H\to V$
with $\id = V\circ U$, such that we have
\begin{center}
		\begin{tikzcd}
		E \arrow[rd,"U"] \arrow[hookrightarrow,rr, "\id"] \pgfmatrixnextcell \pgfmatrixnextcell F \\
		\pgfmatrixnextcell H \arrow[ru,"V"] \pgfmatrixnextcell
	\end{tikzcd},
\end{center}
see \cref{theorem:2-fac_in_bfs}.
Of course, the inclusion \eqref{eq:central_question} gives such a factorization,  but the converse requires an explicit construction of a suitable RKHS.\donee

\item Question \eqref{eq:central_question} has a positive answer if $\id:E\to F$ is 2-summing, see e.g.~\cite[Ch.~2]{DiJaTo95} for a definition 
and key properties,
or if $E$ is of type 2 and $F$ is of cotype 2, see \cref{thm:suff-2-fact}.  
Here we note that these sufficient conditions can be derived from general results on 2-factorable operators, 
see e.g.~\cite{Pisier86}
and  \cite{DiJaTo95}. 

\item If Question \eqref{eq:central_question} has a positive answer, then $\id:E\to F$ has both type 2 and cotype 2, see \cref{cor:nec-2-fact}.
\end{itemize}
For concrete families of spaces, we provide negative and often also positive results in the following cases:
\begin{itemize}
\item
For the spaces $E=\mholspace{X}{\al}{\met}$ of bounded, $\a$-H\"older continuous functions on some metric space, where $\a\in (0,1]$,
we show in \cref{thm:abstract_holder_twofac_thm} that
a positive answer to Question \eqref{eq:central_question} with $F=\ell_\infty(X)$
implies a strong upper bound on the covering, or packing
numbers of $X$.
As a consequence, for the open, $k$-dimensional cube $X:=(0,1)^k$ and $\a<k/2$ no positive answer is possible,  
while for $\a \in (k/2,1)$ we have a positive answer, see \cref{lemma:besov_c0_twofac}. In particular, for $k\geq 3$,   none of the spaces  
$\holspace{\al}{(0,1)^k}{}$ %
can be embedded into an RKHS, while for $k=1$ such an embedding is possible for all $\a\in (1/2,1)$ using a fractional Sobolev space $H=\sobspace{\uu}{2}{(0,1)}$ for $u\in (1/2,\a)$.
Moreover, in this case the inclusion $\mholspace{(0,1)}{1}{} \subset \mholspace{(0,1)}{1-\varepsilon}{}$, which holds for all $0<\varepsilon<1/2$,  
shows that 
$\mholspace{(0,1)}{1}{}$ can also be embedded into the RKHS $\sobspace{\uu}{2}{(0,1)}$ with bounded kernel for any $\uu\in (1/2,1)$.

\item
For Sobolev spaces $E$ and $F$ of integer (mixed) smoothness on some bounded domain $X\subset \Rd$ we show that if  Question \eqref{eq:central_question} has a positive answer, then the difference of the involved smoothness parameters needs to be
sufficiently large, see \cref{thm:mixed_twofac} and \cref{thm:mixed_twofac_C-0}. In the case of Sobolev spaces this difference needs to grow linearly with the dimension $d$, while for
Sobolev spaces with mixed smoothness, this is not the case. \donee
\item For Sobolev-Slobodeckij spaces $E$ and $F$ on some bounded domain $X\subset \Rd$  we  provide positive and negative results
that match to each other modulo some limit cases. In a nutshell, positive results are possible if and only if the difference of the involved smoothness parameters is sufficiently large compared to $d$, see \cref{corollary:parameter_reqs_in_slob_spaces} for details.
The same is true if we consider $F=\ell_\infty(X)$, instead.
\item The results for  Sobolev-Slobodeckij spaces can be extended to spaces  $E$ and $F$ from the Besov or Triebel-Lizorkin family of spaces,
see \cref{thm:two_factorisability_of_general_triebel_besov_embedds} and \cref{lemma:besov_c0_twofac}.
\end{itemize}
Here we note that   our   negative answers to Question  \eqref{eq:central_question}  are always derived by showing that
the inclusion map $\id:E\to F$ fails to have type 2 or cotype 2. On the other hand, we always construct positive results by
employing well-known embedding theorems for the involved spaces.
 Moreover, in some cases our positive and negative results complement each other (modulo some limit cases), and therefore
our type 2/cotype 2 technique is at least sometimes sharp. In these cases, we further see (modulo limit cases)
that \eqref{eq:central_question}  has a positive answer, if and only if one already
  knows   an RKHS 
  $H$   from the literature.

Let us finally illustrate the potential impact of positive and negative answers to Question  \eqref{eq:central_question}.
Here, our first example considers kernel-based learning algorithms, see e.g.~\cite{CuZh07,StCh08}.
For such learning algorithms,
standard (and fixed)  RKHSs
are sometimes viewed to be too small.
To be specific, while the Gaussian kernel with width $\g$ is known to be universal for every single value of $\g$, 
it is also known that from a quantitative perspective it only slowly approximates non-$C^\infty$-functions, see e.g.~\cite{SmZh03a}.
For this reason one either lets $\g$ depend on the sample (size), see e.g.~\cite{EbSt13a}, or considers e.g.~so-called hyper-RKHS, see \cite{OnSmWi05a,LiShHuYaSu21a}.
Rather than employing such approaches, one could, however, also look for e.g.~tailored RKHSs:
Namely, if we have a space $E$ of target functions
for which we like to construct a kernel-based algorithm with ``good'' learning behaviour,  an RKHS $H$ with $E\subset H\subset F$ can be desirable.
Indeed, $E\subset H$ ensures a good approximation error during the analysis of the resulting algorithm, while $H\subset F$ can provide a strong
 eigenvalue or entropy number decay, see e.g.~\cite[Lem.~7.21 and Thm.~6.26]{StCh08} for the cases $H\subset \ell_\infty(X)$ or $H\subset C^\a(X)$,
 which in turn leads to a small estimation error.

Here we note that recently, BSFs 
have attracted interest in the machine learning  community since neural networks define particular forms of such spaces, see
 e.g.~\cite{BaDVRoVi23a}
and the references mentioned therein. 
In addition, \cite[Sec.~2.3]{Bach17a} compares the differences between neural network specific BSFs $E$
and related (but unfortunately smaller) RKHSs $H$ in some detail. 
 We provide sufficient requirements for the corresponding integral reproducing kernel Banach spaces $E$ to be included in an RKHS $H$ in \cref{subsubsection:RKBS}.

Another possible application are so-called integral probability metrics, see e.g.~\cite{Mueller97a} and the references mentioned therein.
Given a measurable space $(X,\ca A)$ and a set $\ca F$ of  measurable functions $X\to \R$, these metrics are defined as 
\begin{align*}
\g_{\ca F}(P,Q) \equalDef \sup_{f\in \ca F} \Bigl| \int_X f \, dP - \int_X f \,dQ \Bigr|
\end{align*}
for all probability measures $P$ and $Q$ on $(X,\ca A)$ for which the integrals do  exist.
For example, if $\ca F =B_{\sLx \infty X}$
is the unit ball in the space $\sLx \infty X$ of all
bounded measurable functions $X\to \R$ equipped with the supremum norm, then $\g_{\ca F}$ equals the metric obtained from the 
total variation norm.
In addition, if $(X,d)$ is a separable metric space equipped with the Borel $\s$-algebra and
$\ca F$  is the unit ball in the space $\mholspace{X}{1}{\met}$ of all \emph{bounded} Lipschitz functions, then $\g_{\ca F}$ is known as the \emph{Dudley metric}, which metrizes 
the weak convergence of probability measures, see e.g.~\cite[Theorem 11.3.3]{Dudley02}.
Furthermore, if $\ca F$ is the set of all Lipschitz continuous functions with Lipschitz constant $\leq 1$ and $(X,d)$ 
is a bounded metric space, 
then the famous Kantorovich–Rubinstein theorem shows that $\g_{\ca F}$ equals the Wasserstein 1-distance, see e.g.~\cite[Theorem 11.8.2]{Dudley02}.
Finally, if $\ca F$ is the unit ball $B_H$ of an RKHS $H$ with bounded, measurable kernel, then $\g_{\ca F}$ is called
$H$-maximum mean discrepancy (MMD), see \cite{GrBoRaScSm07a}.  Moreover, it can be shown that having a bounded and measurable
kernel is also necessary for the MMD to be defined for all probability measures on $(X,\ca A)$, see e.g.~\cite[Prop.~2]{SrGrFuScLa10a}
in combination with \cite[Lem.~4.23]{StCh08}.
Finally, recall that, unlike many other integral probability metrics, MMDs can be
 both expressed in closed form and estimated from data, see e.g.~\cite{GrBoRaScSm12a}.

This raises the question of how powerful MMDs are compared to the general class of integral probability metrics.
To discuss this question, let us fix an RKHS $H$ with bounded measurable kernel.
Then $\g_{B_H}(P,Q)$ exists for all probability measures $P$ and $Q$ on $X$ and we have 
\begin{align*}
  \g_{B_H}(P,Q)   \leq \snorm{\id: H\to \sLx \infty X}   \cdot  \tvnorm{P-Q}\, .
\end{align*}
Consequently, if an empirical estimate ensures that $\g_{B_H}(P,Q)$ is ``large'' with high probability, then $P$ and $Q$ are 
``rather distinct'' in the sense of the total variation norm.  
However, if $\g_{B_H}(P,Q)>0$ is ``small'', then we cannot guarantee that $\norm{P-Q}_{\mathrm{TV}}$ is sufficiently small
by the inequality above, see also \cite{StZi21a} for a rigorous negative result in this direction. 
To overcome this issue, the current MMD literature focuses on \emph{characteristic} kernels, that is, on kernels for which 
 $\g_{B_H}(P,Q)=0$ implies $P=Q$. To obtain a more quantitative result, assume that we have a BSF $E$ with unit ball $B_E$ and $E\hookrightarrow H$.
 Then the inequality above can be extended to 
\begin{align*}
\snorm{\id: E\to H}^{-1}  \cdot  \g_{B_E}(P,Q) \leq  \g_{B_H}(P,Q)  \leq \snorm{\id: H\to \sLx \infty X}   \cdot  \tvnorm{P-Q} \, .
\end{align*}
Consequently, $\g_{B_H}(P,Q)>0$ is ``small'', then $P$ and $Q$ are ``similar'' in the sense of the integral probability metric 
$\g_{B_E}$.
Depending on the space $E$, the integral probability metric $\g_{B_E}$ might have a clear and intuitive meaning, but it might be difficult or even infeasible to estimate $\g_{B_E}$ from data. 
In this case, MMDs may still be helpful.
Indeed, if we have RKHSs $H_1$ and $H_2$ with $H_1\hookrightarrow E\hookrightarrow H_2\hookrightarrow\sLx \infty X$, then
\begin{align*}
\snorm{\id: H_1\to E}^{-1}  \cdot  \g_{B_{H_1}}(P,Q) \leq  \g_{B_E}(P,Q)   \leq \snorm{\id: E\to H_2}   \cdot  \g_{B_{H_2}}(P,Q) 
\end{align*}
holds for all probability measures $P$ and $Q$ on $X$. Consequently, if we know from data that $\g_{B_{H_1}}(P,Q)$  is large with high probability, 
then, with the same probability,  $P$ and $Q$ are rather distinct 
in the sense of $\g_{B_E}$. Conversely, if we know from data that $\g_{B_{H_2}}(P,Q)$ is small with high probability, then 
with the same probability,  $P$ and $Q$ are rather similar
in the sense of $\g_{B_E}$. In other words, using $H_1$ and $H_2$ we may gain a two-sided \emph{quantitative} control over $\g_{B_E}$.

\donee

In view of this discussion let us now  
 illustrate the consequences of our findings for MMDs. To this end, let $X:=(0,1)^k$.  
 If $k=1$, 
we have already noted above that we have the inclusions
\begin{align*}
\mholspace{(0,1)}{1}{} \subset \sobspace{\uu}{2}{(0,1)} \subset \sLx \infty {(0,1)}
\end{align*}
for all $u\in (1/2,1)$. Consequently, we have MMDs that provide upper bounds for the Dudley metric,  while  RKHS  $H$ with sufficiently smooth kernel
satisfy $H\subset \mholspace{(0,1)}{1}{}$, that is, they provide lower bounds for the Dudley metric. 
On the other hand, in the case $k\geq 3$, the well-established inclusions between Besov spaces do not provide 
a fractional Sobolev space between $\mholspace{(0,1)^k}{1}{}$ and $ \sLx \infty {(0,1)^k}$, and our results now show that 
there is actually no RKHS $H$ with $\mholspace{(0,1)^k}{1}{} \subset H \subset \sLx \infty {(0,1)^k}$.
As a consequence, we do not obtain an MMD-based upper bound of the Dudley metric as soon as the dimension 
satisfies $k\geq 3$, while a lower bound is still possible by using sufficiently smooth kernels.

The examples on learning algorithms and integral probability metrics can be generalized. Namely,  
 if we wish to design an algorithm that deals with functions $f\in E$ via their point evaluations, then
having an RKHS $H$ with $E\subset H$ could have significant algorithmic advantages: Indeed,
the ``kernel trick'' \cite{ScSm02} can simplify the computation of inner products in $H$, which in turn
makes it possible to use inner products in the algorithm design by 
 interpreting a function $f\in E$  as an element of $H$.
Similarly, if  
no such $H$ exists, then  we know that no such algorithmic short-cut is possible.

\donee

The rest of this paper is organized as follows:
In Section \ref{sec:prelim} we recall some notion and  technical tools used in this paper. In addition, most of the notation is fixed.
Section \ref{sec:general} presents all our results in the abstract setting and Section \ref{sec:examples}
contains all results related to concrete spaces. In addition, we discuss our general approach for deriving negative results
for two simple families of spaces.
Finally, all proofs  
can be found in Section \ref{sec:proofs}.

\section{Preliminaries}\label{sec:prelim}

Given a number $\ss\in\R$, we write $\floor\ss\equalDef \max\{k\in\Z\mid k\le \ss\}$ and $\pospart \ss\equalDef \max\{0,\ss\}, \negpart \ss\equalDef \max\{0,-\ss\}$.

In the following, $E$ and $F$ denote Banach spaces with norms $\snorm\cdot_E$ and $\snorm\cdot_F$ and closed
unit balls $B_E$ and $B_F$, respectively. Moreover, $E'$ and $F'$ denote their dual spaces.
In the case $E\subset F$, we write $E\hookrightarrow F$, if the corresponding inclusion map is continuous.

In this work, we are mostly interested in Banach spaces consisting of functions. The following definition
introduces these spaces formally.

\begin{definition}\label{def:bsf}
 Let $X\neq \emptyset$. Then a Banach space $E$ is called a Banach space of functions (BSF) on $X$, if
 all its elements are functions mapping from $X$ to $\R$. Moreover, we say that $E$ is a proper BSF, if
 for all $x\in X$, the evaluation functional
 \begin{align*}
  \d_x:E&\to \R \\
 f &\mapsto f(x)
 \end{align*}
 is continuous. A proper BSF with a Hilbert space norm is called reproducing kernel Hilbert space (RKHS).
\end{definition}

In the literature, Banach spaces of functions are also called \emph{reproducing kernel Banach spaces}. These spaces
have been recently gained interest for the analysis of neural networks, see e.g.~\cite{BaDVRoVi23a}.

The usual sequence spaces $c_0$  and $\ell_p$ for $p\in [1,\infty]$ are all proper BSFs.
Moreover, the space of $\R$-valued, bounded continuous functions $C^0(X)$ on some metric space $X$
is also a proper BSF, while the usual Lebesgue spaces on e.g.~the unit interval, that is, $\Lx p {[0,1]}$,
fail to be a BSF. Finally, the space $\ell_\infty(X)$ of all bounded functions $f:X\to \R$ equipped with
the supremum norm   is also a proper BSF.

Note that in all proper BSFs $E$ on $X$ norm-convergence implies pointwise convergence, that is, if $(f_n)\subset E$
is a sequence converging to some $f\in E$ in the sense of $\snorm{f_n-f}_E\to 0$, then
$f_n(x)\to f(x)$ for all $x\in X$.

RKHSs have been studied in great detail in the literature. For basic information we refer to e.g.~\cite[Chapter 4]{StCh08}. 
For reproducing kernel Hilbert spaces $H$ on $X$, the corresponding (reproducing) kernel $k:X\times X\to \R$
can be defined by
$$
k(x,x') \equalDef \langle \d_x,\d_{x'}\rangle_{H'}\, , \qqqquad x,x'\in X.
$$
It is well-known that various properties of the functions in $H$ can be characterized by properties of
$k$. For example, all functions $f\in H$ are bounded if and only if the kernel $k$ is bounded,
see e.g.~\cite[Lem.~4.23]{StCh08}. Consequently,
if we have a proper BSF $E$ and we seek a surrounding RKHS $H$ with bounded kernel, we can express this
equivalently as
\begin{align}\label{eq:linfty-inclusion}
E\subset H\subset \ell_\infty(X)\, .
\end{align}
Note that in this case, the inclusion maps are automatically continuous, see Lemma \ref{lem:inclusion-is-continuous}.

\donee

Many results of this paper rely on the type and cotype of operators and spaces. For this reason, let
us quickly recall these notions.
To this end, we fix a Banach space $E$ and some $n\geq 1$.
Then we define a norm  on the $n$-fold product space $E^n$ by
\begin{align*}
    \ellpnorm{(x_1,\dots,x_n)}2nE &\equalDef
    \bra[\Big]{ \isum \norm{x_i}_E^2}^{1/2}\, , \qquad \qquad x_1,\dots,x_n\in E,
\end{align*}
which we sometimes call the \emph{sequence norm}.
Moreover, let $\e =(\e_i)$  
be a
\emph{Rademacher sequence}, that is, a sequence of i.i.d.~random variables fulfilling
$\Prob(\varepsilon_1=1)=\Prob(\varepsilon_1=-1)=1/2$. Then
\begin{align}
    \label{eq:radnorm_definition}
    \radnorm{(x_1,\dots,x_n)}nE
    \equalDef  \E\norm[\big]{\isum \varepsilon_i x_i}_E \, , \qquad \qquad x_1,\dots,x_n\in E
\end{align}
defines another norm on $E^n$, which we sometimes call the \emph{Rademacher norm}.
Here we note that in the literature one also considers $p$-norms on the right hand side, but due to Kahane's inequality, see e.g.~ \cite[Thm.~11.1]{DiJaTo95},
the resulting norms are    equivalent   to \eqref{eq:radnorm_definition} with constants independent of $n$. 

Now, the basic idea of type and cotype is to compare sequence norms and Rademacher norms uniformly in $n$.
Namely, for an operator $A\in \calL(E,F)$ we define
	\begin{align}
		\label{eq:def_typenorm}
		\oldtypenorm A{\wasoncered{2}}n
		\equalDef
		\sup_{\substack{n\in\N\\{(x_1,\dots,x_n)\in E^n\setminus \set 0}}} \frac{\radnorm {(Ax_1,\dots,Ax_n)} nF}{ \ellpnorm {(x_1,\dots,x_n)} {\wasoncered{2}}nE}
		.
	\end{align}
and we say that $A$ is of \emph{type 2} if $\oldtypenorm A{\wasoncered{2}}n < \infty$. Analogously, we write
	\begin{align}
		\label{eq:def_cotypenorm}
		\oldcotypenorm A{\wasoncered{2}}n
		\equalDef
		\sup_{\substack{n\in\N\\{(x_1,\dots,x_n)\in E^n\setminus \set 0}}} \frac{ \ellpnorm {(Ax_1,\dots,Ax_n)} {\wasoncered{2}}nF}{\radnorm {(x_1,\dots,x_n)} nE}
	\end{align}
and we say that $A$ is of \emph{cotype 2}, if $\oldcotypenorm A{\wasoncered{2}}n <\infty$.
It is straightforward to verify that $\oldtypenorm \cdot {\wasoncered{2}}n$ defines a norm on the space of all type 2 operators $A\in \calL (E,F)$ and an analogous statement is true for $\oldcotypenorm \cdot{\wasoncered{2}}n$ on the space of
all cotype 2 operators $A\in \calL (E,F)$.
Moreover, if if we have Banach spaces $E_0$ and $F_0$, as well as bounded linear operator $S:E_0\to E$ and $T:F\to F_0$,
then some simple calculations show the following two inequalities, which describe an ideal property of type and cotype 2 operators:
\begin{align} \label{eq:type-2-ideal-norm}
\oldtypenorm {T\!AS}{\wasoncered{2}}n  &\leq \snorm T \oldtypenorm A{\wasoncered{2}}n  \snorm S\, , \\ \label{eq:cotype-2-ideal-norm}
\oldcotypenorm {T\!AS}{\wasoncered{2}}n&\leq \snorm T \oldcotypenorm A{\wasoncered{2}}n \snorm S\, .
\end{align}
Finally, we say that a Banach space $E$ is of type 2 or of cotype 2, if the identity map $\id_E:E\to E$ is of type 2, respectively cotype 2.
In this respect we also need to recall that Hilbert spaces $H$ are of both type 2 and cotype 2.
For more information on type and cotype we refer to \cite[Chapter 11]{DiJaTo95}.

\donee

Let $(X,\met)$ be a metric space. Then,
we denote the closed ball of radius $\d$ with center $x\in X$ by $\ball \d x$.
Moreover, given a $\d>0$,
a sequence $x_1,\dots,x_n\in X$ is called a $\d$-packing in $X$, if $\met(x_i,x_j)\ge \d$ holds  for any $i\ne j$. The \emph{packing numbers } of $X$ are defined as
\begin{align*}
	\paceuclid{X}{\d}\equalDef \pac{X}{\met}{\d} \equalDef \sup\{n\in\N\mid \text{ there exists a }\d\text{-packing } x_1,\dots, x_n\in X \}
\end{align*}
for all $\d>0$.
It is well known that for bounded $X\subset \Rd$ with non-empty   interior there exist
constants $0<c\le C$ such that
\begin{align}\label{eq:cov-num-Rd}
c \d^{-d} \le \pac\d {\met}{X} \le C\d^{-d}\, , \qquad \qquad \d\in (0,1].
\end{align}

\donee
\section{General Results}\label{sec:general}

\donee

In this section we present several results investigating the situation $E\subset H\subset F$,
where $E$ and $F$ are proper BSFs on   $X$ and $H$ is an RKHS on $X$. In particular, we derive both sufficient and necessary
conditions for the existence of such an RKHS  $H$.
We begin with a simple characterization in the case that $E$ is a \emph{closed subspace} of $F$.
In a nutshell it shows that in this case, no work for finding an RKHS $H$ is required.

\begin{proposition}\label{prop:closed-subspaces}
 Let $F$ be a proper BSF on $X$ and $E$ be a closed subspace of $F$. Then $E$ is a proper BSF and
 the following statements are equivalent:
 \begin{enumerate}
  \item There exists an RKHS $H$ with $E\subset H\subset F$.
  \item $E$ is isomorphic to a Hilbert space.
 \end{enumerate}
Moreover, in this case we have $E\hookrightarrow H \hookrightarrow F$.
\end{proposition}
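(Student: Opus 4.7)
The plan is to handle the easy structural claim first, then each direction of the equivalence, keeping in mind that closed subspace status and the automatic continuity of inclusions (Lemma \ref{lem:inclusion-is-continuous}) do most of the work.

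First I would verify that $E$ is a proper BSF. As a closed subspace of a Banach space, $E$ is complete under the inherited norm, and for every $x\in X$ the evaluation $\d_x:E\to\R$ is the restriction of the continuous evaluation on $F$, hence continuous. In particular $E\hookrightarrow F$ is the isometric inclusion.

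Next I would prove \emph{(i)} $\Rightarrow$ \emph{(ii)}. Given an RKHS $H$ with $E\subset H\subset F$, Lemma \ref{lem:inclusion-is-continuous} (applicable because $E$, $H$, $F$ are all proper BSFs) yields constants $c_1,c_2>0$ such that $\snorm{f}_F\leq c_1 \snorm{f}_H$ for all $f\in H$ and $\snorm{f}_H\leq c_2 \snorm{f}_E$ for all $f\in E$. Combined with the isometric inclusion $E\hookrightarrow F$, this gives the two-sided estimate $c_1^{-1}\snorm{f}_E \leq \snorm{f}_H \leq c_2 \snorm{f}_E$ for $f\in E$. Now I would restrict the inner product $\langle\cdot,\cdot\rangle_H$ to $E\times E$: this defines an inner product on $E$ whose induced norm is equivalent to $\snorm{\cdot}_E$. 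Since $E$ is complete in $\snorm{\cdot}_E$, it is also complete in this equivalent Hilbert norm, so $E$ is isomorphic to a Hilbert space. This is the step I expect to carry the most conceptual weight, but the difficulty is mild once the lemma gives the required norm sandwich.

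For \emph{(ii)} $\Rightarrow$ \emph{(i)} I would pick a Hilbert space $(H_0,\langle\cdot,\cdot\rangle_{H_0})$ and a Banach space isomorphism $T:E\to H_0$, and transport the inner product back to obtain an equivalent inner product $\langle f,g\rangle_H := \langle Tf, Tg\rangle_{H_0}$ on $E$. Set $H:=E$ as a set, equipped with this inner product. Completeness in the new Hilbert norm follows from the norm equivalence and completeness of $E$. To see $H$ is an RKHS, I would observe that its elements are real-valued functions on $X$, and that each evaluation $\d_x$ is continuous on $H$ because it is continuous on $E$ and the two norms are equivalent. By construction $E\subset H\subset F$ set-theoretically, finishing this direction.

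The concluding ``$E\hookrightarrow H\hookrightarrow F$'' claim is then immediate from Lemma \ref{lem:inclusion-is-continuous} applied to the three proper BSFs $E$, $H$, $F$. The only subtle point throughout is keeping track of the distinction between ``equivalent norms'' and ``equal norms''; once one is comfortable equipping $E$ with an equivalent Hilbertian norm inherited from $H$ (or pulled back from $H_0$), the argument is essentially automatic.
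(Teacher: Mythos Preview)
Your proof is correct, and in the direction \emph{(i)} $\Rightarrow$ \emph{(ii)} it is actually more direct than the paper's. The paper first proves that $E$ is a \emph{closed} subspace of $H$ (by taking an $H$-convergent sequence in $E$, pushing convergence to $F$, using $\snorm{\cdot}_E=\snorm{\cdot}_F$ to get a Cauchy sequence in $E$, and identifying limits pointwise), and only then invokes the Open Mapping Theorem to conclude that $\snorm{\cdot}_E$ and $\snorm{\cdot}_H$ are equivalent on $E$. You bypass both the closedness argument and the Open Mapping Theorem: the sandwich $\snorm{f}_E=\snorm{f}_F\le c_1\snorm{f}_H\le c_1c_2\snorm{f}_E$ gives the norm equivalence immediately, after which restricting $\langle\cdot,\cdot\rangle_H$ to $E$ yields the desired Hilbert structure. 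The paper's route incidentally establishes the stronger fact that $E$ is closed in $H$, but for the stated conclusion your argument is cleaner. The direction \emph{(ii)} $\Rightarrow$ \emph{(i)} and the remaining structural claims match the paper's treatment.
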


Since neither $c_0$ nor $\ell_\infty$ are isomorphic to a Hilbert space we immediately
conclude by \eqref{eq:linfty-inclusion} and
Proposition \ref{prop:closed-subspaces} with $F\equalDef \ell_\infty$ that there exists no RKHS $H$ with bounded
kernel such that $c_0 \subset H$ or $\ell_\infty \subset H$.
Similarly, if $(X,d)$ is a compact metric space such that $E\equalDef C(X)$ is infinite dimensional, then this space
is not isomorphic to a Hilbert space, and by considering $F\equalDef\ell_\infty(X)$ in Proposition \ref{prop:closed-subspaces}, we conclude that there is no
RKHS $H$ with bounded kernel and $C(X) \subset H$.

\donee

Given a proper BSF $E$ on $X$ and a non-empty subset $Y\subset X$, we can consider the restricted functions $f_{|Y}:Y\to \R$
for all $f\in E$. The set $E_{|Y}$ of such restrictions is again a BSF, see Lemma \ref{lem:restricted-BFS} for a formal statement.
Moreover, 
we note that the restriction $H_{|Y}$ of an RKHS $H$ on $X$ with kernel $k$ is again an RKHS. Its  kernel
is given by restricting $k$ to $Y\times Y$, that is by $k_{|Y\times Y}$. This is a direct consequence of the
fundamental theorem of RKHS, see \cite[Thm.~4.21]{StCh08} and \cite[Ex.~4.4]{StCh08}.

The following proposition shows that inclusions $E\subset F$ are preserved when applying the same restriction operator
to both spaces $E$ and $F$.

\begin{proposition}\label{prop:restricted-inclusions}
 Let $E$ and $F$ be proper BSFs on $X$ with $E\subset F$. Then for all non-empty  $Y\subset X$ we have
 \begin{align*}
  E_{|Y} \hookrightarrow F_{|Y}
 \end{align*}
 with $\snorm{\id:  E_{|Y} \to F_{|Y}  } \leq \snorm{\id:E\to F}$.
\end{proposition}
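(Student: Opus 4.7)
The plan is to argue directly from the quotient-norm description of restricted BSFs. By the referenced Lemma on restricted BSFs, the restriction map $R^E_Y : E \to E_{|Y}$, $f \mapsto f_{|Y}$, is a surjective bounded linear map whose kernel $\ker R^E_Y = \{f\in E : f_{|Y} = 0\}$ is closed in $E$ (because $E$ is a \emph{proper} BSF, so each $\delta_y$, $y\in Y$, is continuous). The same applies to $R^F_Y : F \to F_{|Y}$, and the natural norm on the restricted spaces is the associated quotient norm
\[
 \snorm{g}_{E_{|Y}} = \inf\bigl\{\snorm{f}_E : f\in E,\, f_{|Y} = g\bigr\}, \qquad \snorm{g}_{F_{|Y}} = \inf\bigl\{\snorm{f}_F : f\in F,\, f_{|Y} = g\bigr\}.
\]

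First, I would note that the inclusion $E\hookrightarrow F$ is automatically continuous by Lemma \ref{lem:inclusion-is-continuous}, so $c \equalDef \snorm{\id : E \to F}$ is finite. Then, given any $g \in E_{|Y}$ and any $\varepsilon > 0$, I would pick (by definition of the quotient norm) some $f\in E$ with $f_{|Y} = g$ and $\snorm{f}_E \le \snorm{g}_{E_{|Y}} + \varepsilon$. Because $E\subset F$ at the level of functions on $X$, this same $f$ is an element of $F$ with $\snorm{f}_F \le c\,\snorm{f}_E$. Hence $g = f_{|Y} \in F_{|Y}$ and, by the definition of $\snorm{\cdot}_{F_{|Y}}$ as an infimum,
\[
 \snorm{g}_{F_{|Y}} \le \snorm{f}_F \le c\bigl(\snorm{g}_{E_{|Y}} + \varepsilon\bigr).
\]
Sending $\varepsilon \to 0$ yields $E_{|Y} \subset F_{|Y}$ with $\snorm{\id : E_{|Y} \to F_{|Y}} \le c = \snorm{\id:E\to F}$.

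There is really no single hard step here; the only thing one has to be a bit careful about is that the two ``restricted BSFs'' are genuinely Banach spaces carrying the quotient norm (as supplied by the restricted-BSF lemma) and that the inclusion $E\subset F$ interpreted as an equality of functions on $X$ plays well with the pointwise restriction to $Y$. Once those definitions are unwound, the inequality follows from a one-line application of the definition of the infimum in the quotient norm of $F_{|Y}$.
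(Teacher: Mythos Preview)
Your proof is correct and follows essentially the same approach as the paper: both use the quotient-norm description of $E_{|Y}$ and $F_{|Y}$, pick an extension $f\in E$ of $g$, and use $\snorm{f}_F \le \snorm{\id:E\to F}\,\snorm{f}_E$ to bound $\snorm{g}_{F_{|Y}}$. The only cosmetic difference is that the paper takes an infimum over all extensions $f$ at the end, whereas you use an $\varepsilon$-close extension and let $\varepsilon\to 0$.
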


To illustrate Proposition \ref{prop:restricted-inclusions}, we assume that we have two proper BSF $E$ and $F$ on  $X$ with
$E\subset F$ and we seek an RKHS $H$
with $E\subset H\subset F$. If there exists such an $H$, then Proposition \ref{prop:restricted-inclusions} ensures
\begin{align}\label{eq:restricted-inclusion}
E_{|Y} \hookrightarrow H_{|Y} \hookrightarrow F_{|Y}\, ,
\end{align}
and in addition, $H_{|Y}$ is an RKHS. In other words, if we find a non-empty $Y\subset X$, for which there is no
RKHS $\tilde H$ on $Y$ with $E_{|Y} \subset \tilde H \subset F_{|Y}$, then there is also no RKHS $H$ on $X$ with $E\subset H\subset F$.
As a consequence, we only need to consider our Question \eqref{eq:central_question} on subsets $Y$ that we can suitably control.

\donee
The following definition is crucial for deriving both positive and negative answers to our Question \eqref{eq:central_question}. 
\begin{definition}\label{definition:two-fac}
	Let $E,F$ be Banach spaces and $A:E\to F$ be a bounded linear operator.  We say that $A$ is \emph{2-factorable} if there exist a Hilbert space $H$ and bounded linear operators $U:E\to H$ and $V:H\to F$  such
	the following \emph{2-factorization} of $A$ holds:
	 \begin{align}
	 	\label{eq:twofactorisation}
	 	\begin{tikzcd}
	 		E \arrow[rd,"U"] \arrow[rr, "A"] \pgfmatrixnextcell \pgfmatrixnextcell F \\
	 		\pgfmatrixnextcell H \arrow[ru,"V"] \pgfmatrixnextcell
	 	\end{tikzcd}
 		.
	 \end{align}
	Moreover, we define $\twofacnorm A  \equalDef \inf \norm U\norm V$, where the infimum runs over all 2-factorizations of $A$.
\end{definition}

It can be shown that %
$\twofacnorm \mycdot$ defines an operator ideal norm, see \cite[Thm.~7.1 in combination with p.~155]{DiJaTo95}  for details.
Namely,  if we have Banach spaces $E_0$ and $F_0$ and bounded linear operators $S:E_0\to E$ and $T:F\to F_0$,
and a 2-factorable operator $A:E\to F$, then $T\!AS:E_0\to F_0$ is also 2-factorable and we have
\begin{align*}
\twofacnorm {T\!AS} \leq \snorm T \twofacnorm A  \snorm S\, .
\end{align*}
With this observation, we almost immediately obtain the following lemma, which will play a central role in our analysis.

\begin{lemma}\label{lem:2fact-implies-twotype}
Let $E,F$ be Banach spaces and $A:E\to F$ be a bounded linear operator.  If $A$ is 2-factorable, then it is both of type 2 and cotype 2.
\end{lemma}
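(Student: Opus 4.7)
The plan is to exploit the factorization $A = V \circ U$ through a Hilbert space, together with the already-recorded fact that Hilbert spaces have both type 2 and cotype 2, and the ideal-norm inequalities \eqref{eq:type-2-ideal-norm} and \eqref{eq:cotype-2-ideal-norm}. Essentially, type 2 and cotype 2 are transported through the Hilbert space factor.

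First, I would invoke the definition: since $A$ is 2-factorable, fix any 2-factorization $A = V \circ U$ with $U \in \calL(E,H)$, $V \in \calL(H,F)$, and $H$ a Hilbert space. Recall from the preliminaries that every Hilbert space has both type 2 and cotype 2, which means that the identity operator $\id_H : H \to H$ satisfies $\oldtypenorm{\id_H}{2}{n} < \infty$ and $\oldcotypenorm{\id_H}{2}{n} < \infty$. (This is the only external ingredient, and it is cited in the preliminaries.)

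Next, I would write $A = V \circ \id_H \circ U$ and apply the ideal inequalities \eqref{eq:type-2-ideal-norm} and \eqref{eq:cotype-2-ideal-norm} with $T \equalDef V$ and $S \equalDef U$. This yields
\begin{align*}
\oldtypenorm{A}{2}{n}   &\leq \snorm{V} \cdot \oldtypenorm{\id_H}{2}{n}   \cdot \snorm{U} < \infty, \\
\oldcotypenorm{A}{2}{n} &\leq \snorm{V} \cdot \oldcotypenorm{\id_H}{2}{n} \cdot \snorm{U} < \infty,
\end{align*}
so $A$ is both of type 2 and cotype 2, as claimed. Taking the infimum over all factorizations even gives the quantitative bounds $\oldtypenorm{A}{2}{n} \le \oldtypenorm{\id_H}{2}{n}\,\twofacnorm{A}$ and similarly for cotype 2, though that refinement is not needed for the statement.

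There is essentially no obstacle here: the lemma is a direct composition of two standard facts (Hilbert spaces have type/cotype 2, and type/cotype 2 are ideal norms). The only thing to be mildly careful about is that the ideal inequalities in \eqref{eq:type-2-ideal-norm}–\eqref{eq:cotype-2-ideal-norm} were stated for arbitrary $S$ and $T$, which is exactly the shape we need with $S = U$, $T = V$, and the middle operator being $\id_H$.
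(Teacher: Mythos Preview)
Your proof is correct and essentially identical to the paper's: both fix a 2-factorization, insert $\id_H$ between $U$ and $V$, and apply the ideal inequalities \eqref{eq:type-2-ideal-norm}--\eqref{eq:cotype-2-ideal-norm} together with the fact that Hilbert spaces have type 2 and cotype 2.
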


\donee

Note that if we have BSFs $E$ and $F$ and an RKHS $H$ with $E\hookrightarrow H\hookrightarrow F$, then
the inclusion map $\id :E\to F$ is obviously 2-factorable.
The following theorem shows that the converse implication is also true.

\begin{theorem}\label{theorem:2-fac_in_bfs} 
	Let $E$ and $F$ be  BFSs on some set $X$ with
	$E \hookrightarrow F$. Then this inclusion map is 2-factorable if and only if there exists an RKHS $H$ over $X$ such that
	\begin{align*}
		E\subset  H\subset  F\, .
	\end{align*}
	In this case we also have $E\hookrightarrow H\hookrightarrow F$.
\end{theorem}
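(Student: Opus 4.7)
The plan is to prove each direction separately, with the forward (only if) direction being essentially tautological and the reverse (if) direction requiring an explicit construction of the RKHS.

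For the forward direction, suppose there exists an RKHS $H$ with $E \subset H \subset F$. Since $E$, $H$, and $F$ are all proper BSFs, the two inclusion maps $\id:E\to H$ and $\id:H\to F$ are automatically continuous by \cref{lem:inclusion-is-continuous}. Taking $U = \id:E\to H$ and $V = \id:H\to F$, we obtain a 2-factorization of $\id:E\to F$ through the Hilbert space $H$, which immediately gives both 2-factorability and the continuity claim $E\hookrightarrow H\hookrightarrow F$.

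The substance is in the reverse direction. Given a factorization $\id_{E\to F} = V\circ U$ with $U:E\to H_0$ and $V:H_0\to F$ for some abstract Hilbert space $H_0$, I would build the RKHS as a quotient of $H_0$ realized concretely as a subspace of $F$. Concretely, let $N\equalDef \ker V \subset H_0$, which is a closed subspace since $V$ is bounded, and consider the quotient Hilbert space $H_0/N$. Then $V$ induces an injective bounded linear map $\widetilde V: H_0/N \to F$, and I would define
\begin{align*}
H \equalDef V(H_0) = \widetilde V(H_0/N) \subset F,
\end{align*}
equipped with the norm transported from $H_0/N$, namely $\snorm{f}_H \equalDef \inf\{\snorm{h}_{H_0} : V(h) = f\}$ for $f\in H$. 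By construction $H$ is a Hilbert space of functions $X\to \R$, and the identity $\widetilde V:H\to F$ satisfies $\snorm{f}_F \leq \snorm{V}\cdot \snorm{f}_H$ for all $f\in H$, so $H\hookrightarrow F$.

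The three things left to verify are (i) $E\subset H$ with continuity, (ii) $H$ is actually an RKHS, and (iii) the claimed continuity of both inclusions. For (i), every $f\in E$ satisfies $f = V(U(f)) \in V(H_0) = H$, and the identity $\snorm{f}_H \leq \snorm{U(f)}_{H_0} \leq \snorm{U}\cdot\snorm{f}_E$ gives $E\hookrightarrow H$. For (ii), since $F$ is a proper BSF, the evaluation functional $\delta_x:F\to \R$ is continuous, and the composition $\delta_x\circ(\id:H\to F)$ yields continuity of $\delta_x:H\to \R$, so $H$ is a proper BSF with Hilbert space norm, hence an RKHS. Claim (iii) is then just a restatement of the norm bounds already obtained.

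I expect no serious obstacle here: the only point that requires a moment of care is that $H$ must consist of genuine functions on $X$ rather than equivalence classes, which is exactly why I pass to $V(H_0)\subset F$ and transport the norm through the injection $\widetilde V$, rather than working abstractly with $H_0/N$. One should also note that although the factorization is through an abstract Hilbert space, nothing in the construction requires $H_0$ itself to be a space of functions, so the argument applies uniformly to any 2-factorization.
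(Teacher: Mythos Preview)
Your proof is correct and arrives at exactly the same RKHS as the paper, namely $H=\ran V$ equipped with the quotient norm $\snorm{f}_H=\inf\{\snorm{h}_{H_0}: Vh=f\}$. The only difference is presentational: the paper defines the feature map $\phi(x)=V^*\d_{x,F}$, builds the kernel $k(x,x')=\langle\phi(x),\phi(x')\rangle_{H_0}$, and invokes \cite[Thm.~4.21]{StCh08} to identify the resulting RKHS with $\ran V$, whereas you go directly via the quotient $H_0/\ker V$ and verify the RKHS property by pulling back point evaluations from $F$. Your route is slightly more elementary in that it avoids explicit reference to the fundamental theorem of RKHSs; the paper's route has the minor advantage of exhibiting the kernel of $H$ explicitly. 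Both arguments rely on $F$ being proper (you use it to inherit continuity of $\d_x$ on $H$; the paper uses it to form $V^*\d_{x,F}$), which is the standing assumption throughout the paper.
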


In the following,
we will derive both sufficient and necessary conditions
that are in many cases easier to check than an abstract 2-factorization. In particular,
our sufficient conditions make it possible to avoid an explicit construction of a 2-factorization, while our necessary
conditions can be used to show that such a construction is impossible.
\donee
We begin with  two sufficient conditions.

\begin{theorem}\label{thm:suff-2-fact}
Let $E$ and $F$ be  BFSs on some set $X$ with $E\subset F$. Then  there exists an RKHS $H$ over $X$ such that
	\begin{align*}
		E\hookrightarrow H\hookrightarrow F
	\end{align*}
if one of the following two conditions are satisfied:
\begin{enumerate}
\item The inclusion map $\id: E\to F$ is 2-summing.
\item $E$ is of type 2 and $F$ is of cotype 2.
\end{enumerate}
\end{theorem}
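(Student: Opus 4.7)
The plan is to reduce both parts to showing that the inclusion map $\id:E\to F$ is 2-factorable, and then to invoke classical factorization theorems from the theory of operator ideals. The reduction itself is immediate from \cref{theorem:2-fac_in_bfs}: once we have a 2-factorization $\id = V \circ U$ through a Hilbert space, that theorem automatically manufactures an RKHS $H$ sandwiched between $E$ and $F$. So both items reduce to a purely abstract Banach space statement about the inclusion map, with no further BSF-specific reasoning required.

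For item \emph{(i)}, I would appeal to Pietsch's factorization theorem for 2-summing operators, see \cite[Ch.~2]{DiJaTo95}. The standard consequence is that every 2-summing operator $T:E\to F$ admits a factorization of the form $T = V \circ j \circ U$, where $U:E \to C(K)$ is an isometric embedding into a space of continuous functions on a compact $K$, $j:C(K) \to L_2(\mu)$ is the natural inclusion with respect to some Pietsch measure $\mu$, and $V$ is a bounded operator defined on the closure of $j \circ U(E)$ in $L_2(\mu)$. Since $L_2(\mu)$ is a Hilbert space, composing yields a 2-factorization of $\id:E\to F$ through $L_2(\mu)$.

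For item \emph{(ii)}, I would cite the Maurey--Pisier factorization theorem, see e.g.\ \cite[Thm.~31.6]{DiJaTo95} or \cite{Pisier86}: every bounded linear operator from a Banach space of type 2 into a Banach space of cotype 2 is 2-factorable, with $\twofacnorm{\id} \leq C \cdot \typenorm{\id_E} \cdot \cotypenorm{\id_F} \cdot \snorm{\id:E\to F}$ for an absolute constant $C$. Applying this with the continuous inclusion $\id:E\to F$ (whose continuity is automatic by \cref{lem:inclusion-is-continuous}) produces the required 2-factorization.

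The main obstacle in this proof is essentially bookkeeping: both conclusions rest on deep but well-documented results from operator ideal theory, and the only work on our side is (a) recognizing that our inclusion map satisfies the hypotheses of each theorem, and (b) correctly translating the resulting abstract 2-factorization into a concrete RKHS via \cref{theorem:2-fac_in_bfs}. No direct construction of the Hilbert space or verification of reproducing kernel properties is needed at this stage, since \cref{theorem:2-fac_in_bfs} has already absorbed that step.
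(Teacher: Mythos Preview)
Your proposal is correct and follows essentially the same route as the paper: both items are reduced to 2-factorability of $\id:E\to F$ via classical operator-ideal theorems (Pietsch's factorization for \emph{(i)}, and the type~2/cotype~2 factorization result for \emph{(ii)}), after which \cref{theorem:2-fac_in_bfs} finishes the job. The only cosmetic difference is that the paper attributes \emph{(ii)} to Kwapie\'n's theorem \cite[Thm.~12.19]{DiJaTo95} rather than to Maurey--Pisier, and cites \cite[Cor.~2.16]{DiJaTo95} for \emph{(i)}; your reference \cite[Thm.~31.6]{DiJaTo95} does not exist in that book, so you should double-check the citation.
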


In view of \emph{i)} recall that 2-summing, or more generally, $p$-summing operators have been extensively investigated in the literature, 
see e.g.~\cite{DiJaTo95}. In particular, is is known that 
\donee
for certain pairs of Banach spaces $E$ and $F$, every bounded linear operator
$A:E\to F$ is 2-summing. Indeed, if, for example
$E$ is a $\ca L_1$-space and $F$ is
a $\ca L_2$-space, see e.g.~\cite[p.~60]{DiJaTo95} for a definition,
then one can show with the help of Grothendieck's
inequality that every bounded linear operator $A:E\to F$ is 1-summing,
see e.g.~\cite[Thm.~3.1]{DiJaTo95}, and therefore also 2-summing, see
e.g.~\cite[Thm.~2.8]{DiJaTo95}.
Moreover, every Hilbert space is an $\ca L_2$-space, and since
the 2-summing operators form an operator ideal, we   directly obtain
the following result, which shows that \emph{i)} of \cref{thm:suff-2-fact}
is sharp in some cases.

\begin{lemma}
Let $E$ and $F$ be  BFSs on some set $X$ with $E\subset F$. If $E$ is an $\ca L_1$-space
and
there exists an RKHS $H$ over $X$ with
$E\subset  H\subset  F$, then the inclusion map $\id:E\to F$ is 2-summing.
\end{lemma}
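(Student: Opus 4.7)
The plan is to use the given factorization $E \hookrightarrow H \hookrightarrow F$ and apply Grothendieck's theorem to the first leg, then use the ideal property of 2-summing operators to conclude.

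First, I would establish that the inclusions $\id: E \to H$ and $\id: H \to F$ are continuous. Since $E$, $H$, and $F$ are all proper BSFs over the same set $X$ with $E\subset H \subset F$, the already-cited Lemma \ref{lem:inclusion-is-continuous} (the closed graph-type argument that makes inclusions between proper BSFs automatically bounded) gives $E \hookrightarrow H \hookrightarrow F$. So we have a factorization
\begin{equation*}
\id_{E\to F} \;=\; \bigl(\id_{H\to F}\bigr) \circ \bigl(\id_{E\to H}\bigr),
\end{equation*}
with both factors bounded linear operators.

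Next, I would observe that $H$, being a Hilbert space, is an $\calL_2$-space in the sense of \cite[p.~60]{DiJaTo95}. Hence $\id: E \to H$ is a bounded linear operator from an $\calL_1$-space into an $\calL_2$-space. By Grothendieck's theorem in the form given in \cite[Thm.~3.1]{DiJaTo95}, every such operator is 1-summing. By \cite[Thm.~2.8]{DiJaTo95}, 1-summing operators are automatically 2-summing, so $\id: E \to H$ is 2-summing.

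Finally, I would invoke the ideal property of the class of 2-summing operators: composing a 2-summing operator on the right with the bounded operator $\id: H \to F$ yields a 2-summing operator. Therefore $\id : E \to F$ is 2-summing, which is exactly the conclusion. The argument is essentially a two-line composition once the right classical theorems are cited, so there is no real obstacle; the only point that might require a brief justification is the automatic continuity of the inclusions, which has already been handled in the paper.
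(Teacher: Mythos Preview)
Your proof is correct and follows exactly the argument the paper has in mind: the paper does not give a formal proof of this lemma at all, but the sentence immediately preceding it (``every Hilbert space is an $\ca L_2$-space, and since the 2-summing operators form an operator ideal, we directly obtain the following result'') is precisely your argument in compressed form. You have merely made explicit the continuity of the inclusions via Lemma~\ref{lem:inclusion-is-continuous}, which the paper leaves implicit.
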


We note that a similar result holds, if $E$ is an $\ca L_\infty$-space
thanks to another application of Grothendieck's inequality, see e.g.~\cite[Thm.~3.7]{DiJaTo95}.
In addition, if $E$ is a subspace of an $\ca L_p$-space for some $p\in [1,2]$, then
every bounded linear $A:E\to F$ that is
$q$-summing for some $q>2$ is also  $2$-summing. For such $E$,
we can thus replace \emph{i)} of \cref{thm:suff-2-fact} by the $q$-summability of
$\id: E\to F$. Similarly, if $E$ is an $\ca L_p$-space for some $p\in [2,\infty]$
and $1/q \geq 1/2 - 1/p$, then every $q$-summing operator $A:E\to F$ is 2-factorizing, see
e.g.~\cite[p.~168]{DiJaTo95}.

The assumption in \emph{ii)} of \cref{thm:suff-2-fact} can be slightly relaxed. Indeed, if both $E'$ and $F$ 
have cotype 2 and one of these spaces has the so-called approximation property, then \cite{Pisier80a}, see also \cite[Thm.~4.1]{Pisier86}
has shown that every bounded linear operator $A:E\to F$ is 2-factorable. 
Here we also refer to \cite[Thm.~8.17]{Pisier86} 
for another relaxation.
Moreover, \cite[Thm.~3.4]{Pisier86} shows that every operator $A$ that has a factorization $A= VU$, where $U$ is of type 2 and $V$
is of cotype 2, is 2-factorable,  and since Hilbert spaces are of type 2 and cotype 2, the converse implication is obviously also true,   using the ideal property of type 2 and cotype 2.  
In this direction we also note that \cite{Leung90a} 
has found examples of Banach spaces $E$ and $F$ such that neither $E'$ nor $F$ have cotype 2, 
but every bounded linear operator $A:E\to F$ is 2-factorable. However,
these spaces are not 
BSFs. In addition, the embeddings $\ell_1\hookrightarrow \ell_2\hookrightarrow \ell_\infty$ show that these (co)type 2 assumptions on $E$ and $F$ 
are in general \emph{not} necessary for a positive answer to our Question \eqref{eq:central_question}. Moreover,  our  
results on H\"older spaces, see  
\cref{thm:abstract_holder_twofac_thm} and \cref{lemma:besov_c0_twofac}, show that in the absence of (co)type 2 assumptions on $E$ and $F$  
both positive and negative results are possible even for natural embeddings within the same scale of spaces. 
In contrast, the following straightforward corollary shows that the type 2 and cotype 2 of $\id:E\to F$
is a necessary condition for
$E\subset  H\subset  F$.
 
\donee

\begin{corollary}\label{cor:nec-2-fact}
Let $E$ and $F$ be  BFSs on some set $X$ with $E\subset F$. If there exists an RKHS $H$ over $X$ with
$E\subset  H\subset  F$, then the inclusion map $\id:E\to F$ is of both type 2 and cotype 2.
\end{corollary}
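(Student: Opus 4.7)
The plan is very short because both needed tools have already been established in the excerpt. First, I would apply \cref{theorem:2-fac_in_bfs} to the assumed inclusion chain $E\subset H\subset F$. The hypothesis of that theorem requires the inclusion map $\id:E\to F$ to be continuous in the first place; this is not an issue because $E$ and $F$ are both proper BSFs and inclusions between proper BSFs are automatically continuous (Lemma \ref{lem:inclusion-is-continuous}), so we genuinely have $E\hookrightarrow F$. The conclusion of \cref{theorem:2-fac_in_bfs} then tells us that this inclusion map is 2-factorable in the sense of \cref{definition:two-fac}.

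Second, I would invoke \cref{lem:2fact-implies-twotype}, which asserts that every 2-factorable bounded linear operator between Banach spaces is of both type 2 and cotype 2. Applying this to the operator $\id:E\to F$, just shown to be 2-factorable, yields the claim.

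In brief, the chain of implications is
\begin{center}
\begin{tikzcd}
E\subset H\subset F \arrow[r, Rightarrow] & \id:E\to F \text{ is 2-factorable} \arrow[r, Rightarrow] & \id:E\to F \text{ has type 2 and cotype 2},
\end{tikzcd}
\end{center}
where the first arrow is \cref{theorem:2-fac_in_bfs} and the second is \cref{lem:2fact-implies-twotype}. There is no genuine obstacle here; the entire content of the corollary is packaged inside the two cited results. If one wished to be slightly more self-contained, one could also note explicitly that a 2-factorization $\id = V\circ U$ through a Hilbert space $K$ gives the type 2 and cotype 2 estimates via the ideal inequalities \eqref{eq:type-2-ideal-norm} and \eqref{eq:cotype-2-ideal-norm}, together with the fact that Hilbert spaces themselves have type 2 and cotype 2, yielding
\begin{align*}
\oldtypenorm{\id:E\to F}{2}{n} \leq \snorm{V}\,\twofacnorm{\id_K}\,\snorm{U}\qquad \text{and}\qquad \oldcotypenorm{\id:E\to F}{2}{n} \leq \snorm{V}\,\twofacnorm{\id_K}\,\snorm{U},
\end{align*}
both of which are finite.
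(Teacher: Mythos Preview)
Your proof is correct and follows exactly the same two-step route as the paper: invoke \cref{theorem:2-fac_in_bfs} to obtain 2-factorability of $\id:E\to F$, then apply \cref{lem:2fact-implies-twotype}. One small slip in your optional self-contained addendum: the middle factor in the ideal inequalities should be $\typenorm{\id_K}$ (respectively $\cotypenorm{\id_K}$), not $\twofacnorm{\id_K}$; but this does not affect the main argument.
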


\done{}

\section{Applications to Various Concrete Spaces}\label{sec:examples}

We have seen in Theorem \ref{theorem:2-fac_in_bfs} that our Question \eqref{eq:central_question} is characterized by the 2-factorability of the embedding $E\hookrightarrow F$. 
In this section, we thus investigate the 2-factorability of embeddings $E_\theta\hookrightarrow F_{\theta'}$ in various parameterized families of spaces of ``functions'', 
establishing both \emph{positive} and \emph{negative results}. The positive results explicitly state 2-factorizations given sufficiently well-behaved parameters $\theta,\theta'$, which
we  straightforwardly derive from known embedding theorems. 
Negative results are derived from investigating type 2 and cotype 2, which are required for 2-factorability as stated in \cref{lem:2fact-implies-twotype}.

\donee

\subsection{Warm-up:  \texorpdfstring{$\ell_p$-spaces and $L_p$-spaces}{LG}}

In this subsection, we quickly investigate inclusions between $\ell_p$-spaces, respectively $L_p$-spaces. 
Here, our focus lies on explaining the common strategy for deriving negative results rather than on 
deriving particularly interesting new results. This common strategy will then applied to more interesting scenarios in the subsequent 
subsections.

We begin with the most simple example of a family of spaces for which we investigate 2-factorability, namely $\ell_p$-spaces.
Recall that these spaces are proper BSFs, and hence the following lemma also precisely answers our Question \eqref{eq:central_question} for this 
family of BSFs.

\begin{lemma}\label{lemma:analyzing_lp_spaces}
	Let $1\le p\le q\le \infty$. Then the embedding 
	\begin{align*}
		\ell_p\naturalsequence \hookrightarrow \ell_q\naturalsequence
	\end{align*}
	is 2-factorable if and only if  $p\le 2\le q$. In this case it 2-factorizes over the RKHS $\ell_2$ by
	$\ell_p\naturalsequence\hookrightarrow \ell_2\naturalsequence\hookrightarrow \ell_q\naturalsequence$.

\end{lemma}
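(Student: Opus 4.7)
The plan is to treat the two directions separately, using \cref{lem:2fact-implies-twotype} as the main tool for the negative direction.

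For the ``if'' direction, I would simply observe that the classical embeddings $\ell_p\hookrightarrow\ell_2\hookrightarrow\ell_q$ both hold with norm at most $1$ whenever $p\le 2\le q$, and that their composition is precisely the inclusion $\ell_p\hookrightarrow\ell_q$. Since $\ell_2$ is an RKHS, this already realises the desired 2-factorisation through a Hilbert space.

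For the ``only if'' direction, I would assume that $A\equalDef\id:\ell_p\to\ell_q$ is 2-factorable. By \cref{lem:2fact-implies-twotype} the operator $A$ is then simultaneously of type 2 and cotype 2, and I plan to test these two properties on the first $n$ standard unit vectors $e_1,\dots,e_n$. These have $\ell_r$-norm $1$ in every $\ell_r$, and for any signs $\varepsilon_1,\dots,\varepsilon_n\in\{-1,+1\}$ the sum $\sum_{i=1}^n\varepsilon_i e_i$ has $\ell_r$-norm exactly $n^{1/r}$. Consequently the Rademacher and sequence norms evaluate to $\radnorm{(e_1,\dots,e_n)}n{\ell_r}=n^{1/r}$ and $\bigl\|(e_1,\dots,e_n)\bigr\|_{\ell_2^n(\ell_p)}=n^{1/2}$, respectively.

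Substituting these values into the definitions \eqref{eq:def_typenorm} and \eqref{eq:def_cotypenorm} of the type 2 and cotype 2 constants of $A$ will yield
\begin{align*}
n^{1/q}\le\oldtypenorm A 2 n\cdot n^{1/2}\qquad\text{and}\qquad n^{1/2}\le\oldcotypenorm A 2 n\cdot n^{1/p}
\end{align*}
for every $n\in\N$. Letting $n\to\infty$, these inequalities force $q\ge 2$ and $p\le 2$, respectively, completing the proof. I do not expect any substantial obstacle: the only step that requires care is the choice of test sequence, and the standard basis diagonalises both the sequence norm and the Rademacher norms simultaneously, exhibiting the failure of type 2 when $q<2$ and of cotype 2 when $p>2$ in parallel.
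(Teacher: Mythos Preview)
Your proof is correct and follows essentially the same approach as the paper: both directions are handled identically, with the sufficiency being the trivial factorisation through $\ell_2$ and the necessity obtained by testing the type~2 and cotype~2 constants of $\id:\ell_p\to\ell_q$ on the standard unit vectors $e_1,\dots,e_n$, exploiting that $\bigl\|\sum_i\varepsilon_i e_i\bigr\|_{\ell_r}=n^{1/r}$ independently of the signs. Your assignment of which property forces which inequality (type~2 $\Rightarrow q\ge 2$, cotype~2 $\Rightarrow p\le 2$) matches the definitions~\eqref{eq:def_typenorm} and~\eqref{eq:def_cotypenorm} precisely.
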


\begin{proof}
    Clearly, we only 
 need to show that $p\le 2\le q $ is a necessary requirement for 2-factorability.
    To this end, let $e_1,e_2,\ldots \in \ell_p$ be 
	  the sequence of unit vectors.
	For $n\geq 1$  we then have
	\begin{align*}
		\radnorm{(e_1,\dots,e_n)}{n}{\ell_p\naturalsequence}
		&= \E_{\radsequence{\varepsilon}}
		\norm[\big] {
		\isum \varepsilon_i e_i }
_{\ell_p\naturalsequence} 
		= n^{1/p}
		\nonumber
\intertext{and}
		\elltwonorm{(e_1,\dots,e_n)}{n}{\ell_q\naturalsequence}&= \bra[\Big]{\isum \norm{e_i}_{\ell_q\naturalsequence}^2 }^{1/2} =n^{1/2}.
	\end{align*}
	Now, if $\ell_p\naturalsequence\hookrightarrow \ell_q\naturalsequence$ is 2-factorable, then it is of type 2 by \cref{lem:2fact-implies-twotype}, and hence we obtain 
	\begin{align*}
		\pretypenorm{\ell_p\naturalsequence\hookrightarrow \ell_q\naturalsequence}{2}{n} 
		\ge \sup_{(x_1,\dots,x_n)\in \ell_\pp\naturalsequence^n\setminus\set 0} \frac{\elltwonorm{(x_1,\dots,x_n)}{n}{\ell_q\naturalsequence} }{\radnorm{(x_1,\dots,x_n)}{n}{\ell_p\naturalsequence}}
		\ge \frac{\elltwonorm{(e_1,\dots,e_n)}{n}{\ell_q\naturalsequence} }{\radnorm{(e_1,\dots,e_n)}{n}{\ell_p\naturalsequence}}
		=  n^{1/2-1/p}
	\end{align*}
	for all $n\geq 1$. Since $\pretypenorm{\ell_p\naturalsequence\hookrightarrow \ell_q\naturalsequence}{2}{n}<\infty, $
	this implies $p\leq 2$.
Using $\precotypenorm{\ell_p\naturalsequence\hookrightarrow \ell_q\naturalsequence}{2}{n}  < \infty$
we analogously obtain   $q\ge 2$.
\end{proof}

The next lemma investigates $L_p$-spaces, where for the sake of simplicity we only consider the domain $[0,1]^d$ and the Lebesgue 
measure. Here we note that these spaces are, of course, not BSFs, but the common strategy becomes more visible 
than in the previous example. 

\begin{lemma}\label{lemma:analyzing_Lp_spaces}
	Let $\Om\equalDef[0,1]^d$ and $1\le \qq\le \pp\le\infty$. Then, the embedding $\embe{\lpspace \pp\Om}{\lpspace \qq\Om }$ is 2-factorable if and only if $\qq\leq 2\leq \pp$. In this case we have $\lpspace \pp\Om\hookrightarrow\lpspace2 \Om\hookrightarrow\lpspace \qq\Om$.
\end{lemma}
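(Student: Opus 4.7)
The plan is to mirror the strategy used in the proof of \cref{lemma:analyzing_lp_spaces}: supply the required 2-factorization via an explicit intermediate Hilbert space, and rule out the remaining parameter range by exhibiting test functions that violate the type 2 or cotype 2 inequality. The sufficiency direction is immediate: if $\qq\le 2\le \pp$, then finiteness of the Lebesgue measure on $\Omega$ and the standard $L_r$-inclusions yield $\lpspace \pp \Omega \hookrightarrow \lpspace 2 \Omega \hookrightarrow \lpspace \qq \Omega$, and since $\lpspace 2 \Omega$ is a Hilbert space this chain is itself a 2-factorization of $\id:\lpspace \pp \Omega \to \lpspace \qq \Omega$.

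For the necessity direction, I would assume that the embedding is 2-factorable and invoke \cref{lem:2fact-implies-twotype} to conclude that it is both of type 2 and cotype 2. The core step is to produce a test sequence that simultaneously probes both parameters. Given $n\in\N$, I would partition $\Omega=[0,1]^d$ into measurable sets $A_1,\dots,A_n$ of equal Lebesgue measure $1/n$ and set $f_i \equalDef \bbone_{A_i}$. A direct computation then gives
\begin{align*}
\elltwonorm{(f_1,\dots,f_n)}{n}{\lpspace r \Omega} = n^{1/2-1/r} \qquad \text{for every }r\in[1,\infty],
\end{align*}
while the disjointness of the $A_i$ combined with $\bigcup_i A_i = \Omega$ forces $|\sum_i \varepsilon_i f_i|\equiv \bbone_\Omega$ pointwise for every choice of signs $\varepsilon_i\in\{-1,+1\}$, so that $\radnorm{(f_1,\dots,f_n)}{n}{\lpspace r \Omega} = 1$ for every $r\in[1,\infty]$.

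Inserting these values into the defining inequalities for type 2 and cotype 2 of $\id:\lpspace \pp \Omega\to\lpspace \qq \Omega$ yields $1 \le \pretypenorm{\id}{2}{n}\cdot n^{1/2-1/\pp}$ and $n^{1/2-1/\qq} \le \precotypenorm{\id}{2}{n}$, respectively. The first can hold uniformly in $n$ only if $\pp\ge 2$, and the second only if $\qq\le 2$, which together give the required necessary condition. I do not expect any genuine obstacle here: constructing the partition is entirely elementary and the computations are strictly parallel to the sequence-space argument of \cref{lemma:analyzing_lp_spaces}, with indicator functions of a uniform partition playing the role of coordinate unit vectors.
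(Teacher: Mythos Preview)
Your proposal is correct and follows essentially the same approach as the paper: both test the type~2 and cotype~2 of the embedding against indicator functions of a measurable partition of $\Omega$ into equal-measure pieces. The only cosmetic difference is that the paper uses the $n^d$ dyadic subcubes of side $1/n$, whereas you use an arbitrary partition into $n$ sets of measure $1/n$; either choice yields the same asymptotics and the same conclusion.
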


\begin{proof}
	Again, we only need to show that 2-factorability implies 
	$\pp\ge 2\ge \qq$. To this end, let $n\in\N$ and $m\equalDef n^d$. Moreover, let $A_1,\dots, A_{m}$ be the partition of $[0,1]^d$ into $m$ cubes of side length $1/n$.
	Then, we obtain
	\begin{align*}
		\radnorm{(\mathds 1_{A_1},\dots,\mathds 1_{A_{m}}) }{m}{\lpspace{\pp}{\Om}}
		&= \E_{\radsequence{\varepsilon}}\norm[\big] {\sum_{i=1}^{m}\varepsilon_i \mathds 1_{A_i} }_{\lpspace \pp\Om} 
		= 1
		\intertext{and}
		\nonumber
		\elltwonorm{(\mathds 1_{A_1},\dots,\mathds 1_{A_{m}}) }{m}{\lpspace{\qq}{\Om}}&= \bra[\bigg]{\sum_{i=1}^{m}\norm{\mathds 1_{A_i}}_{\lpspace{\qq}{\Om}}^2 }^{1/2} =n^{d(1/2-1/\qq)}.
	\end{align*}
	Now, if $\embe{\lpspace \pp\Om}{\lpspace \qq\Om }$ is 2-factorable, then it is of type 2 by \cref{lem:2fact-implies-twotype}, and hence we obtain 
	\begin{align*}
		\typenorm{\embe{\lpspace \pp\Om}{\lpspace \qq\Om }}
		\ge& \frac{
			\elltwonorm{(\mathds 1_{A_1},\dots,\mathds 1_{A_{m}}) }{m}{\lpspace{\qq}{\Om}}
			}{
			\radnorm{(\mathds 1_{A_1},\dots,\mathds 1_{A_{m}}) }{m}{\lpspace{\pp}{\Om}}
			}
		 = n^{d(1/2-1/\qq)}.
	\end{align*}
	for all $n\geq 1$. 
	Since $\typenorm{\embe{\lpspace \pp\Om}{\lpspace \qq\Om }}<\infty$, this implies  $\qq\leq 2$. 
	Analogously, we obtain   $\pp\ge 2$ using $\cotypenorm{\embe{\lpspace \pp\Om}{\lpspace \qq\Om }}<\infty$.
\end{proof} 

It is easy to generalize \cref{lemma:analyzing_Lp_spaces} to atom-free finite measure spaces $(\Om,\mu)$. Since \cref{lemma:analyzing_Lp_spaces}
is mostly stated to illustrate our common proof strategy, we omit the details.

\donee

\subsection{Spaces of H\"older Continuous Functions}\label{subsection:abstact_holder}

Let $(X,d)$ be a bounded metric space. 
In this section, we establish necessary requirements for the existence of an RKHS $H$ on $X$ that can be squeezed in between two fixed
H\"older spaces over $(X,\met)$, that is
\begin{align*}
	\mholspace{X}{\al}{\met}\hookrightarrow H \hookrightarrow \mholspace{X}{\be}{\met} \, ,
\end{align*}
where $0<\be\le \al \le 1$.
Here, our   result  will show that the existence of such an RKHS $H$
implies an upper asymptotic bound on the growth of the  packing numbers $\pac \d \met X$ of $(X,\met)$.
Moreover, we obtain a similar result if we replace $\mholspace{X}{\be}{\met}$ by $\ell_\infty(X)$.

Let us begin by recalling the definition of these spaces. To this end, 
	we denote the $\al$-H\"older norm of a function $f:X\to \R$ by
	\begin{align}
		\label{definition:holder_abstract_metric}
		\mholnorm{f}{\al}{\met}\equalDef 
		\max
		\left\{  
		\sup_{x\ne y\in X} \frac{\abs{f(x)-f(y)}}{\met^\al(x,y)}  , \sup_{x\in X} \abs{f(x)}
		\right\} \, ,
	\end{align}
	where   $\al\in (0,1]$.
	Clearly, we have $\mholnorm{f}{\al}{\met}<\infty$ if and only if $f$ is both $\a$-H\"older continuous and bounded.
	Moreover, we define the H\"older spaces by
	\begin{align*}
		\mholspace{X}{\al}{\met}\equalDef \{ f:X\to \R \mid \mholnorm f \al \met <\infty \} \, .
	\end{align*}
	It is well-known and easy to verify that $(\mholspace{X}{\al}{\met},\mholnorm{\mycdot}{\al}{\met})$ is a Banach space.
	In addition, since H\"older spaces consist of bounded functions,
	we can quickly check that
	$\mholspace{X}{\al}{\met}\hookrightarrow  \mholspace{X}{\be}{\met}$
	holds for all $0<\be\le \al \le 1$.
	Moreover, another routine check shows that it is also a proper BSF. By
	\cref{theorem:2-fac_in_bfs}, the existence of an intermediate RKHS $H$ is therefore
	equivalent to the 2-factorability of an embedding $\mholspace{X}{\al}{\met}\hookrightarrow \mholspace{X}{\be}{\met}$.

Finally, let us recall that the diameter of a metric space $(X,\met)$ is $\diam X \equalDef \sup_{s,t\in X}\met(s,t)>0$.
With these preparations our result reads as follows.

\begin{restatable}{theorem}{coveringnumberrestriction}\label{thm:abstract_holder_twofac_thm}
	Let $(X,\met)$ be a metric space and $0<\be\le \al \le 1$. If $X$ is connected and there exists an RKHS $H$ over $X$ with
	\begin{align}\label{eq:double-hoelder}
		\mholspace X \al \met \hookrightarrow H \hookrightarrow \mholspace X \be \met \, ,
	\end{align}
	then there exists a constant $C>0$ such that for all $0<\d< \min\{1, \diam X\}$ we have
	\begin{align}
		\label{eq:holder_packing_asymptotics}\pac\d {\met}{X}  \le  C\d^{-2(\al-\be)} \, .
	\end{align}
	Moreover, if there exists an RKHS $H$ over $X$ such that 
	\begin{align}\label{eq:hoelder-infty}
		\mholspace X \al \met \hookrightarrow H \hookrightarrow \ell_\infty(X) \, ,
	\end{align}
	then there exists a constant $C>0$, such that \eqref{eq:holder_packing_asymptotics} holds for $\b\equalDef 0$ and all $0<\d< \min\{1, \diam X\}$.
\end{restatable}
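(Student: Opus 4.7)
The plan is to invoke the cotype 2 necessary condition from \cref{cor:nec-2-fact} (existence of an intermediate RKHS forces the inclusion map to be 2-factorable, hence of cotype 2 by \cref{lem:2fact-implies-twotype}) and to mimic the template of \cref{lemma:analyzing_lp_spaces} and \cref{lemma:analyzing_Lp_spaces}: I construct disjointly supported Hölder bumps centred at the points of a maximal $\delta$-packing, and exhibit that the resulting $\ell_2$-sequence norm in the target grows with $n$ while the Rademacher norm in the source stays uniformly bounded.

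Fix $0<\delta<\min\{1,\diam X\}$, set $n\equalDef\pac{\delta}{\met}{X}$, and pick a $\delta$-packing $x_1,\dots,x_n\in X$. For each $i$ define
\[
f_i(x)\equalDef\pospart{(\delta/3)^{\al}-\met(x,x_i)^{\al}}\, ,\qquad x\in X.
\]
The open supports are contained in $\{y:\met(y,x_i)<\delta/3\}$, are pairwise disjoint, and any two points in distinct supports are at distance at least $\delta/3$. The elementary estimate $|s^{\al}-t^{\al}|\le|s-t|^{\al}$ for $s,t\ge 0$ combined with the reverse triangle inequality yields $|f_i(x)-f_i(y)|\le\met(x,y)^{\al}$ for all $x,y\in X$, which gives $\mholnorm{f_i}{\al}{\met}\le 1$ for $\delta\le 1$. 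A short case analysis (both points in one support; points in different supports, balancing the pointwise estimate with $f_i(x)+f_j(y)\le 2(\delta/3)^{\al}$ and the $\delta/3$-separation of supports; one point outside all supports) then produces a uniform bound
\[
\mholnorm{\Sum_{i=1}^{n}\varepsilon_i f_i}{\al}{\met}\le C_1
\]
independent of $n$, $\delta$ and the signs $\varepsilon_i\in\{-1,1\}$, whence $\radnorm{(f_1,\dots,f_n)}{n}{\mholspace{X}{\al}{\met}}\le C_1$.

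Next I lower-bound the target norms, and this is where the connectedness assumption enters. Since $y\mapsto\met(x_i,y)$ is continuous with connected image in $\R_{\ge 0}$ containing $0$ and some value strictly greater than $\delta/3$ (because $\diam X>\delta$), the intermediate value theorem furnishes a point $y_i\in X$ with $\met(x_i,y_i)=\delta/3$. Using $f_i(x_i)=(\delta/3)^{\al}$ and $f_i(y_i)=0$, the pair $(x_i,y_i)$ yields
\[
\mholnorm{f_i}{\be}{\met}\ge\frac{|f_i(x_i)-f_i(y_i)|}{\met(x_i,y_i)^{\be}}=(\delta/3)^{\al-\be}.
\]
By \cref{theorem:2-fac_in_bfs} and \cref{lem:2fact-implies-twotype}, the inclusion $A\colon\mholspace{X}{\al}{\met}\hookrightarrow\mholspace{X}{\be}{\met}$ is of cotype 2, so the cotype 2 inequality applied to $(f_1,\dots,f_n)$ gives
\[
\sqrt{n}\cdot(\delta/3)^{\al-\be}\le\Bigl(\Sum_{i=1}^{n}\mholnorm{f_i}{\be}{\met}^{2}\Bigr)^{1/2}\le\oldcotypenorm{A}{2}{n}\cdot C_1,
\]
which rearranges to $n\le C\delta^{-2(\al-\be)}$ and proves \eqref{eq:holder_packing_asymptotics}. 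The same test functions handle the $\ell_\infty$ variant verbatim: since $\|f_i\|_\infty=(\delta/3)^{\al}$, cotype 2 of $\mholspace{X}{\al}{\met}\hookrightarrow\ell_\infty(X)$ yields $\sqrt{n}\,(\delta/3)^{\al}\le\oldcotypenorm{A}{2}{n}\cdot C_1$, i.e.\ \eqref{eq:holder_packing_asymptotics} with $\be\equalDef 0$.

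The main technical obstacle is establishing the uniform bound $\mholnorm{\sum_i\varepsilon_i f_i}{\al}{\met}\le C_1$: distinct supports are separated only by $\delta/3$ rather than by a strict buffer comparable to $\delta$, so a Hölder quotient $|g(x)-g(y)|/\met(x,y)^{\al}$ with $g=\sum_i\varepsilon_i f_i$ cannot simply be reduced to a single $f_i$; one has to interpolate between the pointwise estimate $|f_i(x)-f_i(y)|\le\met(x,y)^{\al}$ and the sup-norm bound $(\delta/3)^{\al}$. Everything else, including the lower bound witnessed by $(x_i,y_i)$ and the final rearrangement, is a direct computation from the defining formulas.
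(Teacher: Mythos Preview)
Your approach is essentially the same as the paper's: a cotype~2 argument with disjointly supported H\"older bumps at the points of a packing, connectedness supplying a witness point for the lower bound in the $\beta$-H\"older norm, and the $\ell_\infty$ case handled by the sup-norm alone. The only cosmetic difference is that the paper passes to the metric $\met^{\al}$ and uses a $3\delta$-packing there (reducing the H\"older estimate to a Lipschitz one), whereas you stay in $\met$ with a $\delta$-packing and radius-$\delta/3$ bumps; your case analysis does go through with $C_1=2$ (in the cross-support case $|g(x)-g(y)|\le 2(\delta/3)^{\al}$ and $\met(x,y)\ge\delta/3$), so the ``main technical obstacle'' you flag is in fact a routine verification.
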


Let us consider a  bounded $X\subset \Rd$ with non-empty   interior.
By \cref{thm:abstract_holder_twofac_thm} and the packing number bound
\eqref{eq:cov-num-Rd}
we conclude that there is no RKHS $H$ satisfying \eqref{eq:double-hoelder}
if $\al-\be < d/2$.
Moreover, there is no RKHS $H$ satisfying \eqref{eq:hoelder-infty} if $\al< d/2$. 
Hence, \eqref{eq:central_question} is infeasible for $d> 2$ within the family of H\"older spaces.
We observe that the difference of the smoothness between the H\"older spaces in \eqref{eq:double-hoelder} 
needs to grow proportional to the underlying dimension in order to enable \eqref{eq:central_question}.

In \cref{subsubsection:isotropic_spaces}, \eqref{eq:triebelid-holder} will generalize H\"older spaces over $X$, allowing arbitrarily large non-integer smoothness. In \cref{thm:two_factorisability_of_general_triebel_besov_embedds} we will show that if $\al-\be >d/2$, then there exists an intermediate RKHS $H$ such that we have
\begin{align*}
	\holspace{\al}{X}\hookrightarrow H \hookrightarrow \holspace{\be}{X} \, 
\end{align*}
and in \cref{lemma:besov_c0_twofac} we will show that for $\al>d/2$ there exists an  intermediate RKHS $H$ such that
\begin{align*}
	\holspace{\al}{X}\hookrightarrow H \hookrightarrow C^0(X)
\end{align*}
holds.
Hence, the bounds derived in \cref{thm:abstract_holder_twofac_thm} are tight up to the border cases $\al-\be =d/2$, respectively $\al=d/2$.
For general metric spaces $(X,d)$ however, we do not know any condition that implies the existence of an RKHS $H$ satisfying
\eqref{eq:double-hoelder} or \eqref{eq:hoelder-infty}.

The requirement that $X$ is a \emph{bounded} subset of $\Rd$ is crucial to obtain positive answers to \eqref{eq:central_question}, as the following theorem shows.

\begin{theorem}\label{lem:unbounded_domain}
	Let $\Om\subset\Rd$ be open  and unbounded.
	Then there exists no RKHS $H$ on $\Om$ with bounded kernel such that $C^\infty(\Om) \subset H$.
\end{theorem}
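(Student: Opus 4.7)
The plan is to derive a direct contradiction from the bounded-kernel assumption by exhibiting an explicit unbounded function in $C^\infty(\Omega)$. Recall that, as noted in Section \ref{sec:prelim}, an RKHS $H$ on $\Omega$ has a bounded kernel if and only if every element of $H$ is bounded, i.e.\ $H\subset \ell_\infty(\Omega)$. Consequently, any $H$ as in the statement would satisfy
\[
    C^\infty(\Omega)\subset H\subset \ell_\infty(\Omega),
\]
so it suffices to show that $C^\infty(\Omega)\not\subset\ell_\infty(\Omega)$ whenever $\Omega$ is unbounded.

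First I would define $f:\mathbb{R}^d\to\mathbb{R}$ by $f(x):=\lVert x\rVert_2^2=\sum_{i=1}^d x_i^2$. This is a polynomial, hence smooth on all of $\mathbb{R}^d$, and its restriction to $\Omega$ therefore lies in $C^\infty(\Omega)$. Next I would invoke unboundedness of $\Omega$: by definition there exists a sequence $(x_n)\subset\Omega$ with $\lVert x_n\rVert_2\to\infty$, so $f(x_n)=\lVert x_n\rVert_2^2\to\infty$ and $f\notin\ell_\infty(\Omega)$. This contradicts the displayed inclusion, proving that no such RKHS $H$ can exist.

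There is essentially no obstacle here; the only point worth double-checking is that the standard characterization ``kernel bounded $\Leftrightarrow$ $H\subset\ell_\infty(\Omega)$'' recalled from \cite[Lem.~4.23]{StCh08} is being applied in the right direction, and that a single explicit smooth function like $\lVert x\rVert_2^2$ is already enough—no restriction-to-a-subset argument via \cref{prop:restricted-inclusions} or packing-number bound via \cref{thm:abstract_holder_twofac_thm} is needed, since the obstruction appears already at the level of elementary boundedness.
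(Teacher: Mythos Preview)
Your argument has a genuine gap: it rests on a misreading of the paper's definition of $C^\infty(\Omega)$. Immediately following the statement of \cref{lem:unbounded_domain}, the paper specifies that $C^\infty(\Omega)$ is the space of smooth functions equipped with the norm $\norm{f}_{C^\infty(\Omega)}=\sup_{\alpha\in\N_0^d}\supnorm{\partial_\alpha f}$. In particular, taking $\alpha=0$ shows that every $f\in C^\infty(\Omega)$ already satisfies $\supnorm{f}<\infty$, so $C^\infty(\Omega)\subset\ell_\infty(\Omega)$ holds automatically. Your proposed function $f(x)=\lVert x\rVert_2^2$ is unbounded on an unbounded $\Omega$ and hence does \emph{not} belong to $C^\infty(\Omega)$ in the paper's sense; the contradiction you derive never gets off the ground.

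The actual obstruction is more subtle and genuinely requires the cotype machinery. The paper takes an infinite $3$-packing $x_1,x_2,\ldots\in\Omega$ (available since $\Omega$ is unbounded), places translated copies $f_i(x)=f(x-x_i)$ of the compactly supported bump \eqref{eq:bump_function} at those points, and computes that $\radnorm{(f_1,\dots,f_n)}{n}{C^\infty(\Omega)}$ stays bounded while $\elltwonorm{(f_1,\dots,f_n)}{n}{\ell_\infty(\Omega)}=n^{1/2}$. This forces $\cotypenorm{\embe{C^\infty(\Omega)}{\ell_\infty(\Omega)}}=\infty$, and \cref{cor:nec-2-fact} then rules out any intermediate RKHS. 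The point is that the embedding $C^\infty(\Omega)\hookrightarrow\ell_\infty(\Omega)$ exists but fails to be $2$-factorable; your approach of denying the embedding itself cannot work here.
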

By $C^\infty(\Om)$ we denote the space of smooth functions, given by the norm $\norm f_{C^\infty(\Om)}=\sup_{\al\in\N_0^d} \supnorm{\partial_\al f}$.

\subsection{Spaces of (Generalized) Mixed Smoothness}\label{subsubsection:sobolev_spaces_of_mixed_smoothness}

In this section we investigate our Question \ref{eq:central_question} for a scale of spaces that 
generalize both \emph{classical Sobolev spaces}
and \emph{Sobolev spaces of mixed smoothness}, see e.g.~\cite{AdFo03}, respectively  \cite{Sloan98}.
Since some of these spaces are not BSFs, we will focus on the 2-factorability of embeddings between such spaces, where
we recall that by \cref{theorem:2-fac_in_bfs} 2-factorability is equivalent to our initial Question \ref{eq:central_question} if the involved spaces are BSFs.

To introduce these spaces, we fix a finite, non-empty  set $A$ of multi-indices, i.e.~$\A\subset \N_0^d$. Then we say that $A$ is 
{coherent}, if for all $\a\in A$ and $\b\in \N_0^d$ with $\b\leq \a$ we have $\b\in A$, where $\be\le\al$ denotes the usual element-wise partial order.
Moreover,  we write $\munorm{\A}\equalDef \max\{\munorm\al:\al\in\A\}$, where 
 $|\a|_1 \equalDef \a_1+\dots+\a_d$ denotes  the sum of the entries of $\a\in \N_0^d$.

Given a bounded domain 
 $\Om\subset\Rd$  and a coherent $\A\subset \N_0^d$, we now define the set of $A$-times weakly differentiable functions by 
\begin{align}
	\label{eq:sobolev_auxspace}
	\preslospace{\A}{\Om}
	\equalDef \{ \fun\in \calL_0(\Om) \mid \text{the weak derivative }\partial_\al \fun \text{ exists for all }\al\in\A \}\, ,
\end{align}
where $\calL_0(\Om)$ denotes the space of all measurable $f:\Om\to \R$.
Moreover, we define the \emph{Sobolev space of generalized mixed smoothness $\A$}  as
$$
\slospace{\A}{\pp}{\Om}\equalDef
\{\fun\in\preslospace{\A}{\Om} : \slonorm \fun \A\pp\Om <\infty  \},
$$ 
$\text{ where the \emph{mixed Sobolev norm }of $\fun$ is given by }$
\begin{align}
\label{eq:mixed_sob_norm}
\slonorm\fun{\A}{\pp}{\Om}\equalDef \sup_{\al\in\A} \lpnorm{\partial_\al \fun}{\pp}{\Om}.
\end{align}
Note that for $s\in \N_0$ and $\A\equalDef\{\al\in\N_0^d: \munorm{\al}\le s\}$ the spaces $\slospace{\A}{\pp}{\Om}$ equal the classical Sobolev spaces $H^s_p(\Om)$
of integer smoothness $s$
modulo an equivalent norm. Here we note that  both our initial Question \ref{eq:central_question} and 2-factorability are invariant with respect to equivalent norms, and therefore 
these small differences between $\slospace{\A}{\pp}{\Om}$ and $H^s_p(\Om)$ do not affect our results.
Finally note that we obviously have $\munorm\A=s$.

If, for a fixed $s\in \N_0$, we define $\A\equalDef\{ \al\in\N_0^d: \al_1,\dots,\al_d \le s \}$, then $W^{\ss,\text{mix}}_\pp (\Om)\equalDef \slospace{\A}{\pp}{\Om}$ equals a
Sobolev space of mixed smoothness $s$, where again the latter spaces may have an equivalent norm.
Note that for such $A$ we have $\munorm\A=sd$.

In the following we  provide necessary parameter requirements for 2-factorability of embeddings between Sobolev spaces of generalized  mixed smoothness. 

\donee

\begin{theorem}\label{thm:mixed_twofac}
	Let $\Om\subset\R^d$ 
	be a bounded domain,
	 $\A,\B \subset \N_0^d$ be coherent sets, and $1\le \pone,\ptwo <\infty$ such that
	 the embedding $\embe{\slospace \A\pone\Om}{\slospace\B\ptwo\Om}$ exists.
	 We write  $\ss\equalDef \munorm\A$ and $\tt\equalDef\munorm \B$, and further assume that $\ss-\tt\ge d(1/\pone-1/\ptwo)$ holds.
	If the embedding $\embe{\slospace \A\pone\Om}{\slospace\B\ptwo\Om}$ is 2-factorable, then we have
	\begin{align}\label{thm:mixed_twofac-par-cond}
		\ss-\tt \ge \cone+\ctwo.
	\end{align}
\end{theorem}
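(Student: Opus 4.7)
The approach follows the template of the warm-up \cref{lemma:analyzing_lp_spaces} and \cref{lemma:analyzing_Lp_spaces}: by \cref{lem:2fact-implies-twotype}, 2-factorability of $\id:\slospace{\A}{\pone}{\Om}\to\slospace{\B}{\ptwo}{\Om}$ forces this inclusion to be of both type 2 and cotype 2, and the plan is to exhibit a disjointly supported, rescaled sequence of bump functions that converts each of these two abstract properties into a concrete lower bound on $s-t$.

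Since $\Om$ is a bounded domain, it contains an open cube. For each $n\in\N$, I partition a fixed sub-cube into $m\equalDef n^d$ congruent axis-aligned cubes of side length proportional to $1/n$. Pick a non-zero $\phi\in C_c^\infty(\R^d)$ supported in $[0,1]^d$ such that $\partial_\al\phi\not\equiv 0$ for every multi-index $\al\in\A\cup\B$ of maximal length in the respective set, and let $\phi_1,\ldots,\phi_m\in C_c^\infty(\Om)$ be the corresponding rescaled translates placed into the $m$ sub-cubes, so that the supports are pairwise disjoint. The elementary scaling $\lpnorm{\partial_\al\phi_i}{\pp}{\Om}\asymp n^{\munorm\al-d/\pp}$ (for $1\le\pp<\infty$), together with disjointness of supports (which linearises Rademacher sums in $L_\pp$ exactly as in the warm-ups), then yields
\begin{align*}
\slonorm{\phi_i}{\A}{\pone}{\Om}&\asymp n^{s-d/\pone}, &
\slonorm{\phi_i}{\B}{\ptwo}{\Om}&\asymp n^{t-d/\ptwo}, \\
\radnorm{(\phi_1,\ldots,\phi_m)}{m}{\slospace{\A}{\pone}{\Om}}&\asymp m^{1/\pone}n^{s-d/\pone}=n^{s}, &
\radnorm{(\phi_1,\ldots,\phi_m)}{m}{\slospace{\B}{\ptwo}{\Om}}&\asymp n^{t}, \\
\elltwonorm{(\phi_1,\ldots,\phi_m)}{m}{\slospace{\A}{\pone}{\Om}}&\asymp n^{s+d/2-d/\pone}, &
\elltwonorm{(\phi_1,\ldots,\phi_m)}{m}{\slospace{\B}{\ptwo}{\Om}}&\asymp n^{t+d/2-d/\ptwo}.
\end{align*}

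Plugging $(\phi_1,\ldots,\phi_m)$ into the type~2 inequality for $\id:\slospace{\A}{\pone}{\Om}\to\slospace{\B}{\ptwo}{\Om}$ gives $n^{t}\lesssim n^{s+d/2-d/\pone}$ uniformly in $n$, whence $s-t\ge d/\pone-d/2$; this is the content of $s-t\ge\cone$ when $\pone<2$ and is automatic otherwise. Dually, the cotype~2 inequality gives $n^{t+d/2-d/\ptwo}\lesssim n^{s}$, i.e.\ $s-t\ge d/2-d/\ptwo=\ctwo$ when $\ptwo>2$, and is automatic otherwise.

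It remains to combine these two bounds. Note that $\cone>0$ forces $\pone<2$ and $\ctwo>0$ forces $\ptwo>2$, so at most one of them can be positive unless $\pone<2<\ptwo$. If $\cone\ctwo=0$, then $\cone+\ctwo=\max\{\cone,\ctwo\}$ and the pertinent inequality above already yields $s-t\ge\cone+\ctwo$. Otherwise $\pone<2<\ptwo$ and one computes $\cone+\ctwo=d/\pone-d/\ptwo=d(1/\pone-1/\ptwo)$, so the conclusion coincides with the standing hypothesis $s-t\ge d(1/\pone-1/\ptwo)$. The main technical point in implementing this plan is the choice of $\phi$: to ensure that the asymptotics $\slonorm{\phi_i}{\A}{\pone}{\Om}\asymp n^{s-d/\pone}$ and $\slonorm{\phi_i}{\B}{\ptwo}{\Om}\asymp n^{t-d/\ptwo}$ are genuinely governed by the top-degree multi-indices, one needs the coherence of $\A$ and $\B$ together with a mild non-degeneracy condition on the derivatives of $\phi$; this is a minor explicit construction rather than a conceptual obstacle.
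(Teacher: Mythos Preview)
Your proposal is correct and follows essentially the same approach as the paper's proof: both use a family of disjointly supported rescaled bumps (you via a dyadic cube partition, the paper via a $3\delta$-packing of a ball), compute the resulting Rademacher and $\ell_2$ sequence norms in $\slospace{\A}{\pone}{\Om}$ and $\slospace{\B}{\ptwo}{\Om}$, extract $s-t\ge d/\pone-d/2$ from type~2 and $s-t\ge d/2-d/\ptwo$ from cotype~2, and then combine via the case distinction $\cone\ctwo=0$ versus $\cone+\ctwo=d(1/\pone-1/\ptwo)$. The non-degeneracy issue you flag is handled in the paper by choosing the specific radial bump \eqref{eq:bump_function}, for which $\partial_\al f\not\equiv 0$ for \emph{every} multi-index $\al$.
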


Recall that 
for classical Sobolev spaces $\sobspace \ss \pone \Om$ and $\sobspace \tt\ptwo\Om$ of integer smoothness with  $\ss\ge \tt$,
it is asserted in \cite[p.~82, Remark]{RuSi96}, see also the discussion in \cite[p.~108ff]{AdFo03},
that the embedding $\embe{\sobspace \ss \pone \Om}{\sobspace \tt\ptwo\Om}$ 
exists if and only if $\ss-\tt\ge d/\pone-d/\ptwo$. 
Moreover, recall that for these spaces we have $\sobspace \ss p \Om = \slospace \A p\Om$, where $\A\equalDef\{\al\in\N_0^d: \munorm{\al}\le s\}$.
Since this gives $|A|_1=s$, \cref{thm:mixed_twofac} shows that if $\embe{\sobspace \ss \pone \Om}{\sobspace \tt\ptwo\Om}$ is 2-factorable
then \eqref{thm:mixed_twofac-par-cond} holds. 

In contrast, if we  consider Sobolev spaces of mixed smoothness $s$ respectively $t$ as introduced above, then the 2-factorability of
the embedding
$W^{\ss,\text{mix}}_{\pone} (\Om)\hookrightarrow W^{t,\text{mix}}_{\ptwo} (\Om)$
implies 
\begin{align*}
s-t \geq \pospart{1/\pone - 1/2} + \pospart{1/2 - 1/\ptwo}
\end{align*}
since the considered coherent sets $A$ and $B$ are of size  $\munorm\A=sd$ and  $\munorm\B=td$. 
Note that compared to classical Sobolev spaces, the dimension no longer appears in the necessary condition.

The next theorem investigates the case in which $\slospace\B\ptwo\Om$ is replaced by 
$\ell_\infty(\Om)$, that is we consider  $\embe{\slospace\A\pp\Om}  {\ell_\infty(\Om)}$, where the embedding 
is understood in the usual sense of Sobolev's embedding theorem.
Informally speaking this complements with the case
\cref{thm:mixed_twofac} with $B=\{0\}$ and $\ptwo = \infty$.

\begin{theorem}\label{thm:mixed_twofac_C-0}
	Let $\Om\subset\Rd$ be a bounded domain,  $\A\subset \N_0^d$ be a coherent set, and  $1\le \pp< \infty$ such that the
	embedding  $\embe{\slospace\A\pp\Om}  {\ell_\infty(\Om)}$ exists. We define $\ss\equalDef \munorm\A$ and assume $\ss\ge d/\pp$.
	If there exists an RKHS $H$ with 
	 ${\slospace\A\pp\Om} \subset H\subset {\ell_\infty(\Om)}$, then we have
	\begin{align*}
		\ss \ge \czero +d/2 \, .
	\end{align*}
\end{theorem}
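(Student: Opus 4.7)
My plan is to adapt the strategy used for the warm-up lemmas and for \cref{thm:mixed_twofac}: convert the existence of the intermediate RKHS $H$ into an analytic property of the inclusion $\id : \slospace{A}{p}{\Om} \to \ell_\infty(\Om)$, and then test that property against a carefully chosen finite sequence of bump functions. Concretely, by \cref{theorem:2-fac_in_bfs} the existence of $H$ is equivalent to the 2-factorability of this inclusion, and \cref{lem:2fact-implies-twotype} then yields that the inclusion is of cotype 2.

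For the test sequence, fix a closed cube $Q_0 \subset \Om$, which exists since $\Om$ is a bounded domain. For each $n \in \N$, subdivide $Q_0$ into $N \equalDef n^d$ pairwise disjoint sub-cubes $Q_i$ of side length $\asymp 1/n$. Choose a nontrivial $\varphi \in C_c^\infty(\R^d)$ supported in the unit cube with $\snorm{\partial_\alpha \varphi}_{L_p} > 0$ for all $\alpha \in A$, and let $\varphi_i$ be the translated and rescaled copy supported in $Q_i$. Since the $\varphi_i$ have pairwise disjoint supports, the usual bump computation (as in the proof of \cref{lemma:analyzing_Lp_spaces}) gives
\begin{equation*}
\elltwonorm{(\varphi_1,\dots,\varphi_N)}{N}{\ell_\infty(\Om)} = N^{1/2}\,\snorm{\varphi}_\infty = n^{d/2}\,\snorm{\varphi}_\infty,
\end{equation*}
while for every sign pattern $\varepsilon \in \set{-1,+1}^N$ and every $\alpha \in A$, disjointness of supports combined with the scaling $\snorm{\partial_\alpha \varphi_i}_{L_p} = n^{|\alpha|_1 - d/p}\,\snorm{\partial_\alpha \varphi}_{L_p}$ yields
\begin{equation*}
\bnorm{\partial_\alpha \sum_{i=1}^N \varepsilon_i \varphi_i}_{L_p(\Om)} = N^{1/p}\, n^{|\alpha|_1 - d/p}\,\snorm{\partial_\alpha \varphi}_{L_p}.
\end{equation*}
Taking the supremum over $\alpha \in A$ and using $\munorm{A} = s$, the right-hand side is $\asymp n^{d/p} \cdot n^{s - d/p} = n^s$ for $n$ large. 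Since this estimate is independent of $\varepsilon$, we conclude that $\radnorm{(\varphi_1,\dots,\varphi_N)}{N}{\slospace{A}{p}{\Om}} \asymp n^s$.

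Plugging these two estimates into the cotype 2 inequality for the inclusion gives $n^{d/2} \lesssim n^s$ uniformly in $n$, and hence $s \ge d/2$. Together with the hypothesis $s \ge d/p$ this yields $s \ge \max(d/p, d/2) = \pospart{d/p - d/2} + d/2 = \czero + d/2$, which is the desired bound.

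I do not expect a serious technical obstacle. The only subtlety is recognizing that the cotype 2 argument alone produces merely $s \ge d/2$; in the regime $p \le 2$ the full conclusion $s \ge d/p$ comes from the hypothesis itself (equivalently, it is just the classical Sobolev embedding threshold), so cotype 2 is only sharpening the bound in the genuinely new case $p > 2$. This matches the intuition that the present theorem is the $\B = \set 0$, $\ptwo = \infty$ analogue of \cref{thm:mixed_twofac}, where the $\ctwo$-term contributes $d/2$ and the $\cone$-term is absorbed into the embedding hypothesis.
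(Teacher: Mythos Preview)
Your proposal is correct and follows essentially the same approach as the paper: disjointly supported rescaled bump functions, combined with the cotype~2 bound from \cref{lem:2fact-implies-twotype}, yield $s \ge d/2$, and the remaining case $p<2$ is handled by the standing hypothesis $s \ge d/p$. The paper additionally runs a type~2 argument to obtain $s \ge (d/p - d/2)_+$, but, as you observed, this is redundant---it is subsumed by the hypothesis when $p<2$ and trivial when $p\ge 2$---so your version is slightly more streamlined.
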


\donee

	We can generalize \cref{thm:mixed_twofac} to \emph{mixed fractional smoothness} in the style of \emph{Slobodeckij spaces}, which we will use in \cref{def:slob_spaces}. A space $\slospace Ap\Om$ of mixed fractional smoothness is given by an integration index $p\ge 1$ and a \emph{coherent set of mixed smoothness} $A$. The elements $(\al,\th)$ of $\A$ consist of a multi-index $\al\in \N_0^d$ which ensures the coherent structure of $A$, and a parameter  $\th\in [0,1)$ that refines the smoothness of the derivative $\partial_\al$ to a fractional smoothness as in \eqref{eq:slobodeckij_core}.
	Defining $\munorm A\equalDef \max\{ \munorm\al +\th \mid (\al,\th) \in A\}$ we obtain analogously to \cref{thm:mixed_twofac} that for an embedding $\slospace \A \pone \Om \hookrightarrow \slospace \B \ptwo\Om$ the inequality 
	\begin{align*}
		\munorm\A-\munorm\B \ge \pospart{d/\pone -d/2} +\pospart{d/2 -d/\ptwo}
	\end{align*}
	is a necessary requirement for 2-factorability. For  Slobodeckij spaces  this result is sharp, as we will show in \cref{lemma:slobotwofac}.

\subsection{Embedding Integral Reproducing Kernel Banach Spaces into RKHSs}\label{subsubsection:RKBS}
Consider a single hidden layer neural network $f:\Rd\to\R$ with $n$ hidden neurons, that is, $f$ is of the form 
\begin{align*}
	f(x) = \sum_{i=1}^n \alpha_i  \sigma(\scal{x}{w_i} + b_i),
\end{align*}
where $\sigma:\R\to\R$ is an activation function, $w_i\in\R^d$ are weight vectors and $b_i\in\R$ are biases. This network's outputs can be interpreted as integrals by observing that for the  discrete 
signed measure 
\begin{align*}
	\mu = \sum_{i=1}^n \alpha_i \delta_{(w_i,b_i)}
\end{align*}
 on $\R^{d+1}$
the identity
\begin{align*}
	f(x) = \int_{\R^{d+1}} \sigma\bra{ \scal{(x,1)}{(w,b)}_{\R^{d+1}} } \dx \mu{(w,b)}
\end{align*}
holds for all $x\in \Rd$, where we write $(w,b)\in \R^{d+1}$ for the vector obtained by appending $b\in\R$ to $w\in\Rd$.
\donee
This perspective motivates the theory of \emph{integral reproducing kernel Banach spaces} (IRKBS), which can be used to describe such networks and their limits, see e.g.~\cite[Section 3.5]{BaDVRoVi23a}.

In this section we investigate Question \eqref{eq:central_question} for IRKBS, which are a  certain class of proper BSF.
Let us begin by recalling the definition of IRKBS.

\begin{definition}\label{def:irkbs}
Let $(X,\calA)$ be a measurable space,  $\Psi:X\times X \to\R$ be a measurable function, and $\calM$ be a Banach space of finite  signed  measures on $X$.
We call a  bounded and measurable function $\beta:X\to \R$ a  normalizing function of $\Psi$, if
\begin{align}
	\label{eq:smoothed_activation_bounded}
	\sup_{y\in X } \abs{  \Psi(x,y)\beta (y)} <\infty
\end{align}
holds for all $x\in X$. In this case, we define $f_{\mu,\Psi,\beta}:X\to \R$ by \donee
\begin{align}
	\label{eq:generalized_kernel_mean_embedding}
	f_{\mu,\Psi,\beta}(x) \equalDef \int_X  \Psi(x,y) \beta(y)\dx \mu(y)\, , \qquad\qquad x\in X
\end{align}
for each $\mu \in \calM$. Then the set $E_{\calM,\Psi,\beta}\equalDef \{f_{\mu,\Psi,\beta}\mid \mu\in\calM\}$ equipped with the norm 
\begin{align*}
\norm{f_\mu}_{E_{\calM,\Psi,\beta}}\equalDef \inf\{\norm\nu_{\calM}\mid \nu\in\calM, f_\nu=f_\mu\}
\end{align*}
is called an integral reproducing kernel Banach space.
\end{definition}

In the following, 
$\calM(X)$  denotes the space of finite signed measures on $X$ equipped with the total variation (TV) norm.
The next result, which is variant of \cite[Prop.~3.3]{BaDVRoVi23a}, shows that under natural conditions
$E_{\calM,\Psi,\beta}$ becomes a proper BSF.

\begin{proposition}\label{prop:irkbs}
	Let $X$ be a measurable space, $\Psi:X\times X\to\R$ be a measurable function and $\beta:X\to\R$ be a normalizing function of $\Psi$.
	Let $\calM$ be a Banach space of finite  signed  measures on $X$ with  $\calM \hookrightarrow \calM(X)$.
	Then, $( E_{\calM,\Psi,\beta} ,\norm\mycdot_{E_{\calM,\Psi,\beta}})$ is a proper BSF.
\end{proposition}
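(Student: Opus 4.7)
The plan is to realize $E_{\calM,\Psi,\beta}$ as a quotient of $\calM$ by the kernel of the natural linear map $T\colon \calM \to \R^X$ defined by $T\mu \equalDef f_{\mu,\Psi,\beta}$, equipped with the quotient norm, and to verify that the quotient construction produces a proper BSF on $X$.

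First I would verify that $T$ is well-defined and linear. For each fixed $x \in X$, the function $y \mapsto \Psi(x,y)\beta(y)$ is measurable (as a product of measurable functions) and, by the normalizing condition \eqref{eq:smoothed_activation_bounded}, bounded by some finite constant $c_x$. Since every $\mu \in \calM$ has finite total variation thanks to $\calM \hookrightarrow \calM(X)$, the integral in \eqref{eq:generalized_kernel_mean_embedding} is well-defined for every $x$ and satisfies the pointwise estimate
\begin{equation*}
\abs{(T\mu)(x)} \le c_x \norm{\mu}_{\TV} \le c_x C_\calM \norm{\mu}_{\calM},
\end{equation*}
where $C_\calM \equalDef \norm{\id\colon \calM \to \calM(X)}$. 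Linearity of $T$ follows from linearity of the integral.

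Next I would use the above estimate to observe that each point-evaluation functional $\delta_x \circ T\colon \calM \to \R$ is continuous. Consequently,
\begin{equation*}
\ker T = \bigcap_{x \in X} \ker(\delta_x \circ T)
\end{equation*}
is a closed subspace of the Banach space $\calM$, so the standard quotient construction turns $\calM/\ker T$ into a Banach space under the quotient norm $\norm{[\mu]} \equalDef \inf\{\norm{\nu}_\calM : \nu - \mu \in \ker T\}$. The map $T$ descends to an injective linear map $\bar T\colon \calM/\ker T \to \R^X$ whose image is exactly $E_{\calM,\Psi,\beta}$, and the definition of $\norm{\cdot}_{E_{\calM,\Psi,\beta}}$ agrees term-by-term with the quotient norm transported along $\bar T$. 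Hence $(E_{\calM,\Psi,\beta}, \norm{\cdot}_{E_{\calM,\Psi,\beta}})$ is isometrically isomorphic to $\calM/\ker T$, and in particular a Banach space whose elements are genuine functions on $X$.

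Finally, the proper BSF property reduces to continuity of point evaluations on $E_{\calM,\Psi,\beta}$. For any $f \in E_{\calM,\Psi,\beta}$ and any $\nu \in \calM$ with $f_\nu = f$, the pointwise bound above gives $\abs{f(x)} \le c_x C_\calM \norm{\nu}_\calM$; taking the infimum over all such $\nu$ yields $\abs{f(x)} \le c_x C_\calM \norm{f}_{E_{\calM,\Psi,\beta}}$, which is exactly continuity of $\delta_x$. I do not foresee a serious obstacle in this plan; the only point needing mild care is the check that \eqref{eq:generalized_kernel_mean_embedding} genuinely defines a pointwise function on $X$ rather than an equivalence class, which is built in because the integral is formed with a fixed integrand for each $x$ separately.
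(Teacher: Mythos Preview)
Your proposal is correct and follows essentially the same approach as the paper: both identify $E_{\calM,\Psi,\beta}$ with the quotient $\calM/\ker T$ via the map $\mu \mapsto f_{\mu,\Psi,\beta}$, verify that $\ker T = \bigcap_{x\in X}\ker(\d_x\circ T)$ is closed using the pointwise bound $|f_\mu(x)| \le c_x C_\calM \|\mu\|_\calM$, and deduce the proper BSF property from the same bound by taking the infimum over representatives. Your write-up is arguably a bit cleaner in making the map $T$ explicit from the outset.
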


In the following, we would like to reduce our considerations to the case $\b=1$ without loosing generality. To explain this, 
let $(X,\calA)$ be a measurable space, $\b:X\to \R$ be bounded and measurable, and  $\calM$ be a Banach space of  signed  measures on $X$.
For $\mu \in \calM$, we 
define a new finite, signed measure $\beta\mu$ on $X$ 
by 
\begin{align*}
(\beta\mu)(A)\equalDef\int_A \beta(y) \dx \mu(y)\, , \qquad\qquad A\in \calA.
\end{align*}
Moreover, we write $\beta\calM \equalDef\{ \beta\mu \mid \mu\in\calM\}$ and 
\begin{align*}
\norm{\mu}_{\beta\calM} \equalDef \inf\{ \norm \nu_{\calM} \mid \nu\in\calM, \mu =\beta\nu\} \, .
\end{align*}
It can be quickly verified, that $(\beta\calM, \norm{\cdot}_{\beta\calM})$ is a Banach space of finite signed measures. 
Moreover,  the embedding $\calM\hookrightarrow\calM(X)$ and the boundedness of $\beta:X\to\R$ imply $\beta\calM\hookrightarrow \calM(X)$.
Under the assumptions of \cref{prop:irkbs} we further have 
\begin{align}
	\label{eq:moving_smoothing_function_fun}f_{\mu,\Psi,\beta} &= f_{\beta\mu,\Psi,1}
\end{align}
and also %
\begin{align} \nonumber
	\norm{f_{\mu,\Psi,\beta}}_{E_{\calM,\Psi,\beta}}
	= \inf\{ \norm\nu_{\calM} \mid \nu\in\calM, f_{\nu,\Psi,\beta}=f_{\mu,\Psi,\beta}\}
	&= \inf\{ \norm\nu_{\calM} \mid \nu\in\calM, f_{\beta\nu,\Psi,1} =f_{\beta\mu,\Psi,1}\} \\ 	\nonumber
	&=\inf\{ \norm{\beta\nu}_{\beta\calM} \mid f_{\beta\nu,\Psi,1} =f_{\beta\mu,\Psi,1}\}\\ \label{eq:moving_smoothing_function_norm}
	&=
	\norm{f_{\beta\mu,\Psi,1}}_{E_{\beta\calM,\Psi,1}}.
\end{align} 
In other words, 
\eqref{eq:moving_smoothing_function_fun} and \eqref{eq:moving_smoothing_function_norm} make it possible to work with the constant normalizing function $\beta=1$ by adapting the considered Banach space of signed  measures. 

Having introduced IRKBSs, our next goal is to investigate under which assumptions these spaces can be embedded into an RKHS. 
To this end, we assume that $\Psi:X\times X\to\R$  is a symmetric function that can be written as a 
difference of   two kernels  $k_1$ and $k_2$ on $X$, that is
\begin{align*}
\Psi=k_1-k_2 \, .
\end{align*}
In the literature, this is 
 known as a \emph{positive decomposition} of $\Psi$ and it is discussed in e.g.~\cite{DaMoOr23} and the references mentioned therein. 
 Moreover, positively decomposable functions are the foundation of reproducing kernel Krein spaces (RKKSs), a generalization of RKHSs, see e.g.~\cite{OnMa04}. 

The following lemma investigates Question \eqref{eq:central_question} for IRKBS $E_{\calM,\Psi,1}$ in cases where $\Psi$ is positively decomposable.

\begin{lemma}\label{lem:difference_of_kernels}
	Let $X$ be a measurable space and $\calM$ be a Banach space of finite  signed  measures on $X$ with $\calM\hookrightarrow \calM(X)$.
	Moreover, let $k_1,k_2$ be measurable kernels on $X$ with RKHSs $H_1,H_2$, such that 
	 $\Psi\equalDef k_1-k_2$  is bounded. %
	Then, the IRKBS $E_{\calM,\Psi,1}$ is a proper BSF. 
	If, in addition, we have 
	\begin{align}\label{eq:requirement_rkhs_decomp}
		H_1,H_2\subset \calL_1(\mu) 
	\end{align}
	 for all $\mu\in\calM$,
	then the RKHS $H=H_1+H_2$ of the kernel $k\equalDef k_1+k_2$ fulfils 
	\begin{align*}
		E_{\calM,\Psi,1} \hookrightarrow H.
	\end{align*}
\end{lemma}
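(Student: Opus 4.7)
My plan is to split the argument along the two claims of the lemma. For the first claim, I would verify that the constant function $\beta\equiv 1$ is a normalizing function of $\Psi$ in the sense of \eqref{eq:smoothed_activation_bounded}: it is trivially bounded and measurable, and boundedness of $\Psi$ gives $\sup_{y\in X}|\Psi(x,y)|\le \supnorm\Psi<\infty$ for all $x\in X$. Then \cref{prop:irkbs} applies directly and yields that $E_{\calM,\Psi,1}$ is a proper BSF.

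For the embedding, I would first recall Aronszajn's sum-of-kernels theorem, which states that $H=H_1+H_2$ is the RKHS of $k=k_1+k_2$ and satisfies $\snorm{h_1+h_2}_H^2\le \snorm{h_1}_{H_1}^2+\snorm{h_2}_{H_2}^2$ for every admissible decomposition. To establish the set-theoretic inclusion $E_{\calM,\Psi,1}\subset H$, I would fix $\mu\in\calM$ and $i\in\{1,2\}$; the hypothesis $H_i\subset \calL_1(\mu)$ makes the inclusion $H_i\hookrightarrow L_1(|\mu|)$ well-defined, and the closed graph theorem forces it to be continuous (norm convergence in $H_i$ yields pointwise convergence by the reproducing property, and this must agree with the $|\mu|$-a.e.~limit of any $L_1(|\mu|)$-convergent subsequence). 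Thus $h\mapsto \int h\,d\mu$ is a bounded linear functional on $H_i$, and its Riesz representer $h_i^\mu\in H_i$ satisfies $h_i^\mu(x)=\langle h_i^\mu,k_i(x,\cdot)\rangle_{H_i} = \int k_i(x,y)\,d\mu(y)$. Splitting the integrand $\Psi=k_1-k_2$ under the integral, which is legitimate since each $k_i(x,\cdot)\in H_i\subset \calL_1(\mu)$, then yields $f_{\mu,\Psi,1}=h_1^\mu-h_2^\mu\in H_1+H_2=H$.

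The main obstacle is to show that the inclusion is \emph{continuous}, because the hypothesis $H_i\subset\calL_1(\mu)$ is purely set-theoretic and gives no uniform control as $\mu$ varies. My plan is to apply the closed graph theorem a second time, now to the linear map $T\colon\calM\to H$, $\mu\mapsto f_{\mu,\Psi,1}$, which is well-defined by the previous step. Suppose $\mu_n\to\mu$ in $\calM$ and $T\mu_n\to g$ in $H$. Using the boundedness of $\Psi$ together with $\calM\hookrightarrow \calM(X)$, I obtain for each $x\in X$ the pointwise estimate
\[
|f_{\mu_n,\Psi,1}(x)-f_{\mu,\Psi,1}(x)|\le \supnorm\Psi\cdot \tvnorm{\mu_n-\mu}\longrightarrow 0,
\]
so $T\mu_n\to f_{\mu,\Psi,1}$ pointwise, while norm convergence in the RKHS $H$ also implies pointwise convergence, forcing $g=f_{\mu,\Psi,1}=T\mu$. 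Hence $T$ is closed and therefore bounded. Taking the infimum over all $\mu\in\calM$ representing a given $f\in E_{\calM,\Psi,1}$ in the estimate $\snorm{f_{\mu,\Psi,1}}_H\le \snorm{T}\cdot \snorm{\mu}_\calM$ finally gives $\snorm{f}_H\le \snorm{T}\cdot \snorm{f}_{E_{\calM,\Psi,1}}$, which is the desired continuous embedding $E_{\calM,\Psi,1}\hookrightarrow H$.
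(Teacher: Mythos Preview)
Your argument is correct, and the first two steps match the paper almost verbatim: you invoke \cref{prop:irkbs} for the proper BSF claim, and your Riesz-representer construction of $h_i^\mu$ is exactly the Pettis-integral $\int k_i(\cdot,y)\,d\mu(y)\in H_i$ that the paper uses (citing \cite{StZi21a}) to obtain $f_{\mu,\Psi,1}\in H_1+H_2$.

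Where you diverge is in establishing continuity of the embedding. You run a second closed-graph argument on $T:\calM\to H$ and then pass to the infimum norm on $E_{\calM,\Psi,1}$. The paper bypasses all of this: once the set-theoretic inclusion $E_{\calM,\Psi,1}\subset H$ is known, it simply observes that both spaces are proper BSFs and invokes the general principle of \cref{lem:inclusion-is-continuous} (inclusion between proper BSFs is automatically continuous). Your route is self-contained and works, but you are essentially reproving a special case of that lemma; recognising that $E_{\calM,\Psi,1}$ and $H$ are both proper BSFs lets you conclude in one line.
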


An  easy way to check \eqref{eq:requirement_rkhs_decomp} is provided by 
\cite[Thm.~4.26]{StCh08}, which  shows that if $k$ is a measurable kernel on $X$ with RKHS $H$ and $\mu$  is a finite signed measure on $X$ with  
	\begin{align*}
			\norm k_{L_1(\mu)} \equalDef \int_X \sqrt{k(x,x)} \, d \abs \mu (x) <\infty \, ,
	\end{align*}
	where $\abs \mu \equalDef \mu_+-\mu_-$ denotes the total variation of $\mu$, then  $H\subset \calL_1(\mu)$ follows.  %
	
To adapt this to our situation, we first note that \eqref{eq:requirement_rkhs_decomp} is obviously equivalent to 
$H_1+H_2 \subset\calL_1(\mu)$ for all  $\mu\in\calM$. Since $k_1+k_2$ is the kernel of $H_1+H_2$, we thus see that 
\eqref{eq:requirement_rkhs_decomp} is satisfied if 
	\begin{align}
			\label{lem:rkhs_integrability-new}
			\int_X \sqrt{k_1(x,x) + k_2(x,x)} \dx \mu (x) <\infty  \, , \qquad \qquad \mu\in\calM.
		\end{align}
For example, if $k_1$ and $k_2$ are bounded, \eqref{lem:rkhs_integrability-new} holds, and thus also \eqref{eq:requirement_rkhs_decomp}. As a consequence, 
we then have the embeddings  $E_{\calM,\Psi,1} \hookrightarrow H_1+H_2 \hookrightarrow \ell_\infty(X)$.

To illustrate \cref{lem:difference_of_kernels} and its crucial assumption  \eqref{eq:requirement_rkhs_decomp} in a concrete setting, 
we fix a real sequence
 $(\lambda_i)_{i\ge 0}$ and consider the power series 
 \begin{align*}
 \sigma_+(t) & \equalDef \sum_{i=0}^\infty \pospart{\lambda_i} t^i  \, ,\\
 \sigma_-(t) &\equalDef \sum_{i=0}^\infty \negpart{\lambda_i} t^i
 \end{align*}
as well as their convergence radii $R_+$ and $R_-$. Moreover, we write 
 $R\equalDef\min\{R_+,R_-\}$ and assume $R>0$.

 For $X\subset \mathring{B}(0,\sqrt R)$ we define  $k_1, k_2: X\times X\to\R$ by $k_1(x,y)\equalDef \sigma_+(\scal xy)$ and $k_2(x,y)\equalDef \sigma_-(\scal xy)$. Then, $k_1$ and $k_2$ are  continuous and \cite[Lemma 4.8.]{StCh08} 
 shows that $k_1$ and $k_2$ are kernels on $X$ since all coefficients are non-negative.
We further define $\Psi:X\times X\to \R$ by $\Psi\equalDef k_1-k_2$, that is, 
\begin{align*}
\Psi(x,y)=\sum_{i=0}^\infty \lambda_i \scal xy^i \, , \qquad\qquad x,y\in X.
\end{align*}Let us assume for simplicity that $\Psi$ is bounded, so that we may
 choose $\beta \equiv 1$ as a normalizing function of $\Psi$. 

Then, 
\cref{lem:difference_of_kernels} shows that the IRKBS $E_{\calM(X),\Psi,1}$ is a proper BSF. 
To obtain a positive answer to Question \eqref{eq:central_question} from \cref{lem:difference_of_kernels}
it remains to check \eqref{eq:requirement_rkhs_decomp}.
Here we first note that the sufficient condition \eqref{lem:rkhs_integrability-new} for \eqref{eq:requirement_rkhs_decomp} reads as 
\begin{align*}
	\sqrt{\sum_{i=0}^\infty \abs{\lambda_i} \norm \mycdot^{2i}} \in \calL_1(\mu) \, , \qquad \qquad \mu\in\calM.
\end{align*}
Unfortunately, this condition is not necessary. To derive a necessary condition, we first note that \eqref{eq:requirement_rkhs_decomp}
implies $k_i(\mycdot,x)  \in \calL_1(\mu)$ for all $x\in X$, $\mu\in\calM$, and $i=1,2$. In our situation, the latter reads as 
\begin{align}
	\label{eq:integrability_kernels}
	\sum_{i=0}^\infty \abs{\lambda_i} \scal \mycdot x^i \in\calL_1(\mu)
\end{align}
for all $x\in X$ and  $\mu\in\calM$.

\secondd

\donee

Finally, the following example shows that the condition \eqref{eq:smoothed_activation_bounded} ensuring  that $E_{\calM,\Psi,1}$   is a proper BSF
is in general substantially weaker than the condition \eqref{eq:requirement_rkhs_decomp} ensuring a surrounding RKHS.

\begin{example}
	Let $X=\Rd$, $\sigma:\R\to \R$ be given by
	\begin{align*}
		\sigma(t) &= \cos(t) = \sum_{i=0}^\infty (-1)^i \frac{t^{2i}}{(2i)!} \, ,
	\end{align*}
	and $\Psi:X\times X\to\R$ be given by
	$		\Psi(x,y) =\sigma(\scal xy).
	$
	Then, $\Psi$ is bounded and for $\calM := \calM(X)$ we know by \cref{lem:difference_of_kernels} that 
	the IRKBS $E_{\calM,\Psi,1}$ is a proper BSF. Moreover, our discussion above shows that 
	a positive decomposition of $\Psi$ is given by the kernels
	\begin{align*}
	k_1(x,y)= \sum_{i=0}^\infty \frac {t^{4i}}{(4i)!} \qquad  \mbox{ and } \qquad  k_2(x,y)= \sum_{i=0}^\infty \frac{t^{4i+2}}{(4i+2)!}. 
	\end{align*}
	By \eqref{eq:integrability_kernels} we thus see that we need at least 
	\begin{align*}
		\cosh(\scal \mycdot x) 
		\in \calL_1(\mu)
	\end{align*}
	 for all $\mu\in \calM, x\in X$ in order to apply \cref{lem:difference_of_kernels}. %
	 Consequently, we can only find an RKHS $H$ with  $E_{\calM,\Psi,1}\hookrightarrow H$ 
	 with the help of \cref{lem:difference_of_kernels}, if we substantially reduce the space $\calM$ of 
	 considered measures.
\end{example}

\donee

\subsection{Triebel-Besov-Lizorkin spaces}\label{subsubsection:isotropic_spaces}

In this section we consider Triebel-Lizorkin spaces and Besov spaces. To quickly introduce those spaces, let
us fix a  bounded domain $\Om\subset\Rd$  with smooth boundary \cite[Sect.~2.4.1, Def.~1]{RuSi96}.
We denote
\emph{Triebel-Lizorkin spaces} by $\triebelspace{\ss}{\pp}{\qq}{\Om}$,
where $s\geq 0$ is the \emph{smoothness},
$p\in [1,\infty)$ is the \emph{integration index},
and $q\in [1,\infty]$  is the \emph{fein index}.
Moreover,
\emph{Besov spaces} are denoted as $\besovspace{\ss}{\pp}{\qq}{\Om}$ where $s\geq 0$ and $p,q\in [1,\infty]$.

In the following, we say that $\spaceone{\ss}{\pp}{\qq}{\Om}$ is a
Besov-Triebel-Lizorkin space, if either
$\spaceone{\ss}{\pp}{\qq}{\Om} = \triebelspace{\ss}{\pp}{\qq}{\Om}$
with $p<\infty$ or
$\spaceone{\ss}{\pp}{\qq}{\Om} = \besovspace{\ss}{\pp}{\qq}{\Om}$
with $p\in [1,\infty]$.

\donee

Recall from e.g.~\cite[Sect.~2.2.4]{RuSi96} that
for $\ss>d/\pp$, the spaces $\triebelspace\ss\pp\qq\Om$ and $\besovspace\ss\pp\qq\Om$ can be continuously embedded into $C^0(\Om)$ and
consequently, they are proper BSFs.  
Nonetheless,
we will also investigate the case $\ss\le d/\pp$, where we can still sharply answer the question of 2-factorability.
Finally, some additional  embedding theorems for Besov-Triebel-Lizorkin spaces are recalled in
\cref{theorem:triebels_embeddings}.

Many well known function spaces are special cases of Besov-Triebel-Lizorkin spaces
\cite[Prop.~2.3.5]{Triebel83}, especially the following identities hold up to norm-equivalence:

The fractional Sobolev spaces $\sobspace{\ss}{\pp}\Om$, which are sometimes also referred to as Bessel potential spaces, can be identified via
		\begin{align}
			\label{eq:triebelid-sob}\sobspace{\ss}{\pp}\Om= \triebelspace{\ss}{p}{2}\Om\, , \qquad\qquad s\geq 0, p\in (1,\infty).
		\end{align}
Here we recall that $\sobspace{\ss}{2}\Om$ is a \emph{Hilbert space} which can be quickly inferred from their definition with the help
of the Fourier transformation, see e.g.~\cite[p.~13]{RuSi96}. Combining this with the above mentioned embedding into $C^0(\Om)$,
the spaces $\sobspace{\ss}{2}\Om$ are RKHSs whenever
$\ss>d/2$.

For $s>0$ with $s\not \in\N$, the H\"older spaces $\holspace \ss\Om$ of
$\floor{\ss}$-times differentiable, bounded functions, for which all these derivatives are bounded and the derivatives of order $\floor{\ss}$ are  $\ss-\floor\ss$-H\"older continuous,
can be identified via
\begin{align}
	\label{eq:triebelid-holder}
\holspace{\ss}{\Om}= \besovspace \ss \infty \infty \Om.
\end{align}
Note that for $\ss\in (0,1)$, these spaces coincide with the H\"older spaces discussed in \cref{subsection:abstact_holder}.
In contrast to \cref{subsection:abstact_holder}, however, we can now also deal with  smoothness $s>1$.

For $\ss>0$ and $p\in [1,\infty)$,
 the Slobodeckij spaces $\slospace{\ss}{\pp}{\Om}$, whose definition is recalled in \cref{def:slob_spaces} can be identified via
		\begin{align}
			\label{eq:triebelid-slobo}
			\slospace{\ss}{\pp}{\Om} = \triebelspace{\ss}{\pp}{\qq} \Om,
			\qquad\qquad
			\text{ where }\qq=\begin{cases}
				2, &\text{ if $\ss\in\N$ and $p>1$,}\\
				\pp, & \text{ if $\ss\not\in \N$.}
			\end{cases}
		\end{align}

Besov spaces and Triebel-Lizorkin spaces coincide when their integration and fein index are equal, that is
\begin{align}
	\label{eq:triebel-besov-identity}
	\besovspace \ss \pp\pp\Om =\triebelspace \ss\pp\pp\Om, \qqqquad \pp<\infty.
\end{align}
This is the only scenario in which Besov spaces and Triebel-Lizorkin spaces are equal, see \cite[2.3.9]{Triebel83}.

\donee

\donee 

Our first result in this section investigates  2-factorability of embeddings between  Slobodeckij spaces. Modulo limiting cases, it
provides a characterization.

		\begin{restatable}{theorem}{slobotwofac}\label{lemma:slobotwofac}\label{corollary:parameter_reqs_in_slob_spaces}
			Let $\Omega\subset\R^d$ be a  bounded  domain with smooth boundary, and
			let $0\le \tt <\ss < \infty$ and $1\le \pone,\ptwo <\infty$. If $\ss\in\N$, let $\pone>1$ and if $\tt\in\N_0$, let $\ptwo>1$.
			Furthermore, let
			$\ss-\tt \ge d(1/\pone -1/\ptwo)$ hold.
			Then, the embedding $\slospace{\ss}{\pone}{\Om}\hookrightarrow \slospace{\tt}{\ptwo}{\Om}$
			exists and  the following statements hold true:
			\begin{enumerate}
			\item \textbf{Sufficient requirement.}
			If $\ss-\tt >\cone+\ctwo$,
			then the embedding 2-factorizes as
			\begin{align}\label{eq:two_factorising_slobo}
				\slospace \ss \pone\Om \hookrightarrow \sobspace{\uu}{2}{\Om} \hookrightarrow  \slospace \tt \ptwo \Om
			\end{align}
			for any $\uu\in (\tt+\ctwo,\ss-\cone)$.

			\item \textbf{Necessary requirement.}
			If the embedding
			$\slospace{\ss}{\pone}{\Om}\hookrightarrow \slospace{\tt}{\ptwo}{\Om}$ is 2-factorable, then we have
			\begin{align*}
				\ss-\tt \ge \cone+\ctwo.
			\end{align*}
			\end{enumerate}
		\end{restatable}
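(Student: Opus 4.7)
The plan is to prove the two parts independently. For the \textbf{sufficient direction}, I would construct an explicit 2-factorization through the fractional Sobolev Hilbert space $\sobspace{\uu}{2}{\Om}$ at an intermediate smoothness $\uu$. The assumption $\ss - \tt > \cone + \ctwo$ makes the interval $(\tt + \ctwo, \ss - \cone)$ non-empty, and for any $\uu$ inside it we have $\ss - \uu > \cone$ and $\uu - \tt > \ctwo$. Invoking the Slobodeckij-Triebel-Lizorkin identification \eqref{eq:triebelid-slobo} and the embedding theorems collected in \cref{theorem:triebels_embeddings}, these two gaps deliver the continuous embeddings $\slospace{\ss}{\pone}{\Om} \hookrightarrow \sobspace{\uu}{2}{\Om} \hookrightarrow \slospace{\tt}{\ptwo}{\Om}$. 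Composition then furnishes the required 2-factorization since $\sobspace{\uu}{2}{\Om}$ is a Hilbert space.

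For the \textbf{necessary direction}, I assume 2-factorability. By \cref{lem:2fact-implies-twotype} the embedding is both of type 2 and of cotype 2, so I would apply the bump-function strategy of \cref{lemma:analyzing_lp_spaces} and \cref{lemma:analyzing_Lp_spaces}. Fix a smooth bump $\varphi$ supported in $[0,1]^d$, set $\delta = 1/n$, $m = n^d$, and place scaled copies $\varphi_i(x)\equalDef \varphi((x-x_i)/\delta)$ at $x_i$ on a $\delta$-grid inside $\Om$. A direct scaling argument yields $\snorm{\varphi_i}_{\slospace{\ss}{\pp}{\Om}} \asymp \delta^{d/\pp - \ss}$ for $\pp \in\{\pone,\ptwo\}$, and because the $\varphi_i$ have disjoint supports, the diagonal contribution alone produces the lower bound $\radnorm{(\varphi_i)}{m}{\slospace{\ss}{\pp}{\Om}} \gtrsim m^{1/\pp}\delta^{d/\pp-\ss}$, while the sequence norm is exactly $m^{1/2}\delta^{d/\pp-\ss}$. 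Plugging these into the type 2 inequality for the embedding and choosing $m = \delta^{-d}$ forces $\delta^{\ss - \tt - (d/\pone - d/2)} \lesssim 1$ as $\delta \to 0$, i.e., $\ss - \tt \geq d/\pone - d/2$, hence $\ss - \tt \geq \cone$. The cotype 2 inequality, applied analogously but with a matching upper bound on the Rademacher norm of the input, produces $\ss - \tt \geq \ctwo$.

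To conclude $\ss - \tt \geq \cone + \ctwo$ I would combine these two bounds with the assumed embedding condition $\ss - \tt \geq d(1/\pone - 1/\ptwo)$ via a short case analysis: when $\pone \leq 2 \leq \ptwo$, the identity $\cone + \ctwo = d(1/\pone - 1/\ptwo)$ reduces the claim to the embedding condition itself, and in every other case either $\cone = 0$ or $\ctwo = 0$, so that $\cone + \ctwo = \max(\cone,\ctwo)$ and the single relevant type- or cotype-2 bound above suffices. The \textbf{main obstacle} is the matching upper bound on the Rademacher norm in the Slobodeckij norm when $\ss$ or $\tt$ is non-integer: the seminorm $\int\!\int |g(x)-g(y)|^\pp / |x-y|^{d+\ss\pp} \, dx \, dy$ is non-local, so summing disjoint bumps produces cross contributions from $x \in \supp \varphi_i$, $y \in \supp \varphi_j$ with $i \neq j$. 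Controlling them reduces to the lattice estimate $\sum_{k\in \Z^d\setminus\{0\}} |k|^{-(d+\ss\pp)} < \infty$, which holds since $\ss > 0$, so the cross contributions stay of the same order as the diagonal ones; the auxiliary conditions $\pone > 1$ for integer $\ss$ and $\ptwo > 1$ for integer $\tt$ ensure that the identification of $\slospace{\mycdot}{\mycdot}{\Om}$ with a Triebel-Lizorkin space remains available in those boundary cases.
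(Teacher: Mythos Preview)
Your proposal is correct and follows essentially the same route as the paper: for \emph{(i)} you identify the Slobodeckij spaces with Triebel--Lizorkin spaces via \eqref{eq:triebelid-slobo} and use the embedding theorems in \cref{theorem:triebels_embeddings} to pass through $\sobspace{\uu}{2}{\Om}=\triebelspace{\uu}{2}{2}{\Om}$; for \emph{(ii)} you test the type~2 and cotype~2 norms of the embedding on a family of $n\asymp\delta^{-d}$ disjointly supported rescaled bumps, obtain $\ss-\tt\ge\cone$ and $\ss-\tt\ge\ctwo$ separately, and then combine via the same case split the paper uses.

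The only substantive technical difference is how you dispatch the non-local cross terms in the Slobodeckij seminorm for the \emph{upper} Rademacher bound. The paper (in \cref{lemma:asymptotics_in_wsp_of_rescaled_bumps}) works with a general $3\delta$-packing rather than a grid and decomposes the double integral into near/far regions around each support ball, bounding each piece by a radial integral as in \eqref{eq:int-of-radial-apl}; you instead exploit the regular grid geometry to reduce the total cross contribution to a convergent lattice sum. Both arguments are standard and give the same order $n^{1/\pp}\delta^{d/\pp-\ss}$. One small imprecision: the exponent in your lattice sum should be $d+\theta\pp$ with $\theta=\ss-\floor{\ss}$ rather than $d+\ss\pp$, since the seminorm is applied to $\partial_\alpha\varphi_i$ with $\munorm{\alpha}=\floor{\ss}$; this does not affect the convergence or the conclusion.
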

	
		\Cref{lemma:epsapprox}, a result showing that Slobodeckij spaces are ``dense'' in the Besov-Triebel-Lizorkin spaces in a suitable way, allows analyzing 2-factorability of embeddings of Besov-Triebel-Lizorkin spaces based on \cref{lemma:slobotwofac}, yielding a central result which is our strongest answer to \eqref{eq:central_question}.

\begin{restatable}{theorem}{triebelbesovtwofac}\label{thm:two_factorisability_of_general_triebel_besov_embedds}
	Let $\Om\subset \Rd$ be a smooth bounded domain and 
	let $0\le \tt<\ss$.
	Let $\spaceone \ss\pone\qone\Om$ and $\spacetwo \tt\ptwo\qtwo \Om $ be
	spaces\footnote{Mixed cases, such as for example $\spaceone \ss\pone\qone\Om=\besovspace \ss\pone\qone\Om$ and $\spacetwo \tt\ptwo\qtwo \Om=\triebelspace \tt\ptwo\qtwo \Om$, are possible.}
	 of Triebel-Lizorkin type $\triebelspace{\ss}{p}{q}{\Om}$
	 or of Besov type $\besovspace \ss p q \Om$.
	If both spaces are of Triebel-Lizorkin type, assume $\ss-\tt\ge d/\pone-d/\ptwo$, and otherwise let $\ss-\tt> d/\pone-d/\ptwo$. 
	Then, the embedding
	\begin{align*}
		\spaceone \ss\pone\qone\Om \hookrightarrow \spacetwo \tt\ptwo\qtwo \Om 
	\end{align*}
	exists and the following statements hold true:
	\begin{enumerate}
	\item
	\textbf{Sufficient requirement.}
	If $\ss-\tt>\cone+\ctwo$, then the embedding 2-factorizes as 
	\begin{align}
		\label{eq:twofactorizinggeneralspaces}
		\spaceone \ss\pone\qone\Om\hookrightarrow \sobspace{\uu}{2}{\Om} \hookrightarrow \spacetwo \tt\ptwo\qtwo \Om
	\end{align}
	for any $\uu\in (\tt+\ctwo,\ss-\cone)$.
	If both spaces are of Triebel Lizorkin type, then the 2-factorization in \eqref{eq:twofactorizinggeneralspaces} even exists for all $\uu\in [\tt+\ctwo,\ss-\cone]\setminus\{\ss,\tt\}$. 
	
	\item
	\textbf{Necessary requirement.}
	If  $\tt>0$ and the embedding $\embe{\spaceone \ss\pone\qone\Om\ }{\spacetwo \tt\ptwo\qtwo \Om}$ is 2-factorable, then we have
	\begin{align*}
		\ss-\tt\ge\cone+\ctwo.
	\end{align*}
	\end{enumerate}
\end{restatable}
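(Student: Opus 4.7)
The proof naturally splits along the two implications. For Part (i), the plan is to pick any $\uu$ in the open interval $(\tt+\ctwo,\,\ss-\cone)$ and observe that $\sobspace{\uu}{2}{\Om}=\triebelspace{\uu}{2}{2}{\Om}$ is a Hilbert space, hence automatically provides a 2-factorization vertex in the sense of \cref{definition:two-fac}. The two required embeddings
\begin{align*}
\spaceone{\ss}{\pone}{\qone}{\Om} \hookrightarrow \sobspace{\uu}{2}{\Om} \hookrightarrow \spacetwo{\tt}{\ptwo}{\qtwo}{\Om}
\end{align*}
follow from the Besov-Triebel-Lizorkin embedding theorems collected in \cref{theorem:triebels_embeddings}: the left embedding holds because the smoothness gap $\ss-\uu$ strictly exceeds $\cone=\pospart{d/\pone-d/2}$, which via the standard Sobolev-type embeddings ensures the necessary integration-index shift to $2$ is admissible regardless of the fein indices involved; the right embedding follows by the symmetric argument with $\uu-\tt>\ctwo=\pospart{d/2-d/\ptwo}$. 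For the strengthened claim in the pure Triebel-Lizorkin case, one invokes the sharp (Jawerth/Franke type) embeddings, which permit the endpoints $\uu=\ss-\cone$ and $\uu=\tt+\ctwo$; the exclusion of $\{\ss,\tt\}$ handles the degenerate situations where either $\cone=0$ or $\ctwo=0$.

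For Part (ii), the plan is to reduce to the necessary half of \cref{lemma:slobotwofac} by sandwiching $\spaceone{\ss}{\pone}{\qone}{\Om}$ and $\spacetwo{\tt}{\ptwo}{\qtwo}{\Om}$ between Slobodeckij spaces with arbitrarily small parameter perturbations. Concretely, assuming that the given embedding is 2-factorable and that $\tt>0$, \cref{lemma:epsapprox} supplies, for every sufficiently small $\ep>0$, continuous embeddings
\begin{align*}
\slospace{\ss+\ep}{\pone}{\Om}\hookrightarrow \spaceone{\ss}{\pone}{\qone}{\Om} \hookrightarrow \spacetwo{\tt}{\ptwo}{\qtwo}{\Om} \hookrightarrow \slospace{\tt-\ep}{\ptwo}{\Om}.
\end{align*}
By the ideal property of 2-factorable operators recalled after \cref{definition:two-fac}, the composed outer embedding is itself 2-factorable, so the necessary direction of \cref{lemma:slobotwofac} forces
\begin{align*}
(\ss+\ep)-(\tt-\ep) \ge \cone+\ctwo.
\end{align*}
Letting $\ep\to 0$ finishes the proof. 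The hypothesis $\tt>0$ is precisely what guarantees that $\tt-\ep$ stays positive for small enough $\ep$, so the lower Slobodeckij space lies within the parameter range covered by \cref{lemma:slobotwofac}.

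The main obstacles I anticipate are (a) the bookkeeping of which sharp embedding is required in which parameter regime, especially at the endpoint case of Part (i) where one can no longer afford any slack in smoothness and must carefully invoke the Jawerth/Franke-type embeddings rather than their lossy variants, and (b) in Part (ii) arranging the $\ep$-perturbation so that it simultaneously respects the integer-smoothness restrictions of \cref{lemma:slobotwofac}, namely $\pone>1$ when $\ss+\ep\in\N$ and $\ptwo>1$ when $\tt-\ep\in\N_0$. These edge cases can be handled by slightly perturbing either the smoothness or the integration index; once they are managed, the reduction to the already-established Slobodeckij result is routine.
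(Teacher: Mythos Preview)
Your plan is essentially the paper's proof: Part (i) via \cref{theorem:triebels_embeddings} and the identification $\sobspace{\uu}{2}{\Om}=\triebelspace{\uu}{2}{2}{\Om}$, and Part (ii) by sandwiching with Slobodeckij spaces from \cref{lemma:epsapprox}, invoking the ideal property, applying the necessary half of \cref{lemma:slobotwofac}, and letting the perturbation tend to zero.

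One point you gloss over: \cref{lemma:epsapprox} does not in general return the spaces $\slospace{\ss+\ep}{\pone}{\Om}$ and $\slospace{\tt-\ep}{\ptwo}{\Om}$ with the \emph{original} integration indices. It produces non-integer smoothness parameters $\ha\ss,\ch\tt$ and possibly perturbed indices $\ha\pone,\ch\ptwo$ (the latter unavoidable when $\pone=\infty$ or $\ptwo=\infty$ in the Besov case). Consequently, \cref{lemma:slobotwofac} yields
\[
\ha\ss-\ch\tt \ge \pospart{d/\ha\pone-d/2}+\pospart{d/2-d/\ch\ptwo},
\]
with the \emph{perturbed} indices on the right, not $\cone+\ctwo$ directly. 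The paper bridges this via the elementary estimate $(a+b)_+\le a_+ + |b|$ together with the control $|d/\pone-d/\ha\pone|,|d/\ptwo-d/\ch\ptwo|\le\ep$ from \cref{lemma:epsapprox}, and also first checks that the hypothesis $\ha\ss-\ch\tt\ge d(1/\ha\pone-1/\ch\ptwo)$ of \cref{lemma:slobotwofac} is met. You anticipate such edge cases in your obstacles paragraph, so this is bookkeeping rather than a missing idea, but be aware that the clean inequality $(\ss+\ep)-(\tt-\ep)\ge\cone+\ctwo$ is not what falls out immediately.
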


Finally, we answer Question \eqref{eq:central_question} for $F=\ell_\infty(\Om)$, characterizing of the existence of an encompassing RKHS $H$ consisting of bounded functions for  Besov-Triebel-Lizorkin spaces modulo border cases. 

\begin{theorem}\label{lemma:besov_c0_twofac}
	Let $\Om\subset \Rd$ be a smooth bounded domain and 
	let $\ss>0$. Let $\spaceone \ss\pp\qq\Om$ be a
	space 
	of Triebel-Lizorkin type $\triebelspace{\ss}{p}{q}{\Om}$
	or of Besov type $\besovspace \ss p q \Om$.
	Let $\ss> d/\pp$.
	Then, the embedding
	\begin{align*}
		\spaceone \ss\pp\qq\Om \hookrightarrow C^0(\Om)
	\end{align*}
	exists and the following statements hold true:
	\begin{enumerate}
	\item
	\textbf{Sufficient requirement.}
	If $\ss>\czero+d/2$, then the embedding 2-factorizes as 
	\begin{align*}
		\spaceone \ss\pp\qq\Om\hookrightarrow \sobspace{\uu}{2}{\Om} \hookrightarrow C^0(\Om)
	\end{align*}
	for any   $\uu\in (d/2,\ss-\czero)$.

	\item
	\textbf{Necessary requirement.}
	If the embedding $\spaceone \ss\pp\qq\Om \hookrightarrow C^0(\Om)$ is 2-factorable, then we have
	\begin{align*}
		\ss\ge\czero+d/2.
	\end{align*}
	\end{enumerate}
\end{theorem}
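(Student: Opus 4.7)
The plan is to split the proof into the two directions, relying on classical embedding theorems for the sufficient part and on the cotype 2 machinery from \cref{lem:2fact-implies-twotype} together with a bump-function construction for the necessary part.

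For the sufficient direction, assume $\ss>\czero+d/2$ and pick $\uu\in(d/2,\ss-\czero)$. Via the identification $\sobspace{\uu}{2}{\Om}=\triebelspace{\uu}{2}{2}{\Om}$ from \eqref{eq:triebelid-sob}, I interpret $\sobspace{\uu}{2}{\Om}$ as a member of the Triebel-Lizorkin scale. Since $\uu>d/2$ and $\Om$ is smooth and bounded, the classical Sobolev embedding gives $\sobspace{\uu}{2}{\Om}\hookrightarrow C^0(\Om)$, so $\sobspace{\uu}{2}{\Om}$ is an RKHS of continuous functions. The remaining embedding $\spaceone{\ss}{\pp}{\qq}{\Om}\hookrightarrow\sobspace{\uu}{2}{\Om}$ is furnished by the Besov/Triebel-Lizorkin embedding results recalled in \cref{theorem:triebels_embeddings}: the parameter inequality $\ss-\uu>\czero\ge d/\pp-d/2$ is exactly what is required, and the strict inequalities in the range of $\uu$ accommodate the fein-index conditions needed when the source is of Besov type.

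For the necessary direction, suppose the embedding $\spaceone{\ss}{\pp}{\qq}{\Om}\hookrightarrow C^0(\Om)$ is 2-factorable, so by \cref{lem:2fact-implies-twotype} it is of cotype 2. Fix a nonzero $\phi\in C_c^\infty(\Rd)$ with $\supp\phi\subset B(0,1/2)$. For every sufficiently large $n$, one can choose $m\asymp n^d$ centers $x_1,\dots,x_m\in\Om$ for which the rescaled bumps $\phi_i(x)\equalDef\phi(n(x-x_i))$ have pairwise disjoint supports contained in $\Om$, using that $\Om$ has nonempty interior. The standard dilation scaling in Besov and Triebel-Lizorkin spaces gives $\snorm{\phi_i}_{\spaceone{\ss}{\pp}{\qq}{\Om}}\asymp n^{\ss-d/\pp}$, while $\snorm{\phi_i}_{C^0(\Om)}=\snorm{\phi}_\infty$ is $n$-independent, and hence $\ellpnorm{(\phi_1,\dots,\phi_m)}{2}{m}{C^0(\Om)}\asymp n^{d/2}$.

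The decisive step is an upper bound $\radnorm{(\phi_1,\dots,\phi_m)}{m}{\spaceone{\ss}{\pp}{\qq}{\Om}}\lesssim n^{\ss}$ on the Rademacher norm in the source space. This follows from the atomic decomposition of Besov-Triebel-Lizorkin spaces: after a suitable renormalization, the $\phi_i$ are atoms at scale $n^{-1}$ with pairwise disjoint supports, so any linear combination $\sum\lambda_i\phi_i$ satisfies $\snorm{\sum_i\lambda_i\phi_i}_{\spaceone{\ss}{\pp}{\qq}{\Om}}\lesssim n^{\ss-d/\pp}\snorm{(\lambda_i)}_{\ell_\pp}$; taking $\lambda_i=\eps_i\in\{\pm 1\}$ produces the claimed $n^{\ss-d/\pp}\cdot n^{d/\pp}=n^{\ss}$. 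This localization estimate is the main technical obstacle, everything else being elementary. Feeding the three estimates into the cotype 2 inequality yields $n^{d/2}\le C\,n^{\ss}$ uniformly in $n$, forcing $\ss\ge d/2$; combining this with the standing hypothesis $\ss>d/\pp$ gives $\ss\ge\max(d/2,d/\pp)=\czero+d/2$, completing the proof.
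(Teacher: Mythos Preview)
Your argument is correct. Part \emph{i)} matches the paper's approach essentially verbatim: both factor through $\sobspace{\uu}{2}{\Om}=\triebelspace{\uu}{2}{2}{\Om}$ via the embedding theorems collected in \cref{theorem:triebels_embeddings}.

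For part \emph{ii)} you take a genuinely different route. The paper first applies an $\varepsilon$-approximation lemma (\cref{lemma:epsapprox}) to pass to a Slobodeckij space $\slospace{\ha\ss}{\ha\pp}{\Om}\hookrightarrow \spaceone{\ss}{\pp}{\qq}{\Om}$ with nearby parameters, so that the bump-function computations can be carried out elementarily via the explicit Slobodeckij integral (\cref{lemma:slobo_rad_and_elltwo}); it then runs \emph{both} a type 2 and a cotype 2 argument and closes by letting $\varepsilon\to 0$. You instead stay in the original Besov--Triebel--Lizorkin scale and invoke the synthesis direction of the atomic decomposition to obtain the Rademacher-norm bound $\radnorm{(\phi_i)}{m}{\spaceone{\ss}{\pp}{\qq}{\Om}}\lesssim n^{\ss}$ directly. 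This is legitimate: smooth, compactly supported bumps at a single scale are (up to normalization) atoms without moment conditions when $\ss>0$, and the sequence-space norm of the coefficient vector $(\eps_i)$ at scale $n^{-1}$ is $\asymp n^{\ss}$ in both the $b^{\ss}_{\pp,\qq}$ and $f^{\ss}_{\pp,\qq}$ cases. Your approach is more economical---only cotype 2 is used, and no limiting argument is needed---but it trades the paper's self-contained Slobodeckij calculations for a heavier external tool. It is also worth noting that the paper's type 2 step is in fact redundant: its own final case distinction (either $\czero=0$ and cotype 2 suffices, or $\czero>0$ and the standing hypothesis $\ss>d/\pp$ already gives the bound) is exactly your concluding observation $\ss\ge\max(d/2,d/\pp)=\czero+d/2$.
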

\cref{lemma:besov_c0_twofac} \emph{i)} shows that over a \emph{bounded} smooth domain $\Om\subseteq \Rd$, 
we find a space $E$ consisting of sufficiently smooth functions of \emph{finite smoothness} such that for $F=C^0(\Om)$ our Question \eqref{eq:central_question} allows a positive answer.
This is a strong contrast to an \emph{unbounded }domain $\Om\subseteq \Rd$, where \cref{lem:unbounded_domain} shows that \eqref{eq:central_question} is not even feasible for $E=C^\infty(\Om)\subset \ell_\infty(\Om)=F$, which stresses the necessity of the boundedness assumption.

\section{Proofs}\label{sec:proofs}

\subsection{Proofs for Section \ref{sec:general}}

For the following results recall that  
a BSF $E$ on a set $X$ is called proper if the point evaluation $\delta_x: E\to \R, f\mapsto f(x)$ is continuous for all $x\in X$, see \cref{def:bsf}.

\begin{lemma}\label{lem:inclusion-is-continuous}
 Let $F$ be a proper BSF on $X$ and $E$ be a BSF on $X$ with $E\subset F$. Then the following statements are equivalent:
 \begin{enumerate}
  \item  $E$ is proper.
  \item We have $E\hookrightarrow F$.
 \end{enumerate}
In this case we further have $\snorm{\d_x}_{E'} \leq \snorm{\id:E\to F}\cdot \snorm{\d_x}_{F'}$ for all $x\in X$.
\end{lemma}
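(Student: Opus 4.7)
The plan is to prove the two implications and the norm inequality separately, with the closed graph theorem being the main tool for the harder direction.

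For \emph{ii)} $\Rightarrow$ \emph{i)}, I would simply factor the evaluation functional as $\delta_x^E = \delta_x^F \circ \mathrm{id}$, where $\delta_x^F$ is continuous by properness of $F$ and $\mathrm{id}:E\to F$ is continuous by assumption. This composition is continuous and, taking operator norms, immediately yields the claimed bound $\snorm{\delta_x}_{E'} \leq \snorm{\mathrm{id}:E\to F}\cdot \snorm{\delta_x}_{F'}$, which covers the final assertion of the lemma.

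For \emph{i)} $\Rightarrow$ \emph{ii)}, I would invoke the closed graph theorem. Both $E$ and $F$ are Banach spaces, and $\mathrm{id}:E\to F$ is linear, so it suffices to show that its graph is closed. To this end, let $(f_n)\subset E$ with $f_n\to f$ in $E$ and $f_n\to g$ in $F$. Since $E$ is proper (by hypothesis), continuity of each point evaluation $\delta_x:E\to\R$ gives $f_n(x)\to f(x)$ for every $x\in X$. Similarly, since $F$ is proper, $f_n(x)\to g(x)$ for every $x\in X$. By uniqueness of limits in $\R$, we conclude $f(x)=g(x)$ for all $x\in X$, i.e.~$f=g$ as functions. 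This closes the graph and the closed graph theorem delivers continuity of $\mathrm{id}:E\to F$, which is exactly $E\hookrightarrow F$.

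The only non-routine step is the invocation of the closed graph theorem, but even that is clean here because both BSFs inject into the space of all real-valued functions on $X$ via the obvious identity map, and the pointwise-convergence consequence of properness forces the two ``limits'' to agree as functions. I do not expect any genuine obstacle; the main thing to double-check is that all three function spaces involved ($E$, $F$, and $\R^X$ as a common receptacle) carry compatible set-theoretic identifications so that the equation $f=g$ in the closed graph argument really is an equation in both $E$ and $F$, which follows directly from $E\subset F$ as subsets of $\R^X$.
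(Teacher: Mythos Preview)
Your proof is correct and essentially identical to the paper's: both use the closed graph theorem for \emph{i)} $\Rightarrow$ \emph{ii)} via the pointwise-limit argument, and both derive \emph{ii)} $\Rightarrow$ \emph{i)} together with the norm bound by composing $\delta_x^F$ with the inclusion (the paper phrases this via the unit ball inclusion $B_E\subset cB_F$, but that is the same computation).
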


\begin{proofof}{Lemma \ref{lem:inclusion-is-continuous}}
 \atob {ii} i We write $c\equalDef  \snorm{\id:E\to F}$ and fix an $x\in X$. Then we have $B_E \subset c B_F$, and hence we obtain
 \begin{align*}
  \snorm{\d_x}_{E'}
  = \sup_{f\in B_E} |\d_x f| \leq \sup_{f \in cB_F} |\d_x f| = c \sup_{f \in B_F} |\d_x f| = c \snorm{\d_x}_{F'}\, ,
 \end{align*}
 and in particular $\d_x:E\to \R$ is continuous.

\atob i {ii} We will apply the Closed Graph Theorem, see e.g.~\cite[Theorem 1.6.11]{Megginson98}.
To this end let us fix a sequence $(f_n)\subset E$ with
$f_n \to f$ in $E$ and $f_n\to g$ in $F$. This yields both $f_n\to f$ pointwise and $f_n\to g$ pointwise, and hence
we conclude $f=g$, that is $\id f = g$. The Closed Graph Theorem then gives the continuity of $\id:E\to F$.
\end{proofof}

\begin{proofof}{Proposition \ref{prop:closed-subspaces}}
 Let us write $\snorm f_E \equalDef \snorm f_F$ for all $f\in E$.
  Then $(E, \snorm \cdot_E)$ is complete since $E\subset F$ is closed, and
 consequently $E$ is a BSF on $X$. Moreover, we obviously have $E\hookrightarrow F$, and hence $E$ is proper by
 Lemma \ref{lem:inclusion-is-continuous}.

\atob i {ii} Since $E$, $H$, and $F$ are proper BSFs an
 application of  Lemma \ref{lem:inclusion-is-continuous}   yields $E\hookrightarrow H \hookrightarrow F$.
 Our next goal is to show that $E$ is a closed subspace of $H$. To this end,
 let us fix a sequence
$(f_n)\subset E$ and an $f\in H$ with $\snorm{f_n-f}_H\to 0$.
Since $H\hookrightarrow F$, we then have
$\snorm{f_n-f}_F\to 0$, and in particular, $(f_n)$ is a Cauchy sequence with respect to $\snorm \mycdot _F$.
However, we have $\snorm f_E = \snorm f_F$ for all $f\in E$, and therefore $(f_n)$  is also a Cauchy
sequence in $E$. Since  $(E, \snorm \cdot_E)$ is complete,
there thus exists a $g\in E$ with $\snorm{f_n-g}_E\to 0$.
Moreover, since both $H$ and $E$ are proper,
we additionally have $f_n(x)\to f(x)$ and $f_n(x)\to g(x)$ for all $x\in X$, and therefore $f=g$. This shows $f\in E$.

Finally, since $E$ is a closed subspace of $H$, we see that $(E, \snorm\cdot_H)$ is a Hilbert space, and in particular complete. In addition,
$E\hookrightarrow H$ shows that $\id: (E, \snorm\cdot_E) \to (E, \snorm\cdot_H)$ is continuous, and obviously, it is also bijective.
The Open Mapping Theorem then guarantees that the inverse is also continuous, see e.g.~\cite[Cor.~1.6.6]{Megginson98}.

\atob {ii} i If $E$ is isomorphic to a Hilbert space, we can define a Hilbert space norm on $E$ that is equivalent to $\snorm \cdot_E$.
Then $E$ equipped with this Hilbert space norm is an RKHS $H$ that satisfies $E=H$.
\end{proofof}

\begin{lemma}\label{lem:restricted-BFS}
  Let $E$ be a proper BSF on $X$ and $Y\subset X$ be non-empty. Then
  \begin{align*}
   E_{|Y} \equalDef \bigl\{ f_{|Y}: f\in E  \bigr\}
  \end{align*}
 equipped with the norm
 \begin{align*}
 \snorm g_{E_{|Y}}\equalDef \inf\bigl\{ \snorm f_E\, : \, f\in E \mbox{ and } f_{|Y} = g     \bigr\} \, , \qquad\qquad g\in E_{|Y}
 \end{align*}
 is a proper BSF on $Y$ with
 \begin{align}\label{lem:restricted-BFS-h1}
 \snorm{\d_y}_{(E_{|Y})'} \leq \snorm{\d_y}_{E'}\, , \qquad\qquad y\in Y.
 \end{align}
 Moreover,
  the restriction operator $\,\cdot\,_{|Y}: E\to E_{|Y}$ is linear and continuous with $\snorm{\,\cdot\,_{|Y}: E\to E_{|Y}}\leq 1$.
\end{lemma}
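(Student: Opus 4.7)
The natural approach is to realize $E_{|Y}$ as a quotient of $E$ by the subspace of functions vanishing on $Y$. I would define $N \equalDef \{f \in E : f_{|Y} = 0\}$ and first verify that $N$ is a closed linear subspace of $E$. Linearity is immediate, and closedness uses exactly the properness hypothesis: since each point evaluation $\delta_y$ with $y \in Y$ is continuous on $E$, the kernels $\ker \delta_y$ are closed, and therefore so is their intersection $N = \bigcap_{y \in Y} \ker \delta_y$. Consequently $E/N$ equipped with the quotient norm is a Banach space.

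Next I would identify the obvious linear map $\Phi : E/N \to E_{|Y}$, $[f] \mapsto f_{|Y}$, which is well-defined, injective (by definition of $N$), and surjective (by definition of $E_{|Y}$). A direct computation then shows that the norm on $E_{|Y}$ defined in the statement agrees with the quotient norm transported by $\Phi$:
\begin{align*}
\snorm{\Phi([f])}_{E_{|Y}} = \inf\bigl\{\snorm{g}_E : g \in E, \, g_{|Y} = f_{|Y} \bigr\} = \inf_{h \in N} \snorm{f + h}_E = \snorm{[f]}_{E/N}.
\end{align*}
Hence $(E_{|Y}, \snorm{\cdot}_{E_{|Y}})$ is a Banach space, and by construction its elements are functions $Y \to \R$, so $E_{|Y}$ is a BSF on $Y$.

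For the continuity assertions, observe that given any $f \in E$, the choice $g = f$ is admissible in the infimum defining $\snorm{f_{|Y}}_{E_{|Y}}$, which immediately gives $\snorm{f_{|Y}}_{E_{|Y}} \le \snorm{f}_E$; this yields both linearity and the norm bound $\snorm{\,\cdot\,_{|Y} : E \to E_{|Y}} \le 1$ for the restriction operator. For the point-evaluation bound \eqref{lem:restricted-BFS-h1}, I would fix $y \in Y$ and $g \in E_{|Y}$, and for every $f \in E$ with $f_{|Y} = g$ apply properness of $E$ to estimate $\abs{g(y)} = \abs{f(y)} \le \snorm{\delta_y}_{E'} \snorm{f}_E$. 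Taking the infimum over such $f$ gives $\abs{g(y)} \le \snorm{\delta_y}_{E'} \snorm{g}_{E_{|Y}}$, which simultaneously establishes that $\delta_y$ is continuous on $E_{|Y}$ (so $E_{|Y}$ is a proper BSF) and delivers the desired norm inequality. The only mildly subtle point I anticipate is checking that $\snorm{\cdot}_{E_{|Y}}$ is a genuine norm rather than a seminorm, but this follows automatically from the quotient viewpoint once $N$ is closed, or equivalently from the point-evaluation bound just derived.
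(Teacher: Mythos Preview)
Your proof is correct and takes a genuinely different, more conceptual route than the paper's. The paper verifies directly that $\snorm{\cdot}_{E_{|Y}}$ satisfies the norm axioms (positivity uses properness of $E$ in the same way you do), then establishes completeness by the absolutely-convergent-series criterion: given $\sum \snorm{g_n}_{E_{|Y}} < \infty$, one lifts each $g_n$ to $f^{(n)}\in E$ with $\snorm{f^{(n)}}_E \le \snorm{g_n}_{E_{|Y}} + n^{-2}$, sums in $E$, and pushes the sum back down via the (already verified) continuous restriction map. Your quotient argument replaces both of these steps at once: closedness of $N=\bigcap_{y\in Y}\ker\delta_y$ (which is exactly where properness enters) makes $E/N$ a Banach space, and your isometric identification $E/N \cong E_{|Y}$ transports the Banach structure for free. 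The remaining parts (restriction has norm $\le 1$, properness of $E_{|Y}$, and the bound on $\snorm{\delta_y}_{(E_{|Y})'}$) are handled identically in both proofs. Your approach is shorter and more structural; the paper's is more self-contained in that it does not invoke the quotient-space machinery.
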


\begin{proofof}{Lemma \ref{lem:restricted-BFS}}
 Obviously, $E_{|Y}$ is a vector space consisting of functions $Y\to \R$ and the restriction operator is linear.

 To verify that $\snorm \cdot_{E_{|Y}}$
 is a norm, we first note that $\snorm \cdot_{E_{|Y}}\geq 0$ and $\snorm 0_{E_{|Y}} = 0$ are obvious.
 Moreover, if $g\in E_{|Y}$ satisfies $\snorm g_{E_{|Y}} = 0$, then there exists a sequence $(f^{(n)})\subset E$ with
 $f^{(n)}_{|Y} = g$ and $\snorm {f^{(n)}} \to 0$. The latter gives $f^{(n)}(x) \to 0$ for all $x\in X$, and hence we find
 \begin{align*}
  g(y) = f^{(n)}_{|Y}(y) = f^{(n)}(y) \to 0 \, , \qquad \qquad y\in Y.
 \end{align*}
This shows $g= 0$. In addition, the homogeneity follows from
\begin{align*}
 \snorm {\a g}_{E_{|Y}}
= \inf\bigl\{ \snorm f_E\, : \, f\in E \mbox{ and } f_{|Y} = \a g     \bigr\}
= \inf\bigl\{ \snorm {\a f}_E\, : \, f\in E \mbox{ and } f_{|Y} =   g     \bigr\}
= \a\snorm { g}_{E_{|Y}}\, .
\end{align*}
To verify the triangle inequality, we fix some $g_1, g_2\in E_{|Y}$. Since for all $f^{(1)},f^{(2)}\in E$ with
$f^{(i)}_{|Y} = g_i$ we have $(f^{(1)} + f^{(2)})_{|Y} = g_1+g_2$ we then obtain
\begin{align*}
 \snorm {g_1+g_2}_{E_{|Y}}
 &\leq  \inf\bigl\{ \snorm {f^{(1)}+f^{(2)}}_E\, : \, f^{(i)}\in E \mbox{ and }f^{(i)}_{|Y} = g_i \bigr\} \\
 &\leq  \inf\bigl\{ \snorm {f^{(1)}}_E + \snorm {f^{(2)}}_E\, : \, f^{(i)}\in E \mbox{ and }f^{(i)}_{|Y} = g_i \bigr\} \\
 &= \snorm {g_1}_{E_{|Y}} +  \snorm {g_2}_{E_{|Y}} \, .
\end{align*}

 Let us now show that restriction operator  $\,\cdot\,_{|Y}: E\to E_{|Y}$ is continuous. To this end, we fix an $f\in E$. Then we easily find
\begin{align*}
 \snorm{f_{|Y}}_{E_{|Y}} = \inf\bigl\{ \snorm {\tilde f}_E\, : \, \tilde f\in E \mbox{ and } \tilde f_{|Y} = f_{|Y}     \bigr\}\, ,
 \leq \snorm f_E\, .
\end{align*}
and hence we have the desired continuity with  $\snorm{\,\cdot\,_{|Y}: E\to E_{|Y}}\leq 1$.

 Next, we show that the norm is complete. To this end, let $(g_n)\subset E_{|Y}$
be a sequence with
\begin{align*}
 \sum_{n=1}^\infty \snorm{g_n}_{E_{|Y}} < \infty.
\end{align*}
By a well-known characterization of \emph{complete} normed spaces, see e.g.~\cite[Thm.~1.3.9]{Megginson98},
it suffices to show that $\sum_{n=1}^\infty g_n$ converges in $E_{|Y}$.
To this end, we choose $f^{(n)}\in E$ with  $f^{(n)}_{|Y} = g_n$ and $\snorm{f^{(n)}}_E \leq \snorm{g_n}_{E_{|Y}} + n^{-2}$ for all $n\geq 1$.
This gives
\begin{align*}
 \sum_{n=1}^\infty \snorm{f^{(n)}_{|Y}}_E < \infty,
\end{align*}
and since $E$ is complete, we see with the help of \cite[Thm.~1.3.9]{Megginson98}
that $f \equalDef \sum_{n=1}^\infty f^{(n)}$ exists with convergence in $E$.
Using the already established continuity of the restriction operator  $\,\cdot\,_{|Y}: E\to E_{|Y}$
we conclude that
\begin{align*}
 \biggl( \sum_{n=1}^\infty f^{(n)} \biggr)_{|Y}
 =
  \biggl( \lim_{m\to \infty} \sum_{n=1}^m f^{(n)} \biggr)_{|Y}
  =
  \lim_{m\to \infty} \biggl( \sum_{n=1}^m f^{(n)} \biggr)_{|Y}
  = \lim_{m\to \infty}  \sum_{n=1}^m f^{(n)}_{|Y}
  =\sum_{n=1}^\infty g_n\, ,
\end{align*}
where the convergence in the last three expressions take place in $E_{|Y}$ by the continuity of the restriction operator.

Finally, to check that $E_{|Y}$ is proper, we fix an $y\in Y$.
For $g\in E_{|Y}$ and an $f\in E$ with $f_{|Y}= g$ we then obtain
\begin{align*}
 |\d_y g| = |g(y)| = |f_{|Y}(y)| = |\d_y f|   \leq \snorm{\d_y}_{E'} \cdot    \snorm f_E  \, ,
\end{align*}
and by taking the infimum over all $f\in E$ with $f_{|Y}= g$ we then see that $|\d_y g| \leq \snorm{\d_y}_{E'} \cdot  \snorm g_{E_{|Y}}$.
The latter also shows \eqref{lem:restricted-BFS-h1}.
\end{proofof}

\begin{proofof}{Proposition \ref{prop:restricted-inclusions}}
 We first show that $E_{|Y} \subset  F_{|Y}$.
 To this end, we fix some $g\in E_{|Y}$. Then there exists an $f\in E$ with $f_{|Y} = g$. We then know that $f\in F$, which in
 turn implies $g = f_{|Y} \in  F_{|Y}$.
 Moreover, we find
 \begin{align*}
  \snorm g_{F_{|Y}}
  =  \inf\bigl\{ \snorm {\tilde f}_F\, : \, \tilde f\in F \mbox{ and } \tilde f_{|Y} = g     \bigr\}
  \leq \snorm f_F
  \leq \snorm{\id:E\to F} \cdot \snorm f_E\, ,
 \end{align*}
and by taking the infimum over all $f\in E$ with $f_{|Y} = g$ we then find $\snorm g_{F_{|Y}}  \leq  \snorm{\id:E\to F} \cdot  \snorm g_{E_{|Y}}$.
\end{proofof}

\begin{proofof}{Lemma \ref{lem:2fact-implies-twotype}}
Let us fix a 2-factorization \eqref{eq:twofactorisation}. Then we can expand this to
		\begin{center}
			\begin{tikzcd}
				E \arrow[d,"U"] \arrow[r, "\id"]&   F \\
				H \arrow[r,"\id_H"] & H\arrow[u,"V"]
			\end{tikzcd}
		\end{center}
Since $H$ is of type 2, so is $\id_H$, and therefore \eqref{eq:type-2-ideal-norm} shows that $A$
is of type 2, too. Analogously, we find that $A$  is of cotype 2.
\end{proofof}

\begin{proofof}{\cref{theorem:2-fac_in_bfs}}
	If we have an RKHS $H$ with $E\subset H\subset F$, then Lemma \ref{lem:inclusion-is-continuous}
	shows $E\hookrightarrow H_0\hookrightarrow F$, and this in turn directly provides a 2-factorization.
	To show the converse, assume a 2-factorization
	\begin{center}
		\begin{tikzcd}
			E \arrow[rd,"U"] \arrow[hookrightarrow,rr, "\id"]&  & F \\
			& H_0 \arrow[ru,"V"] &
		\end{tikzcd}.
	\end{center}
	We define the feature map $\phi:X\to H_0$ by
	$$
	\phi(x)  \equalDef  V^* \d_{x,F}\,  ,
	$$
	where $\d_{x,F}\in F'$ denotes the evaluation functional at $x$ acting on $F$ and
	$\adjoint V:\dual F\to { H_0}$ is the adjoint operator
	of $V$ that is uniquely determined by
	\begin{align*}
	\scal  {V^*f'}{h}_{H_0}  = \scal {f'}{Vh}_{F',F}\, , \qquad \qquad h\in H_0, f'\in F'.
	\end{align*}
	The corresponding kernel $k:X\times X\to \R$ is
	\begin{align*}
		k(x,x') \equalDef \scal{ \phi(x)}{\phi(x')}_{{H_0}}
	\end{align*}
	and by \cite[Thm.~4.21]{StCh08} its RKHS is given by
	\begin{align}\label{eq:H-descript}
	H
	=\left\{ \scal{h}{\phi(\cdot)}_{{H_0}} \, \middle \vert\, {h}\in {H_0}\right\} \, .
	\end{align}
	Our next goal is to show  $H=\ran V$.
	To this end, we first observe that for $h\in H_0$ and $x\in X$ we have
	\begin{align*}
		\scal{{h}}{\phi(x)}_{H_0}
		&= \scal{{h}}{V^* \d_{x,F}}_{H_0}
		=\scal{V^* \d_{x,F}}{h}_{H_0}
		= \scal{\d_{x,F}}{V {h}}_{F',F}
		= (V{h})(x) \, .
	\end{align*}
	Now, if choose an $f\in H$, then by \eqref{eq:H-descript} there exists an $h\in H_0$
	with $f= \scal{{h}}{\phi(\cdot)}_{{H_0}}$,
	and hence our observation yields $f = Vh\in \ran V$.
	Conversely, if we fix an $f\in \ran V$, then there exists an
	$h\in H_0$
	with $Vh =f$, and our observation gives $f= \scal{{h}}{\phi(\cdot)}_{H_0}\in H$
	by \eqref{eq:H-descript}.
	We  conclude that $H=\ran V$, which in particular implies $H\subset F$.

	To establish $E\subset H$, we simply observe that the 2-factorization together with $H=\ran V$
	yields $E	= VUE\subset \ran V= H$.
\end{proofof}

\begin{proofof}{Theorem \ref{thm:suff-2-fact}}
\ada i By Pietsch's factorization theorem, see e.g.~\cite[Cor.~2.16]{DiJaTo95}, we know that $\id: E\to F$ is 2-factorable,
and hence the assertion follows by Theorem \ref{theorem:2-fac_in_bfs}.

\ada {ii} By Kwapien's theorem, see e.g.~\cite[Thm.~12.19]{DiJaTo95}, we know that $\id: E\to F$ is 2-factorable,
and therefore the assertion again follows by Theorem \ref{theorem:2-fac_in_bfs}.
\end{proofof}

\begin{proofof}{Corollary \ref{cor:nec-2-fact}}
By Theorem \ref{theorem:2-fac_in_bfs}  we know that $\id:E\to F$ is 2-factorable, and hence the assertion follows by Lemma \ref{lem:2fact-implies-twotype}.
\end{proofof}

\subsection{Proofs for \cref{sec:examples}}

\subsubsection{Proofs for \cref{subsection:abstact_holder}}

\donee
Before we present the proofs of \cref{subsection:abstact_holder}, we recall that given a metric space $(X,d)$
and some $\a\in (0,1]$, the map $d^\a:X\times X\to [0,\infty)$ is another metric on $X$, which generates the same topology as $d$ does.
Moreover, an $f:X\to \R$ is $\a$-H\"older continuous with respect to $d$, if and only if $f$ is 1-H\"older continuous
with respect to $d^\a$, and we have
\begin{align}
	\label{eq:holder_changing_metric}\mholnorm{f}{\al}{\met}=\mholnorm{f}{1}{\met^\al} \, .
\end{align}
Finally, an elementary calculation shows $\pac {\d} {\met^\al} X = \pac {\d^{1/\a}} {\met} X$ for all $\d>0$.

We construct suitable functions with disjoint support, for which
in this section a cotype 2 argument yields the results.
To this end, consider, for fixed center $t\in X$, radius $\d>0$, and $\al \in (0,1]$ the bump function $f_{t,\d,\a}:X\to\R$ defined by
	\begin{align}\label{eq:hoelder-bump-function}
		f_{t,\d,\a}(s)\equalDef
		\pospart{\d-\met^\al(t,s)}\, , \qquad \qquad s\in X.
	\end{align}
Now, the basic idea is to consider centers $t_1,\dots,t_n$ and a radius $\d$, for which the corresponding
functions $f_{t_i,\d,\a}$ have disjoint support, since in this case the Rademacher norms can be
suitably controlled as the following result shows. 

\begin{proposition}\label{prop:metric_bumps_properties}
	Let $(X,\met)$ be a metric space and  $0<\be\le\al\le 1$. Moreover, let $t_1,\dots,t_n\in X$ and $0<\d\le 1$. Then the following statements hold
	true for the bump functions $f_{t_i,\d,\a}$ defined in \eqref{eq:hoelder-bump-function}:
	\begin{enumerate}
	\item We have $\supp f_{t_i,\d,\a} \subset \ball{\d^{1/\a}}{t_i}$ and $\inorm{f_{t_i,\d,\a}} = \d$.
	\item If   $\met^\al(t_i,t_j)\ge 3\d$ holds for all $i\ne j$, then, for all $\varepsilon_1,\dots,\varepsilon_n \in \{-1,1\}$, we have 
	\begin{align*}
	\Lmholnorm{\sum_{i=1}^n \varepsilon_i \metbump {t_i} {\d}{\al}}{\al}{\met} \le  1 \, .
	\end{align*}
	 \item If there exist  $s_1,\dots,s_n\in X$ with $\met(s_i,t_i)=\d^{1/\al}$, then
		$\mholnorm{ \metbump {t_i} {\d}{\met^\al}}{\be}{\met} \ge \d^{(\al-\be)/\al}$ for all $i=1,\dots,n$.
	\end{enumerate}
\end{proposition}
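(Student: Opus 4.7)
I begin with \emph{(i)}, which amounts to unpacking the definition: $\metbump{t_i}{\d}{\al}(s) > 0$ iff $\met^\al(t_i,s) < \d$, i.e., $\met(t_i,s) < \d^{1/\al}$, so $\supp \metbump{t_i}{\d}{\al} \subset \ball{\d^{1/\al}}{t_i}$; the maximum value $\d$ is attained at $s = t_i$, and the function is clearly bounded above by $\d$.

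For \emph{(ii)}, the first step will be to observe that the separation hypothesis $\met^\al(t_i,t_j) \ge 3\d$, combined with \emph{(i)} and the triangle inequality for the metric $\met^\al$, forces the supports $\supp \metbump{t_i}{\d}{\al}$ to be pairwise disjoint (a common point would give $\met^\al(t_i,t_j) < 2\d$). Consequently, at each $x \in X$ at most one summand of $g \equalDef \sum_i \varepsilon_i \metbump{t_i}{\d}{\al}$ is nonzero, and $\inorm{g} \le \d \le 1$ follows immediately. For the H\"older seminorm, I will case-split on the location of $x, y \in X$. When both points lie in the same support or when one lies outside every support, the bound $|g(x)-g(y)| \le \met^\al(x,y)$ follows from the fact that each $\metbump{t_i}{\d}{\al}$ is $1$-Lipschitz with respect to $\met^\al$; in the mixed case, $y \notin \supp \metbump{t_i}{\d}{\al}$ yields $\met^\al(t_i,y) \ge \d$, whence $\met^\al(x,y) \ge \d - \met^\al(t_i,x) = \metbump{t_i}{\d}{\al}(x)$.

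The genuinely delicate situation is when $x \in \supp \metbump{t_i}{\d}{\al}$ and $y \in \supp \metbump{t_j}{\d}{\al}$ with $i \ne j$. Setting $u \equalDef \met^\al(t_i,x) + \met^\al(t_j,y) \in [0, 2\d)$, the triangle inequality together with the separation yields $\met^\al(x,y) \ge 3\d - u$, while $|g(x) - g(y)| \le (\d - \met^\al(t_i,x)) + (\d - \met^\al(t_j,y)) = 2\d - u$. Since $u \mapsto (2\d - u)/(3\d - u)$ is decreasing on $[0, 2\d)$, its supremum is $2/3 < 1$. Combining all cases then yields $\Lmholnorm{g}{\al}{\met} \le 1$.

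Finally, for \emph{(iii)} I would simply evaluate at the pair $(t_i, s_i)$: since $\metbump{t_i}{\d}{\al}(t_i) = \d$ and $\metbump{t_i}{\d}{\al}(s_i) = \pospart{\d - \d} = 0$, while $\met^\be(t_i,s_i) = \d^{\be/\al}$, the defining ratio for the H\"older seminorm evaluates to $\d / \d^{\be/\al} = \d^{(\al-\be)/\al}$, which is a lower bound on the full H\"older norm. The main obstacle I anticipate is the cross-support case in \emph{(ii)}: a naive application of $1$-Lipschitzness via $|g(x) - g(y)| \le \metbump{t_i}{\d}{\al}(x) + \metbump{t_j}{\d}{\al}(y) \le 2\met^\al(x,y)$ produces a spurious factor of $2$. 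Exploiting the full separation $3\d$ (rather than the minimal $2\d$ needed for disjointness alone) is precisely what brings the H\"older constant down to the sharp value $1$ claimed in the proposition.
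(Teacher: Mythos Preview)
Your proposal is correct and follows essentially the same approach as the paper: a case analysis on the location of $x,y$ relative to the disjoint supports, using $1$-Lipschitzness of each bump with respect to $\met^\al$ together with the $3\d$-separation for the cross-support case. Your treatment of the cross-support case is in fact slightly slicker than the paper's---you bound $|g(x)-g(y)|\le 2\d-u\le 3\d-u\le \met^\al(x,y)$ uniformly in the signs $\varepsilon_i,\varepsilon_j$, whereas the paper splits into $\varepsilon_{\iota(x)}=\varepsilon_{\iota(y)}$ and $\varepsilon_{\iota(x)}\ne\varepsilon_{\iota(y)}$---but this is a minor streamlining within the same argument.
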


\begin{proof}[Proof of \cref{prop:metric_bumps_properties}]
\ada i The construction of $f_{t_i,\d,\a}$ gives $\{  f_{t_i,\d,\a} \neq 0 \} \subset \ball{\d^{1/\a}}{t_i}$, and since the latter ball is closed, we obtain the first assertion. The second assertion is trivial.

\ada {ii} To emphasize the role of the metric in the definition of the bump function, we write $f_{t_i,\d,\a,\met} \equalDef f_{t_i,\d,\a}$ for a moment.
By switching to the metric $\met^\al$ we then have $f_{t_i,\d,\a,\met} = f_{t_i,\d,1,\met^\a}$.
Consequently, \cref{eq:holder_changing_metric}
shows
\begin{align*}
\Lmholnorm{\sum_{i=1}^n \varepsilon_i \metbump {t_i} {\d}{\al}}{\al}{\met}
= \Lmholnorm{\sum_{i=1}^n \varepsilon_i \metbump {t_i} {\d}{\a,\met}}{\al}{\met}
= \Lmholnorm{\sum_{i=1}^n \varepsilon_i \metbump {t_i} {\d}{1, \met^\al}}{1}{\met^\al} \, .
\end{align*}
Without loss of generality, it thus suffices to investigate the case $\a=1$ for
\begin{align*}
\fun \equalDef \sum_{i=1}^n \varepsilon_i \metbump {t_i} {\d}{1} \, .
\end {align*}
Here we first note that $\emph{i)}$ together with our assumption on $t_1,\dots,t_n$ implies
$\met(\supp f_{t_i,\d,1} ,\supp f_{t_j,\d,1} ) \geq \d$
for all $i\neq j$. This gives $\inorm f\leq \d \leq 1$ by $\emph{i)}$.
It remains to show
\begin{align}\label{eq:holder_goal}
	\abs{f(x)-f(y)} \leq \met(x,y) \, , \qquad\qquad x,y\in X \, .
\end{align}
To this end, we define the ``center identifying function''   $\iota:X\to \setrange 0 n$
by
\begin{align*}
	\iota(x)=
	\begin{cases}
		i\, , &\text{ if there exists an }i\in \setrange 1n \text{ such that }d(x,t_i)<\d\\
		0\, ,  &\text{ if }d(x,t_i)\ge\d \text{ for all }i\in \setrange 1n.
	\end{cases}
\end{align*}
With the help of this notation we can express $f$ by
\begin{align*}
f(x) =
	\begin{cases}
		\e_{\iota(x)}    \bigl(\d -\met(t_{\iota(x)},x)\bigr)\, ,  &\text{ if }\iota(x) > 0,\\
		0\, , &\text{ if }\iota(x) = 0.
	\end{cases}
\end{align*}
In the following we thus investigate \eqref{eq:holder_goal} for the possible values of $\iota(x), \iota(y)$.

\textbf{Case $\iota(x)=\iota(y) =0$.} Trivial.

\textbf{Case $\iota(x)=\iota(y) >0$.}
Here we have  $\abs{f(x)-f(y)}= \abs{\met(t_{\iota(x)},x) -\met(t_{\iota(x)},y)}\le \met(x,y)$.

\textbf{Case $\iota(y)\neq \iota(x)$ and $\iota(x)=0$.} Here, $\iota(x)=0$ implies
$\d \leq \met (x,t_{\iota(y)}) \leq  \met(x,y)+ \met(y,t_{\iota(y)})$, and hence we find
$\abs{f(x)-f(y)}= \d - \met(y,t_{\iota(y)}) \leq \met(x,y)$ as desired.

\textbf{Case $\iota(x)\ne\iota(y)$ and $\iota(x),\iota(y)>  0$.} Here we first note that  $\met(x,y)\ge \d$.
If $\varepsilon_{\iota(x)} = \varepsilon_{\iota(y)}$, we thus find
\begin{align*}
\abs{f(x)-f(y)}
= \abs{ \met(y,t_{\iota(y)}) -\met(x,t_{\iota(x)}) 	}
\le \max\{\met(y,t_{\iota(y)}) , \met(x,t_{\iota(x)})  \}
\le \d
\le \met(x,y)\, .
\end{align*}
Moreover, if  $\varepsilon_{\iota(x)} \neq \varepsilon_{\iota(y)}$,  we obtain
\begin{align*}
\abs{f(x)-f(y)}
=
\d - \met(x,t_{\iota(x)}) + \d - \met(y,t_{\iota(y)})
\leq
3\d - \met(x,t_{\iota(x)}) - \met(y,t_{\iota(y)})
\leq \met(x,y)\, ,
\end{align*}
where in the last step we  used $3\d \leq \met(t_{\iota(x)},t_{\iota(y)}) \leq  \met(x,t_{\iota(x)}) + \met(x,y) +\met(y,t_{\iota(y)})$.

\ada {iii} For simplicity, we write $s\equalDef s_i$ and $t\equalDef t_i$. Then we have
		\begin{align*}
			&\mholnorm{\metbump{t}{\d}{\al}}{\be}{\met}
			=
			\mholnorm{\metbump{t}{\d}{\al}}{1}{\met^\be}
			\ge \frac{\abs{\metbump{t}{\d}{ \al}(t)-\metbump{t}{\d}{ \al}(s)}}{\met^\be(s,t)}=
			\frac{\abs{\d - (\d-\met^\al(s,t))}}{\met^\be(s,t)}=
			\frac{\d}{\d^{\be/\al}}
			= \d^{(\al-\be)/\al}\, ,
		\end{align*}
		where in the first step we used \eqref{eq:holder_changing_metric}.
\end{proof}

\donee

\begin{lemma}\label{lem:connect-lemma}
Let $(X,d)$ be a connected metric space, $\d_0>0$, and $t\in X$. If there
exists an $s_0\in X$ with $\met(s_0,t) >  \d_0$, then for all $\d\in (0,\d_0]$ there
there   exists an $s\in X$ with $\met(s,t) = \d$.
\end{lemma}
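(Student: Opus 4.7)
The plan is to apply the standard intermediate value argument in the metric setting. Define the continuous real-valued function $\p:X\to \R$ by $\p(s) \equalDef \met(s,t)$; continuity follows at once from the reverse triangle inequality $|\met(s,t)-\met(s',t)|\leq \met(s,s')$. Since $X$ is connected and $\p$ is continuous, the image $\p(X)\subset \R$ is a connected subset of $\R$, hence an interval.

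Next I would identify two points in this interval that bracket every $\d\in(0,\d_0]$. Clearly $0=\p(t)\in\p(X)$, and by hypothesis $\p(s_0)=\met(s_0,t)>\d_0$, so $\p(s_0)\in\p(X)$ as well. Because $\p(X)$ is an interval containing both $0$ and $\p(s_0)>\d_0$, it must contain the whole interval $[0,\p(s_0)]$, and in particular the interval $[0,\d_0]$. Thus for any $\d\in(0,\d_0]$ there is some $s\in X$ with $\p(s)=\met(s,t)=\d$, which is the desired conclusion.

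There is essentially no obstacle: the only facts used are the continuity of $s\mapsto\met(s,t)$ and the elementary topological fact that continuous images of connected spaces are connected, together with the characterization of connected subsets of $\R$ as intervals. No further machinery is required, and the proof is a few lines long.
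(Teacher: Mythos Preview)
Your proof is correct and is essentially the same connectedness argument as the paper's, just packaged via the intermediate value theorem rather than by direct contradiction: the paper assumes no $s$ with $\met(s,t)=\d$ exists and exhibits the disconnection $X=\{\met(\cdot,t)<\d\}\cup\{\met(\cdot,t)>\d\}$, which is exactly the contrapositive of your observation that $\p(X)$ is an interval.
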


\begin{proof}[Proof of \cref{lem:connect-lemma}]
Let $\d\in (0,\d_0)$ and assume that for all $s\in X$ we have $\met(s,t) \neq \d$. Then we obtain a partition
\begin{align*}
X=\{ s\in X\mid \met(s,t)<\d \} \cup  \{s\in X\mid \met(s,t) > \d \}
\end{align*}
of $X$ into two non-empty, open sets. This contradicts the connectedness assumption.
\end{proof}

\begin{proof}[Proof of \cref{thm:abstract_holder_twofac_thm}]
	If $X$ has only one element, the claim is trivial. Hence, we assume that $X$ has at least two elements, which in turn implies
	$\diam X >0$. We define $\d_0 \equalDef \min\{1, (\diam X)^{\al}  \} /3$.

	For $0<\d<\d_0$ there then exist $s,t\in X$ with $\met^\a(s,t)   > 3\d$, and hence we obtain
	$n\equalDef \pac {3\d} {\met^\al} X \geq 2$. Let us fix a $3\d$-packing $t_1,\dots,t_n\in X$.
	For a fixed $i\in \setrange 1n$ and all $j\in \setrange 1n$ with $j\neq i$ we then have
	$\met^\al(t_j,t_i)\geq 3\d > \d$, where we note that $n\geq 2$ ensures that there actually exists such a $j\neq i$.
	Consequently,
	 \cref{lem:connect-lemma} gives an $s_i\in X$ with $\met^\a(s_i,t_i) = \d$.
	For $1\le i\le n$, we write
	$f_{i}\equalDef \metbump{t_i}{\d}{\al}$, where $\metbump{t_i}{\d}{\al}$ is the bump function defined in \eqref{eq:hoelder-bump-function}.
	Then, by \cref{prop:metric_bumps_properties} we have
	\begin{align*}
		\radnorm{(f_1,\dots,f_n)}{n}{\mholspace X {\al}\met }
		&= \E_{\radsequence{\varepsilon}}
			\norm[\Big]{	\sum_{i=1}^n \varepsilon_i f_i}_{\hoel_{\al,\met}}
		\le 1
		\intertext{and}
		\elltwonorm{(f_1,\dots,f_n)}{n}{\mholspace X {\be }\met   } &= \bra[\Big]{\sum_{i=1}^n \mholnorm{ f_i}\be \met ^2 }^{1/2} \ge n^{1/2}  \d^{(\al-\be)/\al}\, .
	\end{align*}
	Combining both estimates yields
	\begin{align*}
		C\equalDef \precotypenorm{\embe{\mholspace X \al \met}{\mholspace X \be \met}}{2}{n}
		\ge \frac{\elltwonorm{(f_1,\dots,f_n)}{n}{\mholspace X {\be }\met   }  }{\radnorm{(f_1,\dots,f_n)}{n}{\mholspace X {\al}\met } } \ge n^{1/2} \d^{(\al-\be)/\al}.
	\end{align*}
	Now \cref{cor:nec-2-fact} shows $C<\infty$, and using the definition of $n$ we get
	\begin{align*}
	\pac {(3\d)^{1/\a}} {\met} X = \pac {3\d} {\met^\al} X = n \leq C^2 \d^{2(\al-\be)/\al}\, ,
	\end{align*}
 where the first identity was already mentioned around \eqref{eq:holder_changing_metric}. A simple variable transformation then yields
 the assertion for all $\d < \min\{1, \diam X\}$.

To establish the second assertion, we note that even without the connectivity assumption
\begin{align*}
	\elltwonorm{(f_1,\dots,f_n)}{n}{\ell_\infty(X)}
	&= \bra[\Big]{  \sum_{i=1}^n \norm{ f_i}_{\ell_\infty(X)}^2 }^{1/2} = n^{1/2} \d
\end{align*}
holds.
Repeating the cotype 2 argument used above with $\b=0$ then yields the second assertion.
\end{proof}

\donee

\begin{proof}[Proof of \cref{lem:unbounded_domain}]
	Since $\Om$ is unbounded  we have $\paceuclid{\d}{\Om}=\infty$ for all $\d>0$:
	Indeed, if we had $\paceuclid{\d}{\Om}<\infty$, then there would be a maximal, finite $\d$-packing $x_1,\dots,x_n\in \Om$.
	For every
	$x\in \Om$ there would thus exist an $x_i$ with $\met(x,x_i)< \d$
	and $\Om\subset \ball\d{x_1}\cup\dots\cup \ball\d{x_n}$ would be bounded, which contradicts our assumption.
	We now set $\d\equalDef 3$  and choose  an infinite $\d$-packing $x_1,\dots\in \Om$.
	Moreover, we consider the bump function $\fun\in C^\infty(\Rd)$ from \cref{eq:bump_function} and define $\fun_i:\Om\to\R$ by $\fun_i(x)\equalDef \fun(x-x_i)$. For every $n\in\N$ we observe
	\begin{align*}
	\radnorm{(\fun_1,\dots,\fun_n)}{n}{C^\infty(\Om)}\le  \norm {\fun }_{C^\infty(\Rd)} < \infty
	\end{align*}
	as the supports of $\fun_1,\dots,\fun_n$ are disjoint by construction and $\fun \in \calS(\Rd)$.
	Furthermore, we have $\inorm{f_i} = f(0) = 1$ for all $i=1,\dots,n$, and hence we obtain
	$\elltwonorm{(\fun_1,\dots,\fun_n)}{n}{\ell_{\infty}({\Om})}=n^{1/2}$.
	We observe
	\begin{align*}
		\cotypenorm{\embe {C^\infty(\Om)}{\ell_\infty( \Om)}} \ge \frac{ \elltwonorm{(\fun_1,\dots,\fun_n)}{n}{\ell_{\infty}({\Om})}}{\radnorm{(\fun_1,\dots,\fun_n)}{n}{C^\infty(\Om)}} \ge \frac{n^{1/2}}{\norm{\fun}_{C^{\infty}(\Rd)}}
	\end{align*}
	for all $n\geq 1$. This implies $\cotypenorm{\embe {C^\infty(\Om)}{\ell_\infty( \Om)}} = \infty $ and by \cref{cor:nec-2-fact},
	there cannot exists an RKHS $H$ with $C^\infty(\Om)\subset H\subset \ell_{\infty}({\Om})$, that is, there is no
	RKHS $H$ with bounded kernel and $C^\infty(\Om)\subset H$.
\end{proof}

\subsubsection{Proofs for \cref{subsubsection:sobolev_spaces_of_mixed_smoothness}}

\donee

The proofs of \cref{subsubsection:sobolev_spaces_of_mixed_smoothness,subsubsection:isotropic_spaces}are based on the same bump function $\fun:\Rd\to\R$ given by
\begin{align}
	\label{eq:bump_function}
	\fun(x)\equalDef 
	\begin{cases}
		\frac{1}{e} \exp\bra{-\frac{1}{1-\norm x^2}}, & \mbox{ if } x\in  \ball {1}{0},\\
		0,& \text{ otherwise.}
	\end{cases}
\end{align}
Obviously, we have $\supp(\fun)\subset \ball 1 0$. Moreover, we have
$\fun\in \calS(\Rd)$, that is, $\fun$ is a Schwartz function \cite{AdFo03}.
Finally, for all multi-indices $\al\in\N_0^d$ it holds  $\partial_\al \fun\ne 0$.

\donee

\begin{lemma}\label{lemma:asymptotics_in_wsp_of_rescaled_bumps_simple_argument}
	Let $\Omega\subset \R^d$ be open such that $\ball 10\subset \Om$. Let $\al\in\N_0^d$, and $\pp\in [1,\infty)$.
	Moreover, for $\d \in (0,1/2]$ and suitable  $n\ge 1$   let $x_1,\dots,x_n\in \ball {1/2}0$ be a $3\d$-packing.
	We define $\fun_i:\Om\to\R$ by $ \fun_i(x)\equalDef \fun(\d^{-1}x -\d^{-1}x_i)$,
	where $\fun:\R^d\to\R $ is the bump function from \cref{eq:bump_function}.
	Moreover, 
	for  $\varepsilon_1,\dots,\varepsilon_n\in\set{-1,1}$ we define $\hfun:\Om\to\R$ by
	\begin{align*}
		\hfun(x)\equalDef \sum_{i=1}^n \varepsilon_i\fun_i(x)\, .
	\end{align*}
	Then the functions $\fun_1,\dots,\fun_n$ have disjoint support and the function $h$ fulfills
	\begin{align*}
		\lpnorm {\partial_\al \hfun}{\pp}\Om  =n^{1/p}\d^{d/\pp-\munorm\al} \lpnorm {\partial_\al \fun}{\pp}\Om  \, .
	\end{align*}
\end{lemma}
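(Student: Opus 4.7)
The plan is a straightforward substitution argument, split into: (a) checking the disjoint-support claim; (b) computing each individual $\Lp$-norm via a change of variables; (c) summing. I would carry out these steps in order.

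For (a), observe that $\fun_i(x)\neq 0$ forces $\d^{-1}x-\d^{-1}x_i\in \supp\fun\subset \ball 1 0$, so $\supp\fun_i \subset \ball \d {x_i}$. Since $x_1,\dots,x_n$ form a $3\d$-packing, any two of these balls are separated, and in particular disjoint. This immediately gives the first claim. Moreover, since $x_i\in \ball{1/2}{0}$ and $\d\le 1/2$, each $\supp \fun_i$ sits inside $\ball 1 0 \subset \Om$.

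For (b), the chain rule applied to the rescaling $\fun_i(x)=\fun(\d^{-1}x-\d^{-1}x_i)$ yields
\begin{align*}
\partial_\al \fun_i(x) = \d^{-\munorm\al}\,(\partial_\al \fun)(\d^{-1}x-\d^{-1}x_i).
\end{align*}
The substitution $y=\d^{-1}x-\d^{-1}x_i$, $\dx x = \d^d \dx y$, then gives
\begin{align*}
\lpnorm{\partial_\al \fun_i}{\pp}{\Om}^\pp
= \int_\Om \d^{-\pp\munorm\al}\,\abs{(\partial_\al \fun)(\d^{-1}x-\d^{-1}x_i)}^\pp \dx x
= \d^{\,d-\pp\munorm\al}\int_{\Rd}\abs{\partial_\al \fun(y)}^\pp \dx y,
\end{align*}
where I used that $\supp \fun \subset \ball 1 0 \subset \Om$ to extend the integration to $\Rd$ (and, conversely, that $\supp \fun_i\subset\Om$ to restrict the original integral to $\Om$). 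The last integral equals $\lpnorm{\partial_\al \fun}{\pp}{\Om}^\pp$.

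For (c), the disjointness from (a) shows that at each $x\in \Om$ at most one of the summands $\varepsilon_i \partial_\al \fun_i(x)$ is nonzero, hence pointwise $\abs{\partial_\al \hfun(x)}^\pp = \sum_{i=1}^n \abs{\partial_\al \fun_i(x)}^\pp$. Integrating and substituting the computation from (b) gives
\begin{align*}
\lpnorm{\partial_\al \hfun}{\pp}{\Om}^\pp
= \sum_{i=1}^n \lpnorm{\partial_\al \fun_i}{\pp}{\Om}^\pp
= n\,\d^{\,d-\pp\munorm\al}\,\lpnorm{\partial_\al \fun}{\pp}{\Om}^\pp,
\end{align*}
and taking $\pp$-th roots yields the claim. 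There is no real obstacle here; the only thing to be careful about is keeping track of the two size constraints ($x_i\in\ball{1/2}{0}$ and $\d\le 1/2$) that ensure all supports sit inside $\ball 1 0 \subset \Om$, so that the change of variables is genuinely over $\Rd$ and the factor $\lpnorm{\partial_\al \fun}{\pp}{\Om}$ is the same for every $i$.
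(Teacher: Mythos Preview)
Your proof is correct and follows essentially the same approach as the paper: disjointness of the supports via the $3\d$-packing, the chain rule for $\partial_\al\fun_i$, a change of variables $y=\d^{-1}x-\d^{-1}x_i$, and the observation that all supports lie inside $\ball 1 0\subset\Om$. The only cosmetic difference is that the paper writes out $\partial_\al\hfun$ first and splits the integral, whereas you compute each $\lpnorm{\partial_\al\fun_i}\pp\Om$ separately and then sum; the content is identical.
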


\begin{proof}
	We define $z_i\equalDef \d^{-1}x_i$. 
	For $x\in \ball 10$ we then have
	$\snorm {x+z_i} \leq \snorm x + \snorm {z_i} \leq 1 + \d^{-1}/2 \leq \d^{-1}$, that is $\ball 10 \subset \ball{\d^{-1}}{-z_i}$.
	Moreover, we observe the identity
	\begin{align*}
		\bra{\partial_\al \hfun}(x)= \negpow\d{\munorm\al } \isum \varepsilon_i\partial_\al\fun\bra{\negpow \d 1x-z_i}\, , \qquad \qquad x\in \Om.
	\end{align*}
	We define $D_i\equalDef \supp\bra{x\mapsto \partial_\al \fun\bra{\negpow \d 1x-z_i}}$. By $\supp(\partial_\al \fun)\subset \supp(\fun)\subset \ball 10$ and  our construction 
	we have $D_i\subset \ball{\d}{x_i}$, and especially the sets $D_1,\dots,D_n$ are pairwise disjoint.
	This directly yields
	\begin{align*}
		\norm{\partial_\al \hfun}_{\lpspace \pp \Om}^\pp 
		&=\d^{-p \munorm\al}\int_\Om \abs[\Big]{\isum \varepsilon_i\partial_\al\fun\bra{\negpow \d 1x-z_i}}^p \dx x
		=\isum\d^{-p \munorm\al}\int_\Om 
		\abs[\Big]{ \varepsilon_i\partial_\al\fun\bra{\negpow \d 1x-z_i}}^p \dx x \, .
	\end{align*}
	Moreover, using the substitution $y\equalDef \d^{-1}x -z_i$, that is $x= \d(y+z_i) = \d y + x_i$,  we find
	\begin{align*}
	\int_\Om\abs[\Big]{ \varepsilon_i\partial_\al\fun\bra{\negpow \d 1x-z_i}}^p \dx x
	=
	\d^d\int_{\Rd} \eins_\Om(\d y + x_i) \abs {\partial_\al\fun\bra{y}}^p \dx y
	&=
	\d^d\int_{-z_i + \d^{-1} \Om }  \abs {\partial_\al\fun\bra{y}}^p \dx y \\
	&=
	\d^d\int_{\ball 10}  \abs {\partial_\al\fun\bra{y}}^p \dx y \\
	&=
	\d^d \int_{\Om}  \abs {\partial_\al\fun\bra{y}}^p \dx y \, ,
\end{align*}
where in the last two steps we used the inclusions $\supp(\partial_\al\fun) \subset \supp(\fun) \subset \ball 10$ and
$\ball 10 \subset \ball{\d^{-1}}{-z_i} \subset -z_i + \d^{-1} \Om$ as well as $\ball 10\subset \Om$.
Combining both calculations then yields the assertion.
\end{proof}

\donee

\begin{lemma}\label{lem:radnorm-mixed-sobol}
	Let $\Omega\subset \R^d$ be open such that $\ball 10\subset\Om$ and let $\pp\in [1,\infty)$.
	Moreover, for $\d \in (0,1/2]$ and suitable $n\ge 1$  let $x_1,\dots,x_n\in \ball {1/2}0$ be a $3\d$-packing, and
	let $\fun_i:\Om\to\R$ be defined  by
	\begin{align*}
		\fun_i(x)\equalDef \fun(\d^{-1}x-\d^{-1}x_i)\, ,
	\end{align*}
	where   $\fun$ is the bump function from \cref{eq:bump_function}. Then for all coherent $A\subset \N_0^d$ and $\a_0\in A$ with $s\equalDef|\a_0|_1 = |A|_1$
	we have
	\begin{align}
		\label{eq:mixed_sob_radnorm}
		\lpnorm{\partial_{\a_0} \fun}p\Om \cdot {n}^{1/p}\d^{d/p - s}
		&\leq
		\radnorm{(\fun_1,\dots,\fun_{n})}{{n}}{\slospace\A p\Om}
		\leq
		\sup_{\a\in A}  \lpnorm{\partial_\a \fun}p\Om    \cdot  {n}^{1/p}\d^{d/p - s}\, ,
		\intertext{and }
		\lpnorm{\partial_{\a_0} \fun}p\Om \cdot {n}^{1/2}\d^{d/p - s}
		&\leq
		\elltwonorm{(\fun_1,\dots,\fun_{n})}{{n}}{\slospace\A p\Om}
		\leq
		\sup_{\a\in A}  \lpnorm{\partial_\a \fun}p\Om    \cdot  {n}^{1/2}\d^{d/p - s}\, .
		\nonumber
		\end{align}
\end{lemma}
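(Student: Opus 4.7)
The plan is to reduce both estimates to direct applications of \cref{lemma:asymptotics_in_wsp_of_rescaled_bumps_simple_argument}, after which the lemma follows from the identification of $s = \munorm{A}$ as the order of $\a_0$ and the assumption $\d \leq 1$.

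First, fix signs $\varepsilon_1,\dots,\varepsilon_n \in \{-1,+1\}$ and write $\hfun_\varepsilon \equalDef \sum_{i=1}^n \varepsilon_i \fun_i$. By \cref{lemma:asymptotics_in_wsp_of_rescaled_bumps_simple_argument} applied with this choice of signs, for every $\a \in A$ we have $\lpnorm{\partial_\a \hfun_\varepsilon}{p}{\Om} = n^{1/p} \d^{d/p - \munorm{\a}} \lpnorm{\partial_\a \fun}{p}{\Om}$. Taking the supremum over $\a \in A$ and using the definition of the mixed Sobolev norm in \eqref{eq:mixed_sob_norm} gives
\begin{align*}
\slonorm{\hfun_\varepsilon}{A}{p}{\Om} = n^{1/p} \sup_{\a \in A} \d^{d/p - \munorm{\a}} \lpnorm{\partial_\a \fun}{p}{\Om}.
\end{align*}
Since $\munorm{\a} \leq s$ for every $\a \in A$ and $\d \leq 1$, the factor $\d^{d/p - \munorm{\a}}$ is maximized at $\a = \a_0$ and equal to $\d^{d/p - s}$ there; hence the supremum on the right is bounded above by $\d^{d/p - s} \sup_{\a \in A} \lpnorm{\partial_\a \fun}{p}{\Om}$ and below by $\d^{d/p - s} \lpnorm{\partial_{\a_0} \fun}{p}{\Om}$. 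Since neither of these bounds depends on the choice of $\varepsilon$, taking the expectation over the Rademacher sequence converts the pointwise estimate on $\slonorm{\hfun_\varepsilon}{A}{p}{\Om}$ into the claimed two-sided estimate on $\radnorm{(\fun_1,\dots,\fun_n)}{n}{\slospace{A}{p}{\Om}}$.

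Second, I would treat the sequence norm by the same argument applied to single bumps. Specializing \cref{lemma:asymptotics_in_wsp_of_rescaled_bumps_simple_argument} to $n = 1$, or reading it off directly from that proof (since the $\fun_i$ are disjointly-supported translates of a common rescaled bump), yields $\lpnorm{\partial_\a \fun_i}{p}{\Om} = \d^{d/p - \munorm{\a}} \lpnorm{\partial_\a \fun}{p}{\Om}$ independently of $i$. Exactly the same manipulation of the supremum over $A$ then bounds $\slonorm{\fun_i}{A}{p}{\Om}$ between $\d^{d/p - s} \lpnorm{\partial_{\a_0} \fun}{p}{\Om}$ and $\d^{d/p - s} \sup_{\a \in A} \lpnorm{\partial_\a \fun}{p}{\Om}$. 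Squaring these per-component bounds and summing over $i = 1, \dots, n$ produces the $n^{1/2}$ factor and delivers both sides of the second claim.

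The only nontrivial step is the handling of the supremum over $A$: because the mixed Sobolev norm replaces a sum over $\a$ by a supremum, one must verify that the dominant scaling $\d^{d/p - s}$ is attained precisely by the maximal-order derivative $\a = \a_0$, which is where the assumption $\d \leq 1$ and the choice of $\a_0$ enter. Everything else is a direct unwinding of the definitions of $\radnorm{\mycdot}{n}{\slospace{A}{p}{\Om}}$ and $\elltwonorm{\mycdot}{n}{\slospace{A}{p}{\Om}}$.
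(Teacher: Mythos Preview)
Your proposal is correct and follows essentially the same approach as the paper: both apply \cref{lemma:asymptotics_in_wsp_of_rescaled_bumps_simple_argument} to obtain the sign-independent identity $\slonorm{\hfun_\varepsilon}{A}{p}{\Om} = n^{1/p}\sup_{\a\in A}\d^{d/p-\munorm{\a}}\lpnorm{\partial_\a f}{p}{\Om}$, then use $\d\le 1$ (the paper writes $\d\le 1/2$) to see the supremum is dominated by the $\a_0$-term, and finally specialize to $n=1$ for the sequence-norm estimate.
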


\begin{proof}
From \cref{lemma:asymptotics_in_wsp_of_rescaled_bumps_simple_argument} we directly obtain
	\begin{align*}
		\radnorm{(\fun_1,\dots,\fun_{n})}{{n}}{\slospace\A p\Om}
		= \E_{\radsequence{\varepsilon}} \slonorm{\varepsilon_1 \fun_1+\dots+\varepsilon \fun_{n}}{\A}{p}{\Om}
		= \sup_{\a\in \A} {n}^{1/p}\d^{d/p - |\a|_1} \cdot \lpnorm{\partial_\a \fun}p\Om\, .
	\end{align*}
	Now the lower bound on $\radnorm{(\fun_1,\dots,\fun_{n})}{{n}}{\slospace\A p\Om}$ is obvious, and the upper bound quickly follows by
	remembering $\d\leq 1/2$.
	
Moreover, applying \cref{lemma:asymptotics_in_wsp_of_rescaled_bumps_simple_argument} in the case $n=1$ shows
$\lpnorm {\partial_\al \fun_i}{\pp}\Om  = \d^{d/\pp-\munorm\al} \lpnorm {\partial_\al \fun}{\pp}\Om$ for all $i=1,\dots,n$, which in turn implies
\begin{align*}
\snorm{f_i}_{\slospace\A p\Om} = \sup_{\a\in A} \d^{d/\pp-\munorm\al} \lpnorm {\partial_\al \fun}{\pp}\Om \, .
\end{align*}
Hence we obtain
\begin{align*}
\elltwonorm{(\fun_1,\dots,\fun_{n})}{{n}}{\slospace\A p\Om}
=
n^{1/2} \sup_{\a\in \A}  \d^{d/\pp-\munorm\al} \lpnorm {\partial_\al \fun}{\pp}\Om\, .
\end{align*}
The rest of the proof is analogous to the proof of \eqref{eq:mixed_sob_radnorm}.
\end{proof}

\donee

\begin{proof}[Proof of \cref{thm:mixed_twofac}]
	By a simple translation and scaling argument we may assume without loss of generality that $\ball 10\subset \Om$. 
	Let us fix a  $0<\d\le 1/2$. We write $n\equalDef \paceuclid{3\d}{\ball {1/2}0}$ and choose a $3\d$-packing $x_1,\dots,x_{n}\in \ball {1/2}0$. Moreover, we define
	$\fun_1,\dots,\fun_n$ as in \cref{lem:radnorm-mixed-sobol}.
	By \cref{lem:radnorm-mixed-sobol} there then exists a constant $C$ that is independent on $n$ and $\d$ such that
	\begin{align*}
		\typenorm{\embe{\slospace \A\pone\Om}{\slospace\B\ptwo\Om}}
		\ge
		\frac{
			\radnorm{(\fun_1,\dots,\fun_{n})}{{n}}{\slospace\B\ptwo\Om}}
		{\elltwonorm{(\fun_1,\dots,\fun_{n})}{{n}}{\slospace\A\pone\Om}}
		\ge
		 C {n}^{1/\ptwo-1/2} \d^{\ss-\tt + d(1/\ptwo-1/\pone)} \, .
	\end{align*}
	Using the packing number bound \eqref{eq:cov-num-Rd} we thus find some constant $\widehat C>0$ such that 
	\begin{align*}
		\typenorm{\embe{\slospace \A\pone\Om}{\slospace\B\ptwo\Om}}
		\ge \widehat C \d^{\ss-\tt +d(1/2-1/\pone)}\, .
	\end{align*}
	holds for all $\d\in (0,1/2]$.
	Now \cref{lem:2fact-implies-twotype} implies that 
	$\ss-\tt\ge \pospart{d/\pone -d/2}$ holds. 
	Using a cotype 2 argument, we analogously obtain the requirement $\ss-\tt\ge \pospart{d/2 -d/\ptwo}$.

	It remains to show that  $\ss-\tt\ge \cone+\ctwo$. However, if  $\cone=0$ or $\ctwo=0$, then the claim immediately follows from our previous
	considerations.
	Moreover, if both are positive, we have
	\begin{align*}
	\cone+\ctwo=d(1/\pone-1/\ptwo)\, ,
	\end{align*}
	 and the claim holds by assumption. 
\end{proof}

\donee
\begin{proof}[Proof of \cref{thm:mixed_twofac_C-0}]
	Recall that if there exists an   RKHS $H$ with 
	 ${\slospace\A\pp\Om} \subset H\subset {\ell_\infty(\Om)}$, then $\embe{\slospace\A\pp\Om}  {\ell_\infty(\Om)}$ is 2-factorable,
	 and therefore of type 2 and cotype 2.
	 
	By a simple translation and scaling argument we may assume without loss of generality that $\ball 10\subset \Om$. 
	Let us fix a  $0<\d\le 1/2$. We write $n\equalDef \paceuclid{3\d}{\ball {1/2}0}$ and choose a $3\d$-packing $x_1,\dots,x_{n}\in \ball {1/2}0$.
	For the functions $f_1,\dots,f_n$ considered in \cref{lemma:asymptotics_in_wsp_of_rescaled_bumps_simple_argument} we then have
	\begin{align*}
		\radnorm{(\fun_1,\dots,\fun_{n})}{{n}}{\ell_\infty(\Om)}=\E_{\radsequence{\varepsilon}} \norm{\varepsilon_1 \fun_1 +\dots +\varepsilon_n \fun_n}_{\ell_\infty(\Om)}= \supnorm{\fun_1}=1,
	\end{align*}
	since the functions $\fun_1,\dots,\fun_n$ are disjointedly supported translated copies of each other. \cref{lemma:asymptotics_in_wsp_of_rescaled_bumps_simple_argument,lem:radnorm-mixed-sobol} yield
	\begin{align*}
		\typenorm{ \embe{\slospace\A\pp\Om}  {\ell_\infty(\Om)}} 
		\ge 
		\frac
		{\radnorm{(\fun_1,\dots,\fun_{n})}{{n}}{\ell_\infty(\Om)} }
		{\elltwonorm{(\fun_1,\dots,\fun_{n})}{{n}}{\slospace\A\pp\Om}}
		\ge C {n}^{-1/2} \d^{\ss - d/\pp}
	\end{align*}
	for some suitable constant $C>0$, and using the packing number bound \eqref{eq:cov-num-Rd}  \cref{lem:2fact-implies-twotype}
	gives $\ss\ge \czero$.
	
	Analogously, a cotype 2 argument shows $\ss\ge d/2$. 
	We conclude the claim by considering the cases $\cone=0$ and $\cone =d (1/\pp-1/2)$.
\end{proof}

\subsubsection{Proofs for \texorpdfstring{\cref{subsubsection:RKBS}}{}}\label{subsubsection:RKBS_proofs}

\begin{proof}[Proof of \cref{prop:irkbs}]
	Let $\mu\in\calM$ and denote $\abs\mu\equalDef \mu_+ -\mu_- $ for its total variation. 
	For $x\in x$ we define $c_x\equalDef \sup_{y\in X} \abs{\Psi(x,y)\beta(y)}<\infty$
	and observe
	\begin{align}
		\label{eq:irkbs_dual_norm}
		\abs{f_{\mu}(x)}\le \int_X  \abs{\Psi(x,y) \beta(y)} \dx \abs\mu(y) \le c_x \TVnorm{\mu} \le c_x \norm{\mu}_{\calM} \norm{\id:\calM\to \calM(X)} < \infty.
	\end{align}

	Clearly, $E_{\calM,\Psi,\beta}$ is a vector space of functions $X\to \R$ and 
	$\norm{\mycdot}_{E_{\calM,\Psi,\beta}}$ defines a norm on $E_{\calM,\Psi,\beta}$.
	We show that  $(E_{\calM,\Psi,\beta},\norm\mycdot_{E_{\calM,\Psi,\beta}})$ is complete. To this end, define the subspace $\calN\equalDef\{\nu\in \calM\mid f_\nu=0\}\subset \calM$ and observe that $\calN$ is closed in $\calM$ as
	\begin{align*}
		\calN = \bigcap_{x\in X} \ker \phi_x
	\end{align*}
	holds. 
	Therefore, the quotient space $\calM/\calN$ is a Banach space. 
	By construction, the operator $A:\calM/\calN \to E_{\calM,\Psi,\beta}$ given by $[\mu]_{\sim}\mapsto f_{\mu}$ is well-defined and bijective. 
	Furthermore, $A$ is an isometry since for all $\mu\in\calM$ we have
	\begin{align}
		\nonumber
		\norm{f_\mu}_{E_{\calM,\Psi,\beta}} 
		= \inf\{ \norm \nu_{\calM} \mid \nu\in\calM, f_\nu=f_\mu\} = \inf\{ \norm\nu_{\calM}\mid \nu\in\calM, \nu-\mu \in \calN \}  
		= \norm{[\mu]_{\calM/\calN}}_{\calM/\calN}.\nonumber
	\end{align}
	Since $\calM/\calN$ is a Banach space, we hence conclude that $(E_{\calM,\Psi,\beta},\norm\mycdot_{E_{\calM,\Psi,\beta}})$ is a Banach space, too.
	
	Finally, we show that $E_{\calM,\Psi,\beta}$ is a proper BSF: Let $x\in X$ and $f_\mu\in E_{\calM,\Psi,\beta}$. By \eqref{eq:irkbs_dual_norm} 
	we then have 
	\begin{align*}
		\abs{f_\mu(x)}\le c_x  \norm{\id:\calM\to \calM(X)} \cdot   \norm\nu_{\calM}
	\end{align*}
	for all $\nu\in \calM$ with  $f_\nu=f_\mu$. This leads to 
	\begin{align*}
		\abs{f_\mu(x)}\le c_x  \norm{\id:\calM\to \calM(X)} \cdot  \norm{f_\mu}_{E_{\calM,\Psi,\beta}}
	\end{align*}
	by the definition of the norm 
	$\norm{\mycdot}_{E_{\calM,\Psi,\beta}}$, namely $\norm{f_\mu}_{E_{\calM,\Psi,\beta}} = \inf\{\norm \nu_{\calM} \mid \nu\in\calM,f_\nu=f_\mu\}$.
\end{proof}

\begin{proof}[Proof of \cref{lem:difference_of_kernels}]
	Since $\Psi$ is measurable and bounded, \cref{prop:irkbs} shows that  $E_{\calM,\Psi,1}$ is a proper BSF. 
	By \cite[Sect. I.6]{Aronszajn50a}, the kernel $k=k_1+k_2$ indeed corresponds to the RKHS $H_1+H_2$.
	
	For $i\in\{1,2\}$ and $\mu\in\calM$, our assumption  \eqref{eq:requirement_rkhs_decomp} ensures that the map $x\mapsto k_i(\mycdot,x)$ is weakly $\mu$-integrable with Pettis-integral $\int k_i(\mycdot,y) \dx\mu(y)\in H_i$, see \cite[Section 2]{StZi21a} for details.
	For all $x\in X$, this yields
	\begin{align*}
		\bra{\int k_i(\mycdot,y) \dx\mu(y)}(x) = \scal{k_i(\mycdot,x)}{\int k_i(\mycdot,y) \dx\mu(y)}_{H_i} 
		&=\int \scal{k_i(\mycdot,x)}{k_i(\mycdot,y)}_{H_i} \dx\mu(y) \\
		&= \int k_i(x,y) \dx\mu(y) \\
		&= f_{\mu,k_i,1}(x)\, ,
	\end{align*}
	and hence we find $f_{\mu,k_i,1}\in H_i$. We conclude $f_{\mu,\Psi,1} = f_{\mu,k_1,1} -f_{\mu,k_2,1} \in H_1 +H_2 =H$ and $	E_{\calM,\Psi,1} \subset H$. Since both $E_{\calM,\Psi,1}$ and $H$ are proper BSF, \cref{prop:restricted-inclusions} yields the continuous embedding $	E_{\calM,\Psi,1} \hookrightarrow H$.
\end{proof}

\donee

\subsubsection{Proofs for \texorpdfstring{\cref{subsubsection:isotropic_spaces}}{}}\label{def:slob_spaces}

For the definition of Slobodeckij spaces, we essentially follow \cite[2.1.2]{RuSi96},
but we note that our version of the Slobodeckij norm is actually only  norm-equivalent to the norm given there. As discussed previously,
neither our Question \eqref{eq:central_question} nor 2-factorability is affected by switching to equivalent norms.

To begin with, let
  $\Om\subset \Rd$ be a domain with smooth boundary and let $\ss\ge 0, p\ge 1$.
For the coherent set of multi-indices $\A\equalDef\{\al\in\N_0^d\mid \munorm{\al} \le \floor\ss\}$,
let $\preslospace \ss\Om$ be the set of $A$-times weakly differentiable functions defined in \cref{eq:sobolev_auxspace}.
If $\ss\in\N$,  
then we  define for $\fun\in\preslospace{\ss}{\Om}$ the   Sobolev norm just as in \cref{eq:mixed_sob_norm}, that is
\begin{align*}
	\slonorm \fun \ss \pp\Om \equalDef \sup_{\al\in\N_0^d,\munorm{\al} \le \ss} \lpnorm{\partial_\al\fun}{\pp}{\Om} \, .
\end{align*}
Recall that the classical Sobolev norm considers the $p$-sum of the involved terms $\lpnorm{\partial_\al\fun}{\pp}{\Om}$ instead.
Our definition gives, of course, an equivalent norm.

Moreover, if
 $\ss\not\in\N$, we write $\th\equalDef \ss-\floor\ss\in (0,1)$ and define, for $\fun\in\preslospace{\ss}{\Om}$, the Slobodeckij  norm by
\begin{align*}
	\slonorm{\fun}{\ss}{\pp}{\Om}\equalDef \max\Big\{ \sup_{\al\in\N_0^d,\munorm{\al}\le \floor\ss} \lpnorm{\partial_\al \fun}{\pp}{\Om}, \sup_{\al\in\N_0^d ,\munorm{\al}=\floor\ss} \slobodeckij{\partial_\al\fun}{\th}{\pp}{\Om}
	\Big\},
\end{align*}
where  for a measurable function $\gun:\Om\to\R$ the semi-norm $\slobodeckij{g}{\th}{p}{\Om}$ is defined as
\begin{align}
	\label{eq:slobodeckij_core}
	\slobodeckij{g}{\th}{p}{\Om}\equalDef 
	\bra[\Big]{ \int_{\Om}\int_{\Om} \frac{\abs{\gun(x)-\gun(y)}^p}{\norm{x-y}^{\th p + d}} \dx y \dx x }^{1/p}.
\end{align}
In both cases, we define the Slobodeckij space by
\begin{align*}
\slospace \ss \pp \Om \equalDef\set{\fun\in\preslospace{\ss}{\Om} : \slonorm{\fun}{\ss}{\pp}{\Om}<\infty}\, .
\end{align*}

To bound the semi-norm $\slobodeckij{g}{\th}{p}{\Om}$, the following well-known
formula, see e.g.~\cite[Satz 14.8]{Forster17}, which holds for all measurable functions $f:[0,\infty)\to [0,\infty)$ 
and all $0\leq a<b\leq \infty$,
turns out to be useful: 
\begin{align}\label{eq:int-of-radial}
\int_{\Rd} \eins_{[a,b]}(\snorm x) f(\snorm x) \dx x = d V_d \int_a^b f(r) r^{d-1} \dx r\, .
\end{align}
Here $V_d \equalDef \vol(\ball 10)$ is the Volume of the $d$-dimensional unit ball.
For later use we further note that for $z\in \R^d$ and $a \geq 1$, $\be>0$ this formula yields
	\begin{align} \nonumber
	  \int_{\ball {a}{z}} \int_{\Rd\setminus \ball {a+1/2}{z}} \frac{1}{\norm{x-y}^{\b + d}} \dx y \dx x
	&=
	  \int_{\ball {a}{0}} \int_{\Rd\setminus \ball {a+1/2}{0}} \frac{1}{\norm{x-y}^{\b + d}} \dx y \dx x \\ \nonumber
		&\leq
	 \int_{\ball {a}{0}} \int_{\Rd\setminus \ball {a+1/2}{0}} {{\bigl|\snorm x - \snorm y   \bigr| }^{-\b - d}} \dx y \dx x \\ \nonumber
		&\leq
	  \int_{\ball {a}{0}} \int_{\Rd\setminus \ball {a+1/2}{0}}{{\bigl( \snorm y  -a \bigr) }^{-\b - d}} \dx y \dx x \\ \nonumber
	&\leq  a^d  d V_d^2 \int_{a+1/2}^\infty (r-a)^{-\b -d} r^{d-1}   \dx r \\ \label{eq:int-of-radial-apl}
	&<  \frac{2^{\b+d} a^{2d} d V_d^2 }{\b} \, ,
	\end{align}
where in the last step we used $r < 2a (r-a)$ for all $r\geq a+1/2$.

\donee
\begin{lemma}\label{prop:slobo_finite}
    For all $d\geq 1$, $\pp\in [1,\infty)$,  and $\th\in (0,1)$, there exists a constant $c_{d,p,\th}\geq 0$ such that for all 
	open and bounded  $\Om\subset\Rd$ and all $f:\Om\to \R$ that are continuously differentiable with bounded derivative, we have 
	\begin{align*}
	\slobodeckij{ \fun}{\th}{\pp}\Om \leq c_{d,p,\th} |f|_1 (\vol(\Om))^{1/p} (\diam \Om)^{1-\th}\, .
	\end{align*}
\end{lemma}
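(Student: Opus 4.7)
The plan is to bound the integrand of $\slobodeckij{f}{\theta}{p}{\Omega}^p$ pointwise by using the mean value theorem and then evaluate the resulting radial integral. Since $f$ is continuously differentiable on $\Omega$ with bounded derivative and $\Omega$ is open (but not necessarily convex), one would first observe that for $x,y \in \Omega$ we still have the Lipschitz-type estimate $|f(x)-f(y)| \leq |f|_1 \|x-y\|$ via standard local arguments, or directly incorporate $|f|_1$ as the global Lipschitz constant (writing $|f|_1 = \sup_{x\neq y} |f(x)-f(y)|/\|x-y\|$, which is finite under the assumption). This yields
\begin{align*}
\frac{|f(x)-f(y)|^p}{\|x-y\|^{\theta p + d}} \;\leq\; |f|_1^p \, \|x-y\|^{(1-\theta)p - d}.
\end{align*}

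Next, for fixed $x \in \Omega$, the inner integral can be controlled by enlarging the domain to the ball $B(\diam \Omega, x)$, which contains $\Omega$. An application of the radial integration formula \eqref{eq:int-of-radial} then gives
\begin{align*}
\int_\Omega \|x-y\|^{(1-\theta)p - d} \dx y \;\leq\; d V_d \int_0^{\diam \Omega} r^{(1-\theta)p - 1} \dx r \;=\; \frac{d V_d}{(1-\theta)p} (\diam \Omega)^{(1-\theta)p},
\end{align*}
where the radial integral converges because $(1-\theta) p > 0$. Integrating once more over $x \in \Omega$ contributes a factor of $\vol(\Omega)$, so that
\begin{align*}
\slobodeckij{f}{\theta}{p}{\Omega}^p \;\leq\; |f|_1^p \, \frac{d V_d}{(1-\theta)p} \, (\diam \Omega)^{(1-\theta)p} \, \vol(\Omega).
\end{align*}

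Taking the $p$-th root yields the claimed inequality with the explicit constant $c_{d,p,\theta} = \bigl(\tfrac{d V_d}{(1-\theta) p}\bigr)^{1/p}$. The only subtle point is the justification of the Lipschitz bound on a possibly non-convex $\Omega$: if $\Omega$ is disconnected or has intricate geometry, one cannot directly connect $x$ and $y$ by a segment inside $\Omega$. However, the assumption ``continuously differentiable with bounded derivative'' in this context is naturally read as $\sup_{x\neq y} |f(x)-f(y)|/\|x-y\| \leq |f|_1 < \infty$ (i.e.\ $f$ has Lipschitz constant $|f|_1$), which is the version used everywhere else in the paper and is what makes the estimate go through without further convexity hypotheses.
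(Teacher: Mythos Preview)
Your proof is correct and follows essentially the same approach as the paper's own proof: use the Lipschitz bound $|f(x)-f(y)|\le |f|_1\|x-y\|$, enlarge the inner integral to the ball $B(\diam\Omega,x)$, evaluate the resulting radial integral via \eqref{eq:int-of-radial}, and integrate over $x\in\Omega$ to pick up the factor $\vol(\Omega)$, arriving at the identical constant $c_{d,p,\theta}=\bigl(\tfrac{dV_d}{(1-\theta)p}\bigr)^{1/p}$. Your remark on the Lipschitz bound for non-convex $\Omega$ is well taken; the paper handles this the same way, simply reading $|f|_1$ as the Lipschitz constant.
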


\begin{proof}
    Since the derivative of $f$ is bounded, $f$ is Lipschitz continuous with  constant $|f|_1<\infty$. 
    We write $\d\equalDef \diam \Om$. 
    For fixed $x\in \Om$ we then have $\snorm{x-y} \leq \d$ for all $y\in \Om$, that is $\Om\subset \ball \d x$.
    This yields
	\begin{align*}
		\slobodeckij \fun {\th} p \Om^p 
		=  
		\int_{\Om}\int_{\Om} \frac{\abs{\fun(x)-\fun(y)}^p}{\norm{x-y}^{{\th} p + d}} \dx y \dx x 
		&\leq 
		|f|_1^p  \int_{\Om}\int_{\Om} \frac{1}{\norm{x-y}^{{(\th - 1)} p + d}} \dx y \dx x  \\
		&\leq 
		|f|_1^p \int_{\Om}\int_{\ball \d x} \frac{1}{\norm{x-y}^{{(\th - 1)} p + d}} \dx y \dx x \\
		&= 
		|f|_1^p \int_{\Om}\int_{\ball \d 0} \frac{1}{\norm{z}^{{(\th - 1)} p + d}} \dx z \dx x \, .
	\end{align*}    
	Moreover, with the help of \eqref{eq:int-of-radial} we obtain
	\begin{align*}
	\int_{\ball \d 0} \frac{1}{\norm{z}^{{(\th - 1)} p + d}} \dx z 
	= 
	d V_d \int_0^\d   r^{-1 + (1-\theta)p} \dx r  
	= 
	d V_d \frac {\d^{(1-\theta)p} }{(1-\theta)p}\, ,
	\end{align*}
	and inserting this into the previous estimate then yields the   assertion since $\Om$ is bounded.
\end{proof}

Analogous to \cref{lemma:asymptotics_in_wsp_of_rescaled_bumps_simple_argument}, we establish the foundation for estimating the type 2 norm and cotype 2 norm of embeddings $\slospace{\ss}{\pone}{\Om} \hookrightarrow \slospace{\tt}{\ptwo}{\Om}$.

\donee

\begin{lemma}\label{lemma:asymptotics_in_wsp_of_rescaled_bumps}
	Let $\Omega\subset\Rd$ be open such that $\ball 10\subset \Om$, and let
	 $\al\in\N_0^d$, $\th\in (0,1)$, and $\pp\in[1,\infty)$.
	 Moreover, for $\d \in (0,1/2]$ and suitable $n\ge 1$  let $x_1,\dots,x_n\in \ball {1/2}0$ be a $3\d$-packing, and
		$\varepsilon_1,\dots,\varepsilon_n \in\set{-1,1}$.
	We define     $\hfun:\Om\to\R$ by
		\begin{align*}
			\hfun(x)\equalDef \sum_{i=1}^n \varepsilon_i\fun(\negpow\d{1} x - \negpow \d 1  x_i)\, ,
		\end{align*}
	where $f$ is the  bump function from \cref{eq:bump_function}. Moreover, we write  $\ss\equalDef \th +\munorm{\al}$.
	Then
	we have $\slobodeckij{\partial_\al\fun}{\th}{p}{\ball 10}>0$ and
	there exists a constant $c_{d,p,\th}>0$
	only depending on $d,p,\th$, such that
		\begin{align*}
			  \slobodeckij{\partial_\al\fun}{\th}{p}{\ball 10}  n^{1/p}\d ^{d/\pp-\ss}  \le \slobodeckij {\partial_\al \hfun}{\th}{\pp}\Om  \le c_{d,p,\th} \bigl(1+\slobodeckij{\partial_\al\fun}{\th}{p}{\ball {3/2}0}\bigr)  n^{1/p}\d ^{d/\pp-\ss} ~.
		\end{align*}
\end{lemma}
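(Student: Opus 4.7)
The core computational tool is the scaling identity for the Slobodeckij semi-norm: writing $G \equalDef \partial_\al \fun$ (a fixed smooth function supported in $\ovl{\ball 10}$ that vanishes to all orders on $\partial\ball 10$), the chain rule gives $\partial_\al\fun_i(x)=\d^{-\munorm\al}G(\d^{-1}(x-x_i))$, and the substitution $u=\d^{-1}(x-x_i)$, $v=\d^{-1}(y-x_i)$ yields
\begin{align*}
\int_{A_i\times A_i}\frac{\abs{\partial_\al\fun_i(x)-\partial_\al\fun_i(y)}^\pp}{\norm{x-y}^{\th \pp+d}}\dx x\dx y=\d^{d-\pp \ss}\int_{A\times A}\frac{\abs{G(u)-G(v)}^\pp}{\norm{u-v}^{\th \pp+d}}\dx u\dx v
\end{align*}
for every measurable $A\subset\Rd$ and its image $A_i\equalDef\d A+x_i$. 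The positivity $\slobodeckij G\th\pp{\ball 10}>0$ follows because $G$ is smooth, not identically zero, and vanishes on $\partial\ball 10$, hence cannot be constant on $\ball 10$.

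For the \textbf{lower bound} I would set $S_i\equalDef\ball\d{x_i}$. The $3\d$-packing property makes the $S_i$ pairwise disjoint, and $S_i\subset\ball 10\subset\Om$. On each $S_i$ the supports of the other $\fun_j$ do not meet $S_i$, so $\partial_\al\hfun=\varepsilon_i\partial_\al\fun_i$ there. Restricting the double integral defining $\slobodeckij{\partial_\al\hfun}{\th}{\pp}{\Om}^\pp$ to $\bigcup_i S_i\times S_i$ and applying the scaling identity yields $\slobodeckij{\partial_\al\hfun}{\th}{\pp}{\Om}^\pp\ge n\d^{d-\pp \ss}\slobodeckij G\th\pp{\ball 10}^\pp$. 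Taking $\pp$-th roots gives the claimed lower bound.

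For the \textbf{upper bound} I enlarge the patches to $\tilde S_i\equalDef\ball{3\d/2}{x_i}$: these have pairwise disjoint interiors (since $\norm{x_i-x_j}\ge 3\d$), they still miss $\supp\fun_j$ for $j\ne i$ (separation at least $\d/2$), and $\supp\partial_\al\hfun\subset\bigcup_i S_i\subset\bigcup_i\tilde S_i$. Extending $\partial_\al\hfun$ by zero to $\Rd$ (valid since $G$ vanishes smoothly on $\partial\ball 10$), I split $\Rd\times\Rd$ into four regions: (A) $x,y\in\tilde S_i$ for the same $i$; (B) $x\in\tilde S_i, y\in\tilde S_j$ with $i\ne j$; (C) exactly one of $x,y$ in $\bigcup_i\tilde S_i$; (D) neither, contributing $0$. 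By the scaling identity, (A) contributes exactly $n\d^{d-\pp \ss}\slobodeckij G\th\pp{\ball{3/2}0}^\pp$, the principal term on the right of the claim. Region (B) and the far part of (C) are controlled by the crude bound $\abs{\partial_\al\hfun}\le\d^{-\munorm\al}\supnorm G$ together with a far-field estimate in the spirit of \eqref{eq:int-of-radial-apl} and a dyadic packing count for the centers, giving $\sum_{j\ne i}\int_{\tilde S_j}\norm{x-y}^{-\th \pp-d}\dx y\lesssim\d^{-\th \pp}$ and hence a contribution of order $C\,n\,\d^{d-\pp \ss}\supnorm G^\pp$. The near part of (C), where $y$ lies within distance $\d/2$ of $\tilde S_i$ but outside it and the kernel is singular, is handled by the Lipschitz bound $\abs{\partial_\al\fun_i(x)-\partial_\al\fun_i(y)}\le\d^{-\munorm\al-1}\lipconst G\norm{x-y}$, which reduces the singularity to $\norm{x-y}^{(1-\th)\pp-d}$; integrability near the diagonal uses $\th<1$, and a further change of variables yields again the $\d^{d-\pp \ss}$ scale with prefactor $n$.

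The \textbf{main obstacle} is the near-boundary part of region (C): there $\partial_\al\hfun(y)=0$ while $\partial_\al\hfun(x)$ need not be small, yet $\norm{x-y}^{-\th\pp-d}$ is singular. The cancellation needed to recover the correct $\d$-scale comes entirely from the smoothness of $\fun$ across $\partial\ball 10$ (providing the Lipschitz control of $\partial_\al\fun_i$) together with the assumption $\th<1$. Collecting the three nonzero regions and absorbing the fixed norms $\supnorm G$, $\lipconst G$, and the other book-keeping factors into a constant depending on $d,\pp,\th$ (and the fixed bump $\fun$) produces the upper bound in the claimed form $c_{d,\pp,\th}\bigl(1+\slobodeckij{\partial_\al\fun}{\th}{\pp}{\ball{3/2}0}\bigr)\,n^{1/\pp}\d^{d/\pp-\ss}$.
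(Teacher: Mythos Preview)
Your proof is correct and uses the same decomposition into radius-$3\d/2$ patches as the paper, but two simplifications in the paper are worth noting. First, the paper performs one global substitution $x\mapsto\d^{-1}x$ at the very start (see \eqref{lemma:asymptotics_in_wsp_of_rescaled_bumps-h1}), reducing to unit-scale bumps centred at $z_i=\d^{-1}x_i$ with separation $\ge 3$; the factor $\d^{d-p\ss}$ then appears once and all subsequent estimates are $\d$-free, avoiding your scale-$\d$ bookkeeping. Second, and more substantively, your ``main obstacle'' is actually empty: whenever $x\in\tilde S_i$, $y\notin\bigcup_j\tilde S_j$, and $\norm{x-y}<\d/2$, the triangle inequality forces $\norm{x-x_i}>\d$, so $\partial_\al h(x)=0$ as well and the integrand vanishes. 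The paper exploits exactly this buffer-annulus structure: after rescaling, $g\equiv 0$ on $\ball{3/2}{z_i}\setminus\ball 1{z_i}$, whence the near-diagonal part of every cross integral is identically zero and all nonzero cross contributions satisfy $\norm{x-y}\ge 1/2$. Consequently the paper needs only the sup-norm bound on $g$ together with the radial far-field estimate \eqref{eq:int-of-radial-apl}; neither your Lipschitz argument nor your dyadic packing count for the centres is required.
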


\begin{proof}[Proof of \cref{lemma:asymptotics_in_wsp_of_rescaled_bumps}]
    Before we begin with the actual proof, we note 
		  that for any suitable set $\Th\subset \Rd$ and any sufficiently differentiable function $a:\Th\to\R$ 
		one has $\supp(\partial_\al a)\subset \supp(a)$, for any suitable $\al\in \N_0^d$.  This inclusion will be used 
		several times.

		Just as in the proof of \cref{lemma:asymptotics_in_wsp_of_rescaled_bumps_simple_argument} 
		we define $z_i\equalDef \d^{-1}x_i\in \ball{\d^{-1}\!/2}{0}\subset 
		\d^{-1} \Om$. Note that we have  $\norm{z_i-z_j}\ge 3$ for $i\ne j$ as well as the 
		 identity 
		\begin{align*}
			\bra{\partial_\al \hfun}(x)= \negpow\d{\munorm\al } \isum \varepsilon_i\partial_\al\fun\bra{\negpow \d 1x-z_i} \, .
		\end{align*}
		We define $f_i \equalDef \varepsilon_i\partial_\al\fun\bra{\mycdot-z_i}$ and 
		$\gun \equalDef \isum f_i$. For later use we note that $\supp f_i \subset \ball 1{z_i}$, and in particular the functions $f_1,\dots,f_n$ have mutually disjoint support.
		Moreover, by substitution we obtain
		\begin{align} \nonumber
			\slobodeckij{\partial_\al\hfun}{\th}{p}{\Om}^p
			&=\int_{\Om}\int_{\Om} \frac{\abs{\partial_\al\hfun(x)-\partial_\al\hfun(y)}^p}{\norm{x-y}^{\th p + d}} \dx y \dx x\\ \nonumber
			&=\d^{-p \munorm\al}\int_{\Om}\int_{\Om} 
			\frac{
				\abs[\big]{
					\isum \varepsilon_i\partial_\al\fun\bra{\negpow \d 1x-z_i}-\jsum \varepsilon_j\partial_\al\fun\bra{\negpow \d 1y-z_j}}^p
			}
			{\norm{x-y}^{\th  p + d}} 
			\dx y \dx x \\ \nonumber
			&=\d^{2d-p \munorm\al}\int_{\negpow \d 1\Om}\int_{\negpow \d 1\Om} 
			\frac{
				\abs[\big]{
					\gun\bra{x}-\gun\bra{y}}^p
			}
			{\norm{\d x-\d y}^{\th  p + d}} 
			\dx y \dx x \\ \label{lemma:asymptotics_in_wsp_of_rescaled_bumps-h1}
			&=\d^{d-p \ss}\int_{\negpow \d 1\Om}\int_{\negpow \d 1\Om} 
			\frac{
				\abs[\big]{
					\gun\bra{x}-\gun\bra{y}}^p
			}
			{\norm{x-y}^{\th  p + d}} 
			\dx y \dx x \, .
		\end{align}

	Our first goal is to establish the upper bound on $\slobodeckij{\partial_\al\hfun}{\th}{p}{\Om}^p$. To this end, 
	we write $H(x,x) \equalDef 0$ and 
	\begin{align*}
	H(x,y) \equalDef \frac{\abs{\gun\bra{x}-\gun\bra{y}}^p}{\norm{x-y}^{\th  p + d}}\, , \qquad x,y\in \Rd \mbox{ with $x\neq y$.}
	\end{align*}
	Moreover, we write $B_i \equalDef \ball {3/2}{z_i}$. Then, we have $\supp f_i \subset B_i$ and $\lb^d(B_i\cap B_j)= 0$ for all $i\neq j$.
	Finally, we define $B_0 \equalDef \Rd\setminus (B_1\cup\dots\cup B_n)$.
	Since
	we have $H(x,y) = H(y,x)\geq 0 $ for all $x,y\in \Rd$ and $H(x,y) = 0$ for all $x,y\in B_0$
	we then obtain 
	\begin{align*}
	\int_{\Rd}\int_{\Rd} H(x,y) \dx y \dx x
	&= \sum_{i=0}^n \sum_{j=0}^n \int_{B_i}\int_{B_j} H(x,y) \dx y \dx x \\
	&= \sum_{i=1}^n \int_{B_i}\int_{B_i} H(x,y) \dx y \dx x 
	+ \sum_{i=0}^n \sum_{j\neq i} \int_{B_i}\int_{B_j} H(x,y) \dx y \dx x  \\
	&= n \slobodeckij{\partial_\a \fun}{\th}{\pp}{\ball {3/2}{0}}  + \sum_{i=0}^n \sum_{j\neq i} \int_{B_i}\int_{B_j} H(x,y) \dx y \dx x\, ,
	\end{align*}
	where in the last step we used 
		\begin{align*}
		\int_{B_i}\int_{B_i} H(x,y) \dx y \dx x 
		&= 
		\int_{\ball {3/2}{z_i}}\int_{\ball {3/2}{z_i}} \frac{\abs{\partial_\al\fun\bra{x-z_i} -\partial_\al\fun\bra{y-z_i}}^p}{\norm{x-y}^{{\th} p + d}} \dx y \dx x \\ 
        &= 
		\int_{\ball {3/2}{0}}\int_{\ball {3/2}{0}} \frac{\abs{\partial_\al\fun\bra{x} -\partial_\al\fun\bra{y}}^p}{\norm{x-y}^{{\th} p + d}} \dx y \dx x \\
		&= \slobodeckij{\partial_\a \fun}{\th}{\pp}{\ball {3/2}{0}}^p \, .
		\end{align*}
	To treat the double sum, we first observe 
	\begin{align*}
	\sum_{i=0}^n \sum_{j\neq i} \int_{B_i}\int_{B_j} H(x,y) \dx y \dx x
	&=  \sum_{j=1}^n \int_{B_0} \int_{B_j} H(x,y) \dx y \dx x + \sum_{i=1}^n \sum_{j\neq i}\int_{B_i} \int_{B_j} H(x,y) \dx y \dx x \\
	&\leq \sum_{j=1}^n \int_{\Rd\setminus B_j} \int_{B_j} H(x,y) \dx y \dx x + \sum_{i=1}^n  \int_{B_i} \int_{\Rd\setminus B_i} H(x,y) \dx y \dx x \\
	&= 2 \sum_{i=1}^n  \int_{B_i} \int_{\Rd\setminus B_i} H(x,y) \dx y \dx x\, .
	\end{align*}
	In addition, we have 
	\begin{align*}
	 \int_{B_i} \int_{\Rd\setminus B_i} H(x,y) \dx y \dx x  
	& = 
	 \int_{\ball 1{z_i}} \int_{\Rd\setminus B_i} H(x,y) \dx y \dx x \\
	&\qquad  +  \int_{B_i\setminus \ball 1{z_i}} \int_{\Rd\setminus \ball {2}{z_i}} H(x,y) \dx y \dx x \\
	&\qquad   +  \int_{B_i\setminus \ball 1{z_i}} \int_{\ball {2}{z_i}\setminus B_i} H(x,y) \dx y \dx x \, .
	\end{align*}
	In the following, we estimate these three double  integrals.  
	The first one can be estimated by 
	\begin{align*}
	\int_{\ball 1{z_i}} \int_{\Rd\setminus B_i} H(x,y) \dx y \dx x 
	&= 
	\int_{\ball 1{z_i}} \int_{\Rd\setminus \ball {3/2}{z_i}} \frac{\abs{\gun\bra{x}-\gun\bra{y}}^p}{\norm{x-y}^{\th  p + d}} \dx y \dx x \\
	&\leq 
	\int_{\ball 1{z_i}} \int_{\Rd\setminus \ball {3/2}{z_i}} \frac{2^p\inorm g^p}{\norm{x-y}^{\th  p + d}} \dx y \dx x \\
	&\leq  
	\frac{2^{p(1+\th)+d}  d V_d^2 }{\th p} \, ,
	\end{align*}
	where we used $\inorm g = \inorm f = 1$ and \eqref{eq:int-of-radial-apl}. %
	The second double integral  can be analogously estimated by 
	\begin{align*}
	 \int_{B_i\setminus \ball 1{z_i}} \int_{\Rd\setminus \ball {2}{z_i}} H(x,y) \dx y \dx x
	&= 
	\int_{B_i\setminus \ball 1{z_i}} \int_{\Rd\setminus \ball {2}{z_i}}\frac{\abs{\gun\bra{x}-\gun\bra{y}}^p}{\norm{x-y}^{\th  p + d}} \dx y \dx x \\
	&=
	\int_{B_i\setminus \ball 1{z_i}} \int_{\Rd\setminus \ball {2}{z_i}}\frac{\abs{\gun\bra{y}}^p}{\norm{x-y}^{\th  p + d}}\dx y \dx x \\
	&\leq
	  \int_{\ball {3/2}{z_i}} \int_{\Rd\setminus \ball {2}{z_i}} \frac{1}{\norm{x-y}^{\th  p + d}} \dx y \dx x \\
	&\leq
	\frac{2^{\th p+ 3d}  d V_d^2 }{\th p} \, .
	\end{align*}	
	For the third double integral we observe that for $x\in B_i\setminus \ball 1{z_i}$ we have $g(x) = 0$ and for $y\in \ball {2}{z_i}\setminus B_i$ we also have $g(y)= 0$. This shows 
	\begin{align*}
	\int_{B_i\setminus \ball 1{z_i}} \int_{\ball {2}{z_i}\setminus B_i} H(x,y) \dx y \dx x = 0\,.
	\end{align*}
	Combining these considerations, we find  
	\begin{align*}
	\slobodeckij{\partial_\al\hfun}{\th}{p}{\Om}^p 
	\leq \d^{d- ps} n \slobodeckij{\partial_\a \fun}{\th}{\pp}{\ball {3/2}{0}}^p
	+2  \d^{d- ps} n \Bigl( \frac{2^{p(1+\th)+d}  d V_d^2 }{\th p} + \frac{2^{\th p+3d} d V_d^2 }{\th p} \Bigr)    \, .
	\end{align*}

	Finally we establish the lower bound for the Slobodeckij norm of $\hfun$ using the fact that $f_1,\dots,f_n$ have mutually disjoint support: 
		\begin{align*}
			\int_{\negpow \d 1\Om}\int_{\negpow \d 1\Om}
			\frac{\abs{\gun(x)-\gun(y)}^p	}
			{\norm{x-y}^{\th  p + d}} 
			\dx y \dx x
			\ge&\, \isum \int_{\ball 1{z_i}} \int_{\ball 1{z_i}} 
			\frac{\abs{\gun(x)-\gun(y)}^p	}
			{\norm{x-y}^{\th  p + d}} 
			\dx y \dx x\\
			=&\, \isum \int_{\ball 1{z_i}} \int_{\ball 1{z_i}} \frac{
				\abs{
					\varepsilon_i\partial_\al\fun\bra{x-z_i}- \varepsilon_i\partial_\al\fun\bra{y-z_i}}^p
			}
			{\norm{x-y}^{\th  p + d}} 
			\dx y \dx x\\
			=&\,n \int_{\ball 10} \int_{\ball 10} \frac{
			\abs{
			\partial_\al\fun\bra{x}- \partial_\al\fun\bra{y}}^p
			}
			{\norm{x-y}^{\th  p + d}} 
			\dx y \dx x
			\\
			=&\, n\slobodeckij{\partial_\al\fun}{\th}{p}{\ball 10}^p.
		\end{align*}
		Moreover,
		since $\partial_\al\fun$ is continuous and not constant, we have $\slobodeckij{\partial_\al\fun}{\th}{p}{\ball 10}>0$.
\end{proof}

Now, we are able to estimate the Rademacher sequence norm and the $\ell_2$-sequence norm for bump functions in Slobodeckij spaces.

\begin{lemma}\label{lemma:slobo_rad_and_elltwo}
	Let $\Omega\subset\Rd$ be open such that $\ball 10\subset \Om$ and let $\ss\ge 0$, $\pp\in [1,\infty)$.
		For $\d\in(0,1/2]$ and suitable  $n\geq 1$, let $x_1,\dots,x_n\in \ball{1/2}0$ be a $3\d$-packing and define $\fun_i:\Om\to\R$ by
		\begin{align*}
			\fun_i(x)\equalDef \fun(\d^{-1} x -\d^{-1} x_i),
		\end{align*}
		where $\fun$ is the bump function considered in \cref{eq:bump_function}. 
		Then,
		there exist   constants $c>0$ and $C>0$ that are independent of $\d$ and $n$ such that we have
		\begin{align*}
			c
			n^{1/\pp} \d^{d/\pp -\ss} 
			\le&\, \radnorm{(\fun_1,\dots,\fun_n)}n{\slospace \ss\pp\Om} 
			\le C
			n^{1/\pp} \d^{d/\pp -\ss} 
			\intertext{and}
			c
			n^{1/2} \d^{d/\pp -\ss} \le&\, \elltwonorm{(\fun_1,\dots,\fun_n)}n{\slospace \ss\pp\Om}\le C n^{1/2} \d^{d/\pp -\ss} \, .
		\end{align*}
	\end{lemma}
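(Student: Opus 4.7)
My plan is to reduce the lemma to the two preceding scaling lemmas (\cref{lemma:asymptotics_in_wsp_of_rescaled_bumps_simple_argument,lemma:asymptotics_in_wsp_of_rescaled_bumps}) applied to $h \equalDef \sum_{i=1}^n \e_i \fun_i$ uniformly in the choice of signs $(\e_i)$. Throughout, the key scaling observation is that since $\d \le 1/2 \le 1$, we have $\d^{d/\pp-\munorm\al} \le \d^{d/\pp-\ss}$ for every $\al$ with $\munorm\al\le \ss$, so the dominant term in the Slobodeckij norm will always come from the top smoothness level.

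First, in the integer case $\ss\in\N$, the norm reduces to $\snorm h_{\slospace \ss\pp\Om} = \sup_{\munorm\al\le \ss}\lpnorm{\partial_\al h}\pp\Om$, and \cref{lemma:asymptotics_in_wsp_of_rescaled_bumps_simple_argument} gives the exact identity $\lpnorm{\partial_\al h}\pp\Om = n^{1/\pp}\d^{d/\pp-\munorm\al}\lpnorm{\partial_\al \fun}\pp\Om$. Combined with the scaling observation this yields an upper bound $C n^{1/\pp}\d^{d/\pp-\ss}$ and, by restricting to a fixed $\al_0$ with $\munorm{\al_0}=\ss$ and using $\partial_{\al_0}\fun\neq 0$, a matching lower bound of the same order.

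In the non-integer case $\ss\notin\N$, write $\th\equalDef \ss-\floor \ss$. The integer-derivative contribution is handled as above and is of order at most $n^{1/\pp}\d^{d/\pp-\floor \ss}\le n^{1/\pp}\d^{d/\pp-\ss}$. The fractional seminorm $\slobodeckij{\partial_\al h}\th \pp\Om$ with $\munorm\al=\floor \ss$ is controlled two-sided by \cref{lemma:asymptotics_in_wsp_of_rescaled_bumps}, which delivers bounds of exact order $n^{1/\pp}\d^{d/\pp-\ss}$ (the $1$ inside the upper bound there is absorbed into the constant since the integer-derivative contribution already provides a comparable term). Since both estimates on $\snorm h_{\slospace \ss\pp\Om}$ hold uniformly in $(\e_i)\in\set{-1,1}^n$, taking expectation immediately yields the Rademacher norm bounds.

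For the $\ell_2$-sequence norm, the same two lemmas applied with $n=1$ to each $\fun_i$ show that $\snorm{\fun_i}_{\slospace \ss\pp \Om}$ is of order $\d^{d/\pp-\ss}$, independent of $i$ because the $\fun_i$ are disjointly supported translated copies of a single rescaled bump. The $\ell_2$-sum of $n$ equal terms then produces the factor $n^{1/2}\d^{d/\pp-\ss}$ with matching upper and lower constants. The main (mild) technical care point is the case distinction between $\ss\in\N$ and $\ss\notin\N$, together with the bookkeeping that lower-order derivative contributions are absorbed into the $\d^{d/\pp-\ss}$ scaling; no new integration estimates are required beyond those already performed in the two preceding lemmas.
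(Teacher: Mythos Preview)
Your proposal is correct and follows essentially the same approach as the paper: split into the cases $\ss\in\N$ and $\ss\notin\N$, invoke \cref{lemma:asymptotics_in_wsp_of_rescaled_bumps_simple_argument} for the integer-order derivatives and \cref{lemma:asymptotics_in_wsp_of_rescaled_bumps} for the fractional seminorm, use $\d\le 1/2$ to see that the top smoothness level dominates, and then apply the $n=1$ case for the $\ell_2$-sequence bounds. The only cosmetic difference is that the paper packages the integer case by citing \cref{lem:radnorm-mixed-sobol} rather than redoing the argument inline.
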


\begin{proof}
	In the case $s\in \N$, the assertion has already be shown in Lemma \ref{lem:radnorm-mixed-sobol}. In the following we thus assume
	$s\not\in \N$ and define $\th\equalDef \ss-\floor\ss$. Finally,
	we fix some $\al_0\in\N_0^d$ with $\munorm{\al_0} =\floor \ss$.

	Now \cref{lemma:asymptotics_in_wsp_of_rescaled_bumps} directly yields the lower bound on the Rademacher norm
	\begin{align*}
		\slobodeckij{\partial_{\al_0}\fun}{\th}{p}{\ball 10}  n^{1/p}\d ^{d/\pp-\ss}   \le \E_{\radsequence{\varepsilon}} \slonorm{\varepsilon_1 \fun_1 +\dots +\varepsilon_n \fun_n}{\ss}{\pp}{\Om} =  \radnorm{(\fun_1,\dots,\fun_n)}n{\slospace \ss\pp\Om}.
	\end{align*}
	For the corresponding upper bound we write
	$C_{\a,d,p,\th}\equalDef  c_{d,p,\th}  (1+\slobodeckij{\partial_\al\fun}{\th}{p}{\ball {3/2}0})$
	From \cref{lemma:asymptotics_in_wsp_of_rescaled_bumps_simple_argument,lemma:asymptotics_in_wsp_of_rescaled_bumps}
	we then obtain
	\begin{align*}
		\radnorm{(\fun_1,\dots,\fun_n)}{n}{\slospace \ss\pp\Om}
		&= \E_{\radsequence{\varepsilon}} \slonorm{\varepsilon_1 \fun_1+\dots +\varepsilon_n \fun_n}{\ss}{\pp}{\Om}\\
		&\le\,\max\{\sup_{\munorm\al\le \floor\ss} \lpnorm{\partial_\al \fun}{\pp}{\Om} n^{1/\pp}\d^{d/\pp -\munorm\al}
		,
		\sup_{\munorm\al=\floor\ss} C_{\a,d,p,\th} n^{1/\pp} \d^{d/\pp -\ss}  \}.
	\end{align*}
	Since $\d\le 1/2 $, we see that the asserted upper bound for the Rademacher norm holds for sufficiently large $C>0$.
	
	Moreover, applying \cref{lemma:asymptotics_in_wsp_of_rescaled_bumps_simple_argument,lemma:asymptotics_in_wsp_of_rescaled_bumps} in the case $n=1$ shows for all $i=1,\dots , n$ the estimate
	\begin{align*}
		\slobodeckij{\partial_{\al_0}\fun}{\th}{p}{\ball 10}     \d^{d/\pp -\ss} \le
		\, \slonorm{\fun_i }{\ss}{\pp}{\Om}
		&\le
		\max\{\sup_{\munorm\al\le \floor\ss} \lpnorm{\partial_\al \fun}{\pp}{\Om} \d^{d/\pp -\munorm\al}
		,
		\sup_{\munorm\al=\floor\ss}\d^{d/\pp -\ss} C_{\a,d,p,\th}\}\\
		&\leq C  \d^{d/\pp -\ss} \, .
	\end{align*}
	From this we easily obtain the bounds on the $\ell_2$-sequence norms.
\end{proof}

The following theorem relates different spaces of Besov-Triebel-Lizorkin type to each other and is at the heart of the constructive statement in \cref{lemma:slobotwofac}.

\donee
\begin{theorem}\label{theorem:triebels_embeddings}
	Let $\Om\subset \Rd$ be a bounded domain with smooth boundary and
	let $s,t\geq 0$, $p,p_1,p_2\in [1,\infty]$, and $q,q_1,q_2\in [1,\infty]$.
	Then the following statements hold true:
	\begin{enumerate}
		\item 
		\textbf{Lowering the smoothness   to increase the integration index.}
		If $\ss>\tt$, then we have
		\begin{alignat*}{2}
			\triebelspace{\ss}{\pone}{\qone} \Om &\hookrightarrow\triebelspace{\tt}{\ptwo}{\qtwo} \Om\, , \qquad&& \text{ if }\ss-\tt \ge \frac{d}{\pone} -\frac{d}{\ptwo} \mbox{ and } p_1,p_2<\infty,\\
			\besovspace{\ss}{\pone}{\qone} \Om &\hookrightarrow \besovspace{\tt}{\ptwo}{\qtwo} \Om\, , && \text{ if }\ss-\tt > \frac{d}{\pone} -\frac{d}{\ptwo}.
		\end{alignat*}
		\item 
		\textbf{Lowering the integration index.}
		If $\pone\ge \ptwo$, then we have
		\begin{align*}
			\triebelspace{\ss}{\pone}{q}\Om &\hookrightarrow\triebelspace{\ss}{\ptwo}{q}\Om\, , \qquad& \text{ if } p_1,p_2,q<\infty, \\
			\besovspace{\ss}{\pone}{q}\Om&\hookrightarrow \besovspace{\ss}{\ptwo}{q}\Om \, .
		\end{align*}
		\item 
		\textbf{Changing the fein index.} If $\qone\le\qtwo$, then
		for all $\e>0$ we have
		\begin{align*}
			\triebelspace{\ss}{p}{\qone}\Om &\hookrightarrow\triebelspace{\ss}{p}{\qtwo}\Om\, , \qquad& \text{ if } p<\infty, \\
			\triebelspace{\ss+\varepsilon}{p}{\infty}\Om &\hookrightarrow\triebelspace{\ss}{p}{q}\Om\, , \qquad& \text{ if } p<\infty, \\
			\besovspace{\ss}{p}{\qone}\Om &\hookrightarrow\besovspace{\ss}{p}{\qtwo}\Om,\\
			\besovspace{\ss+\varepsilon}{p}{\infty}\Om &\hookrightarrow\besovspace{\ss}{p}{q}\Om.
		\end{align*}
		\item 
		\textbf{Changing between Besov and Triebel spaces.}
		If $\ss>\tt$ and $\ss-\tt> d(1/\pone -1/\ptwo)$, then we have
		\begin{align*}
			\triebelspace \ss\pone \qone \Om&\hookrightarrow \besovspace \tt\ptwo\qtwo\Om, \qquad&\text{ if } \pone <\infty,\\
			\besovspace \ss\pone \qone \Om&\hookrightarrow \triebelspace \tt\ptwo\qtwo\Om\qquad&\text{ if } \ptwo <\infty.
		\end{align*} 
	\end{enumerate}
\end{theorem}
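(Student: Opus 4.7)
The plan is to split the proof into three steps: the existence of the embedding $\spaceone\ss\pp\qq\Om \hookrightarrow C^0(\Om)$, the sufficient part \emph{i)} which constructs an explicit 2-factorization, and the necessary part \emph{ii)} which obstructs 2-factorability via cotype 2. The existence of the embedding into $C^0(\Om)$ under the assumption $\ss>d/\pp$ is the standard Sobolev-type embedding for Besov-Triebel-Lizorkin spaces already cited from \cite[Sect.~2.2.4]{RuSi96} in the paragraph preceding the theorem, so no additional work is required there.

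For part \emph{i)}, I fix any $\uu\in(d/2,\ss-\czero)$, which is possible since $\ss>\czero+d/2$. Because $\uu>d/2$, the identity $\sobspace{\uu}{2}{\Om}=\triebelspace{\uu}{2}{2}{\Om}$ from \eqref{eq:triebelid-sob} together with the usual Sobolev embedding recalled right after \eqref{eq:triebelid-sob} shows that $\sobspace{\uu}{2}{\Om}$ is a Hilbert space of continuous functions, hence an RKHS with $\sobspace{\uu}{2}{\Om}\hookrightarrow C^0(\Om)$. It remains to obtain $\spaceone\ss\pp\qq\Om\hookrightarrow\triebelspace{\uu}{2}{2}{\Om}$, which I deduce from \cref{theorem:triebels_embeddings}: the decisive inequality $\ss-\uu>\czero\ge d/\pp-d/2$ supplies the required smoothness-versus-integrability trade-off, and in the Triebel-Lizorkin case I apply part \emph{i)} (no condition on the fein index appears) while in the Besov case I combine the Besov-to-Triebel step \emph{iv)} with monotonicity of the fein index from \emph{iii)}.

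For part \emph{ii)}, I first reduce to the non-trivial regime. If $\pp\le 2$, then $\czero=d/\pp-d/2$, so $\czero+d/2=d/\pp$ and the desired inequality $\ss\ge\czero+d/2$ is already implied by the hypothesis $\ss>d/\pp$. The substantive case is thus $\pp\ge 2$, where $\czero=0$ and I must derive $\ss\ge d/2$ from 2-factorability. By \cref{cor:nec-2-fact} the inclusion $\id:\spaceone\ss\pp\qq\Om\to C^0(\Om)$ is of cotype 2, and I probe this operator with the bump construction already used in the paper: for $\d\in(0,1/2]$ I take a maximal $3\d$-packing $x_1,\dots,x_n\in\ball{1/2}0$, so that $n\asymp\d^{-d}$ by \eqref{eq:cov-num-Rd}, and set $\fun_i(x)\equalDef\fun(\d^{-1}x-\d^{-1}x_i)$ with $\fun$ the bump of \eqref{eq:bump_function}. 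The disjoint supports give $\elltwonorm{(\fun_1,\dots,\fun_n)}{n}{C^0(\Om)}=n^{1/2}\supnorm{\fun_1}$. For the denominator I transfer the Rademacher estimate from \cref{lemma:slobo_rad_and_elltwo}: for any $\ss'>\ss$ an embedding $\slospace{\ss'}{\pp}{\Om}\hookrightarrow\spaceone\ss\pp\qq\Om$ holds via \cref{theorem:triebels_embeddings}, which yields $\radnorm{(\fun_1,\dots,\fun_n)}{n}{\spaceone\ss\pp\qq\Om}\lesssim n^{1/\pp}\d^{d/\pp-\ss'}$. Plugging both bounds into the cotype 2 inequality and using $n\asymp\d^{-d}$ produces, for $\pp\ge 2$, a lower bound of order $\d^{\ss'-d/2}$ on the cotype 2 norm; this must remain bounded as $\d\to 0^+$, so $\ss'\ge d/2$, and letting $\ss'\downarrow\ss$ concludes $\ss\ge d/2$.

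The main obstacle I expect is the upper bound $\radnorm{(\fun_1,\dots,\fun_n)}{n}{\spaceone\ss\pp\qq\Om}\lesssim n^{1/\pp}\d^{d/\pp-\ss'}$ in full generality. Since \cref{lemma:slobo_rad_and_elltwo} is only stated for Slobodeckij spaces, my use of the auxiliary embedding $\slospace{\ss'}{\pp}{\Om}\hookrightarrow\spaceone\ss\pp\qq\Om$ requires checking the appropriate variant of \cref{theorem:triebels_embeddings} across the possible types (Besov versus Triebel-Lizorkin for $\spaceone$) and choices of fein index in the Slobodeckij identification \eqref{eq:triebelid-slobo}; this is precisely the role of the $\varepsilon$-approximation machinery announced in \cref{lemma:epsapprox}, which I intend to invoke to treat all cases uniformly.
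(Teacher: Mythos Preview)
Your proposal does not address the stated theorem. The statement you were given is \cref{theorem:triebels_embeddings}, a collection of classical embedding results for Besov and Triebel--Lizorkin spaces (lowering smoothness, lowering the integration index, changing the fein index, switching between Besov and Triebel--Lizorkin scales). Your proposal, however, is a proof of \cref{lemma:besov_c0_twofac}: you discuss the embedding $\spaceone\ss\pp\qq\Om\hookrightarrow C^0(\Om)$, construct a 2-factorization over $\sobspace{\uu}{2}{\Om}$ as a sufficient condition, and derive $\ss\ge\czero+d/2$ as a necessary condition via a cotype~2 argument with bump functions. None of this bears on the four parts of \cref{theorem:triebels_embeddings}.

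For comparison, the paper's proof of \cref{theorem:triebels_embeddings} is essentially bibliographic: parts \emph{i)}, \emph{ii)}, \emph{iii)} are direct citations to the standard references (the theorem on p.~196/7 and Prop.~2 on p.~47 of Triebel's monograph, with the restriction to bounded domains handled via \cite[Sec.~2.4.4]{RuSi96}). Only part \emph{iv)} receives an argument, and it is a two-line derivation: one uses the identity $\triebelspace\ss\pp\pp\Om=\besovspace\ss\pp\pp\Om$ from \eqref{eq:triebel-besov-identity} and applies part \emph{i)} twice, passing through an intermediate space $\triebelspace{\ss-\varepsilon/2}{\pone}{\pone}{\Om}=\besovspace{\ss-\varepsilon/2}{\pone}{\pone}{\Om}$ with $\varepsilon\equalDef\ss-\tt-d(1/\pone-1/\ptwo)>0$. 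There is no 2-factorability, no cotype, and no bump-function construction involved.
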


\begin{proof}
	\ada i See e.g.~\cite[Parts (i) and (ii) of the theorem on p.~196/7]{Triebel83}.
	
	\ada  {ii} See  e.g.~\cite[Part (iii) of the theorem  on p.~196/7]{Triebel83}.
	
	\ada {iii} In the case  $\Om = \Rd$, these embeddings can be found in e.g.~\cite[Prop.~2 on p.~47]{Triebel83}.
	The restriction to bounded domains is discussed in e.g.~\cite[Sec.~2.4.4]{RuSi96}.

	\ada {iv}  We only show $\triebelspace \ss\pone \qone \Om\hookrightarrow \besovspace \tt\ptwo\qtwo\Om$, the other embedding can be proven analogously. 

	Recall the identity $\triebelspace \ss\pp\pp\Om =\besovspace \ss\pp\pp\Om$ for $\pp<\infty$, see \cref{eq:triebel-besov-identity}. 
	We set $\varepsilon \equalDef \ss-\tt - d(1/\pone -1/\ptwo)>0$.
	By \emph{i)} we obtain $\triebelspace \ss\pone\qone\Om \hookrightarrow \triebelspace {\ss-\varepsilon/2}\pone \pone \Om =\besovspace {\ss-\varepsilon/2}\pone\pone \Om$
	as well as $\besovspace {\ss-\varepsilon/2}\pone\pone \Om \hookrightarrow \besovspace \tt\ptwo\qtwo\Om$.
\end{proof}

\donee
\begin{proof}[Proof of \cref{lemma:slobotwofac}]
	We first note that Part \emph{i)} of \cref{theorem:triebels_embeddings}
	together with \cref{eq:triebelid-slobo} gives
	\begin{align}\label{lemma:slobotwofac-h1}
	\slospace{\ss}{\pone}{\Om}
	= \triebelspace{\ss}{\pone}{\qone}{\Om}
	\hookrightarrow\triebelspace{\tt}{\ptwo}{\qtwo}{\Om}
	= \slospace{\tt}{\ptwo}{\Om}\, ,
	\end{align}
	where $\qone$ and $\qtwo$ are  defined according to \cref{eq:triebelid-slobo}.

	\ada i
	Let us fix a $\uu\in (\tt+\ctwo,\ss-\cone)$. Then Part \emph{i)} of \cref{theorem:triebels_embeddings} shows
	\begin{align*}
	\triebelspace{\ss}{\pone}{\qone} \Om
	\hookrightarrow
	\triebelspace{u}{2}{2} \Om
	\hookrightarrow
	\triebelspace{\tt}{\ptwo}{\qtwo}\Om \, .
	\end{align*}
	Combining this with \eqref{lemma:slobotwofac-h1} and \eqref{eq:triebelid-sob} yields the assertion. 
	
	\ada {ii}
	By a translation and scaling argument, we can assume $\ball 10\subset \Om$ without loss of generality.
	Let us fix a
	  $0<\d\le 1/2$.  We define $n\equalDef\paceuclid{\d}{\ball{1/2}{0}}$ and
	  choose $\fun_1,\dots,\fun_n$ as in \cref{lemma:slobo_rad_and_elltwo}.
	  Then \cref{lemma:slobo_rad_and_elltwo} gives
	\begin{align*}
		 \typenorm{\slospace \ss\pone\Om\hookrightarrow\slospace\tt\ptwo\Om} 
		 \ge \frac{ \radnorm{(\fun_1,\dots,\fun_n)}{n}{\slospace \ss\pone \Om} }{\elltwonorm{(\fun_1,\dots,\fun_n)}{
		 	n}{\slospace \tt\ptwo \Om}}
	 	\ge \ha C n^{1/\ptwo -1/2} \d^{\ss-\tt -d(1/\pone -1/\ptwo))}
	\end{align*}
	for constant $\ha C>0$ that is independent of $\d$ and $n$.
	Using the packing number bound \eqref{eq:cov-num-Rd} we thus find a constant $\bar C>0$ such that
	\begin{align*}
		\typenorm{\slospace \ss\pone\Om\hookrightarrow\slospace\tt\ptwo\Om} 
		\ge
		\bar C \d^{\ss-\tt +d(1/2 -1/\pone)}
	\end{align*}
	holds for all $\d\in(0,1/2]$.
	Now, \cref{lem:2fact-implies-twotype} in combination with $s>t$ implies $\ss-\tt\ge \cone$.
	Using a cotype 2 argument, we analogously obtain the requirement $\ss-\tt\ge \ctwo$.
	
	It remains to show that $\ss-\tt\ge \cone+\ctwo$ holds. If $\cone=0$ or $\ctwo=0$, this follows directly from our previous considerations. 
	Moreover, if both expressions are positive, we have
	\begin{align*}
		\cone +\ctwo = d(1/\pone -1/\ptwo),
	\end{align*}
	and the claim holds by assumption.
\end{proof}

		The following lemma shows that
		Slobodeckij spaces are \emph{dense }in the family of
		Besov-Triebel-Lizorkin spaces in a suitable way.

\begin{restatable}{lemma}{epsapproxwithslobospaces}\label{lemma:epsapprox}
	Let $\Om\subset\Rd$ be a bounded  domain with smooth boundary, $s> 0$, and let
	$\spaceone{\ss}{\pp}{\qq}{\Om}$ be a Besov-Triebel-Lizorkin space.
	Then, for any $\varepsilon_1,\varepsilon_2>0$ such that $\min\{1,\ss\}>\varepsilon_1>\varepsilon_2$ 
	there exist non-integer smoothness  
	parameters $0< \ch\ss < \ss <\ha\ss$  and integration parameters $\ha\pp,\ch\pp \in [1,\infty)$
	fulfilling
	\begin{align}
		\label{eq:epsapprox_spaces}
		\slospace{\ha\ss}{\ha\pp}{\Om} \hookrightarrow \spaceone{\ss}{\pp}{\qq}{\Om}\hookrightarrow \slospace{\ch\ss}{\ch\pp}{\Om},
	\end{align}
	and %
	\begin{align}
		\label{eq:epsapproximation}
		\varepsilon_1\ge  \max\{\ha  \ss -\ss,\ss-\ch\ss \}\ge \min\{\ha  \ss -\ss,\ss-\ch\ss\}\ge \varepsilon_2\ge  \max\bigl\{\abs{d/\pp-d/\ha\pp},\abs{d/\pp-d/\ch\pp}\bigr\} \, .
	\end{align}
\end{restatable}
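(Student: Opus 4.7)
The plan is to construct the sandwich \eqref{eq:epsapprox_spaces} by explicit choice of parameters, relying on \cref{theorem:triebels_embeddings} together with the identifications \eqref{eq:triebelid-slobo} and \eqref{eq:triebel-besov-identity}. First I would pick non-integer $\ha s \in [s + \varepsilon_2, s + \varepsilon_1]$ and non-integer $\ch s \in [s - \varepsilon_1, s - \varepsilon_2]$. Each of these intervals has positive length $\varepsilon_1 - \varepsilon_2 > 0$ and therefore contains non-integer points, and $\ch s > 0$ follows from the standing assumption $\varepsilon_1 < s$. This already secures the smoothness part of the chain \eqref{eq:epsapproximation}.

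Next I would split into cases depending on the nature of $\spaceone{s}{p}{q}{\Om}$. In the case $p < \infty$---which covers every Triebel-Lizorkin space and every Besov space with finite integration index---I choose $\ha p = \ch p = p$, so the outermost bound in \eqref{eq:epsapproximation} holds trivially. Using \eqref{eq:triebelid-slobo}, together with \eqref{eq:triebel-besov-identity} in the Besov subcase, the two Slobodeckij spaces identify with $\triebelspace{\ha s}{p}{p}{\Om} = \besovspace{\ha s}{p}{p}{\Om}$ and $\triebelspace{\ch s}{p}{p}{\Om} = \besovspace{\ch s}{p}{p}{\Om}$. Both embeddings in \eqref{eq:epsapprox_spaces} then follow from part \emph{i)} of \cref{theorem:triebels_embeddings} (applied in its Triebel or Besov version, as appropriate), since with matched integration indices the smoothness-dimension condition reduces to $\ha s > s > \ch s$.

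In the remaining case $\spaceone{s}{p}{q}{\Om} = \besovspace{s}{\infty}{q}{\Om}$ we have $d/p = 0$, and I would pick $\ha p, \ch p \in [1, \infty)$ large enough that $d/\ha p, d/\ch p < \varepsilon_2$ and moreover $d/\ha p < \ha s - s$; this can be arranged simultaneously because $\ha s - s \ge \varepsilon_2 > 0$ and we may take $d/\ha p$ as far below $\varepsilon_2$ as we wish. The identification \eqref{eq:triebelid-slobo} then gives $\slospace{\ha s}{\ha p}{\Om} = \triebelspace{\ha s}{\ha p}{\ha p}{\Om}$ and similarly for $\ch p$, and the two embeddings in \eqref{eq:epsapprox_spaces} follow from part \emph{iv)} of \cref{theorem:triebels_embeddings}: the Triebel-to-Besov inclusion requires $\ha s - s > d/\ha p$, guaranteed by our choice of $\ha p$, while the Besov-to-Triebel inclusion requires $s - \ch s > -d/\ch p$, which is automatic since $s > \ch s$ and $\ch p < \infty$. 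The main obstacle will be precisely this coordination in the $p = \infty$ subcase, where the strict inequality in part \emph{iv)} must be matched against the bound on $d/\ha p$ imposed by \eqref{eq:epsapproximation}; all other cases collapse onto the matched-integration-index version of part \emph{i)}, which is essentially immediate.
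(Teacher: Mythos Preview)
Your proposal is correct and follows essentially the same approach as the paper: choose non-integer $\ha s, \ch s$ in the appropriate $\varepsilon$-windows around $s$, take $\ha p = \ch p = p$ when $p<\infty$, and for $p=\infty$ pick finite $\ha p, \ch p$ large enough that $d/\ha p, d/\ch p$ fall below $\varepsilon_2$ (the paper simply writes $\ha p=\ch p=2d/\varepsilon_2$). The only cosmetic difference is that in the Besov case with $p<\infty$ you invoke part~\emph{i)} of \cref{theorem:triebels_embeddings} via the identification \eqref{eq:triebel-besov-identity}, whereas the paper cites part~\emph{iv)} directly; both routes give the same embeddings.
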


\begin{proof}
	We choose 
	$\ha\ss \in (\ss+\varepsilon_2,\ss+\varepsilon_1)\setminus \N$ 
	and 
	$\ch\ss\in (\ss-\varepsilon_1,\ss-\varepsilon_2)\setminus \N$ which gives $\ha \ss > s > \ch \ss$.
	
	In the case  $\pp<\infty$ we define 
	$\ha \pp=\ch\pp=\pp$. 
	Then, \eqref{eq:epsapproximation} is fulfilled and \eqref{eq:triebelid-slobo}
	provides the identities
	$\slospace{\ha \ss}{\ha\pp}{\Om}=\triebelspace{\ha\ss}{\ha\pp}{\ha\pp}{\Om}$ and $\slospace{\ch\ss}{\ch\pp}{\Om}=\triebelspace{\ch\ss}{\ch\pp}{\ch\pp}{\Om}$. 
	Now, in the Triebel-Lizorkin case, part  \emph{i)}  of \cref{theorem:triebels_embeddings} yields
	\begin{align*}
		\triebelspace{\ha\ss}{\ha\pp}{\ha\pp}{\Om}
\hookrightarrow 
\triebelspace{\ss}{\pp}{q}{\Om} 
\hookrightarrow 
\triebelspace{\ch\ss}{\ch\pp}{\ch\pp}{\Om} \, .
	\end{align*}
	Analogously, in the Besov case, part \emph{iv)}  of \cref{theorem:triebels_embeddings} gives
	$\triebelspace{\ha\ss}{\ha\pp}{\ha\pp}{\Om} 
\hookrightarrow 
\besovspace{\ss}{\pp}{q}{\Om} 
\hookrightarrow 
\triebelspace{\ch\ss}{\ch\pp}{\ch\pp}{\Om}$.

In the case $p=\infty$ we have $\spaceone\ss\pp\qq\Om=\besovspace\ss\infty\qq\Om$.
Define $\ha\pp=\ch\pp\equalDef 2d/\varepsilon_2 \in [2,\infty)$. Then, \eqref{eq:epsapproximation} is fulfilled and \eqref{eq:epsapprox_spaces} follows 
from part \emph{iv)}  of \cref{theorem:triebels_embeddings}, namely
$\triebelspace{\ha\ss}{\ha\pp}{\ha\pp}{\Om} 
\hookrightarrow 
\besovspace{\ss}{\infty}{q}{\Om} 
\hookrightarrow 
\triebelspace{\ch\ss}{\ch\pp}{\ch\pp}{\Om}$.
\end{proof}

\begin{proof}[Proof of \cref{thm:two_factorisability_of_general_triebel_besov_embedds}]
	By \cref{theorem:triebels_embeddings} \emph{i)} and \emph{iv)}, the assumption $\ss-\tt >d(1/\pone -1/\ptwo)$ or, if both spaces are of Triebel-Lizorkin type the assumption $\ss-\tt\ge d(1/\pone -1/\ptwo)$, ensures that the embedding $\embe{\spaceone\ss\pone\qone\Om }{\spacetwo \tt\ptwo \qtwo \Om}$ exists.

	\ada i %
	The existence of embeddings $\embe{\spaceone \ss\pone\qone \Om }{\sobspace \uu 2\Om$} and $\embe{\sobspace\uu 2\Om}{\spacetwo\tt\ptwo\qtwo\Om}$ constituting the 2-factorization follows directly from \cref{theorem:triebels_embeddings} \emph{i)} and \emph{iv)}, using the identity $\sobspace{\uu}{2}{\Om}= \triebelspace \uu 22\Om$, see \cref{eq:triebelid-sob}.

	\ada {ii}
	Let the embedding $\embe{\spaceone \ss\pone \qone \Om}{\spacetwo \tt\ptwo \qtwo\Om}$ be 2-factorable as
	\begin{center}
		\begin{tikzcd}
			\spaceone\ss\pone\qone\Om  \arrow[rd,"U"] \arrow[hookrightarrow,rr, "\id"]&  & \spacetwo \tt\ptwo \qtwo \Om 
			\\
			 & H \arrow[ru,"V"] & 
		\end{tikzcd},
	\end{center}
	where $H$ is a Hilbert space and $U$ and $V$ are bounded linear operators.
	
Choose $\tt>\varepsilon>0$.
We apply \cref{lemma:epsapprox},
where we set $\varepsilon_1 \equalDef \varepsilon$ and $\varepsilon_2\equalDef \varepsilon/2$,
to find Slobodeckij spaces 
$\slospace{\ha \ss}{\ha \pone}\Om \hookrightarrow\spaceone \ss\pone\qone\Om\hookrightarrow \slospace {\ch \ss} {\ch \pone }\Om$ 
and 
$\slospace{\ha \tt}{\ha \ptwo}\Om \hookrightarrow \spacetwo\tt\ptwo\qtwo\Om\hookrightarrow \slospace {\ch \tt} {\ch \ptwo }\Om$ with
non-integer smoothness parameters $0<\ch \tt\le \tt <\ss \le\ha \ss$ and integration parameters $1\le \ha\pone,\ch\ptwo$ fulfilling
\begin{align}
	\label{eq:approximative_params}
	\varepsilon \ge \max\{\ha\ss-\ss, \tt-\ch\tt\}\ge \min\{\ha\ss-\ss, \tt-\ch\tt\} \ge \max\{\abs{d/\pone -d/\ha\pone}, \abs{d/\ptwo -d/\ch\ptwo} \}.
\end{align}
We obtain the embedding $\slospace{\ha \ss}{\ha \pone}\Om \hookrightarrow \slospace {\ch \tt} {\ch \ptwo }\Om $, 
which can be 2-factorized as
\begin{center}
	\begin{tikzcd}
		\slospace {\ha\ss}{\ha\pone}\Om\arrow[hookrightarrow,r, "\id"] \arrow[rightarrow,rrd, 
		"U\circ \id", swap
		]&
		\spaceone\ss\pone\qone\Om  \arrow[rd,"U"] \arrow[hookrightarrow,rr, "\id"]&  & \spacetwo \tt\ptwo \qtwo \Om \arrow[hookrightarrow,r, "\id"] 
		&\slospace{\ch\tt}{\ch\ptwo}\Om
		\\
		& & H \arrow[ru,"V"]\arrow[rru,
		"\id\circ V", swap
		] & &
	\end{tikzcd}.
\end{center}
Now, we estimate
\begin{align*}
	\ha\ss-\ch \tt -d(1/\ha\pone -1/\ch\ptwo) &= \ha\ss -\ss +\tt-\ch\tt + d/\pone -d/\ha\pone +d/\ch\ptwo - d/\ptwo  +\ss-\tt- d(1/\pone-1/\ptwo) \\
	&\ge \ha\ss -\ss +\tt-\ch\tt  - \abs{d/\pone -d/\ha\pone}-  \abs{d/\ptwo -d/\ch\ptwo} +\ss-\tt- d(1/\pone-1/\ptwo) \\
	&\ge \ss-\tt- d(1/\pone-1/\ptwo),
\end{align*}
using \eqref{eq:approximative_params} in the last step. The assumption $\ss-\tt\ge d(1/\pone -1/\ptwo)$ yields
\begin{align*}
	\ha \ss-\ch\tt \ge&\, d(1/\ha\pone -1/\ch\ptwo) \, ,
\end{align*}
and hence  \cref{lemma:slobotwofac}  shows $\ha\ss-\ch\tt \ge \pospart{d/\ha\pone-d/2}+\pospart{d/2-d/\ch\ptwo}$.
We now leverage from the general estimate $(a+b)_+ \leq a_+ + |b|$, which holds for all $a,b\in \R$, and the
inequalities in
\eqref{eq:approximative_params} to estimate
\begin{align*}
	&\ss-\tt -\cone-\ctwo\\
	 \ge&\, 
	 \ha\ss-\ch\tt -\pospart{d/\ha\pone -d/2} -\pospart{ d/2 -d/\ch\ptwo} - \abs{\ss-\ha\ss }-\abs{\ch\tt-\tt} -\abs{d/\pone-d/\ha\pone} -\abs{d/\ptwo -d/\ch\ptwo}  \\
	 \ge&\,\ha\ss-\ch\tt -\pospart{d/\ha\pone -d/2} -\pospart{ d/2 -d/\ch\ptwo} -4\varepsilon\\
	 \ge&\, -4\varepsilon \, .
\end{align*}
Since $\varepsilon>0$ is arbitrarily small, the claim follows.
\end{proof}

\begin{proof}[Proof of \cref{lemma:besov_c0_twofac}]
	We choose $\ha\ss\in (0,\ss-d/\pp)$. Then \eqref{eq:triebelid-holder} states $\besovspace {\ch\ss}\infty\infty\Om=\holspace {\ch\ss}\Om$ and we clearly have $\holspace {\ch\ss}\Om\hookrightarrow C^0(\Om)$.
	Parts \emph{i)} and \emph{iv)} of \cref{theorem:triebels_embeddings} yield the existence of the embedding
	\begin{align*}
		\spaceone\ss \pp\qq\Om \hookrightarrow \besovspace{\ha \ss}{\infty}{\infty}{\Om} \hookrightarrow C^0(\Om).
	\end{align*}

	\ada i 
	Let $\uu\in(d/2,\ss-\czero)$ and choose $\ha\ss\equalDef (\uu-d/2)/2>0$. 
	Then, by $\sobspace \uu  2\Om= \triebelspace{u}{2}{2}{\Om}$, see \eqref{eq:triebelid-sob},
	and 
	\cref{thm:two_factorisability_of_general_triebel_besov_embedds} we have
	\begin{align*}
		\spaceone\ss\pp\qq\Om\hookrightarrow \sobspace \uu 2\Om \hookrightarrow \besovspace{\ha\ss}{\infty}{\infty}{\Om}\hookrightarrow C^0(\Om)
	\end{align*}
	and the claim follows.

	\ada {ii}
	Let the embedding $\embe{\spaceone \ss\pp \qq \Om}{C^0(\Om)}$ be 2-factorable as
	\begin{center}
		\begin{tikzcd}
			\spaceone\ss\pp\qq\Om  \arrow[rd,"U"] \arrow[hookrightarrow,rr, "\id"]&  & C^0(\Om)
			\\
			& H \arrow[ru,"V"] & 
		\end{tikzcd},
	\end{center}
	where $H$ is a Hilbert space and $U$ and $V$ are bounded linear operators.
	Fix some $\varepsilon>0$. By \cref{lemma:epsapprox} we find
	non-integer $\ha\ss >\ss$ and some $ \ha\pp\ge 1 $ such that 
	\begin{align}
		\label{eq:another_param_ineq}
		\varepsilon > \ha\ss-\ss \ge \abs{d/\pp-d/\ha\pp}
	\end{align} holds and such that the embedding
$
		 \slospace {\ha\ss} {\ha\pp} \Om\hookrightarrow \spaceone \ss\pp\qq\Om 
$
	exists. 	
	Especially, the embedding $\slospace{\ha\ss}{\ha\pp}\Om\hookrightarrow C^0(\Om)$ is 2-factorable as
	\begin{center}
		\begin{tikzcd}
			\slospace {\ha\ss} {\ha\pp} \Om\arrow[hookrightarrow, r,"\id"] \arrow[rrd,swap,"U\circ\id"] &\spaceone\ss\pp\qq\Om  \arrow[rd,"U"] \arrow[hookrightarrow,rr, "\id"] &  & C^0(\Om)
			\\
			& & H \arrow[ru,"V"] & 
		\end{tikzcd}.
	\end{center}
    	By a translation and scaling argument, we can now assume $\ball 10\subset \Om$ without loss of generality.
	
	Let us fix a
	  $0<\d\le 1/2$.  We define $n\equalDef\paceuclid{\d}{\ball{1/2}{0}}$ and
	  choose $\fun_1,\dots,\fun_n$ as in \cref{lemma:slobo_rad_and_elltwo}. Since those functions are disjointedly supported translated copies of each other, we have
	  \begin{align*}
	  	\radnorm{(\fun_1,\dots,\fun_n)}{n}{C^0(\Om)}=&\,\E_{\radsequence{\varepsilon}} \norm{ \varepsilon_1 \fun_1 +\dots+\varepsilon_n\fun_n}_{C^0(\Om)}= \supnorm{\fun_1}=1.
	  \end{align*}
	  This observation and \cref{lemma:slobo_rad_and_elltwo}  give
	\begin{align*}
		 \typenorm{\embe{\slospace {\ha\ss} {\ha\pp} \Om }{C^0(\Om)}}
		 \ge 
		 \frac{ \radnorm{(\fun_1,\dots,\fun_n)}{n}{C^0(\Om)}} {\elltwonorm{(\fun_1,\dots,\fun_n)}{
		 	n}{\slospace {\ha\ss} {\ha\pp} \Om }}
	 	\ge 
	 	\ha C n^{-1/2} \d^{\ha \ss -d/\hat \pp }
	\end{align*}
	for constant $\ha C>0$ that is independent of $\d$ and $n$.
	Using the packing number bound \eqref{eq:cov-num-Rd} we thus find a constant $\bar C>0$ such that
	\begin{align*}
		\typenorm{\embe{\slospace {\ha\ss} {\ha\pp} \Om }{C^0(\Om)}}
		\ge
		\bar C \d^{\ha\ss -d(1/\ha\pp-1/2)}
	\end{align*}
	holds for all $\d\in(0,1/2]$.
	Now, \cref{lem:2fact-implies-twotype} implies $\ha\ss\ge \pospart{d/\ha\pp-d/2}$.
	Using a cotype 2 argument, we analogously obtain the requirement $\ha\ss\ge d/2$, where we use $	\elltwonorm{(\fun_1,\dots,\fun_n)}{n}{C^0(\Om)}=n^{1/2}$.
	
	Since $\varepsilon$ is arbitrarily small, we conclude
	$\ss\ge \czero$ and $\ss\ge d/2$ from \cref{eq:another_param_ineq}.
	It remains to show that $\ss\ge \cone+d/2$ holds. If $\czero=0$, this follows directly from our previous considerations. 
	If $\czero>0$, then we have
	\begin{align*}
		\czero +d/2 = d/\pp,
	\end{align*}
	and the claim holds by assumption.
\end{proof}

\paragraph*{Acknowledgements}
This work is funded by the Deutsche Forschungsgemeinschaft (DFG) in the project STE 1074/5-1, within the DFG priority programme SPP 2298 “Theoretical Foundations of Deep Learning”.
We would like to thank Jens Wirth and David Holzm\"uller for valuable discussions and insights that contributed to this work.

\bibliographystyle{plain}
\bibliography{steinwart-mine,steinwart-books,steinwart-article,steinwart-preprint,steinwart-proc}

\end{document}